\theoremstyle{plain}
\newtheorem{thm}{Theorem}[section]
\newtheorem{cor}[thm]{Corollary}
\newtheorem{lem}[thm]{Lemma}
\newtheorem{pro}[thm]{Proposition}
\theoremstyle{definition}
\newtheorem{ex}[thm]{Example}
\newtheorem{defn}[thm]{Definition}
\DeclareMathOperator{\Per}{Per} \DeclareMathOperator{\supp}{supp}
\DeclareMathOperator{\id}{id}
\DeclareMathOperator{\Ad}{Ad}
\DeclareMathOperator{\diag}{diag}
\DeclareMathOperator{\PIP}{PIP}
\author{Christian Svensson}
\address{Mathematical Institute, Leiden University,
P.O. Box 9512, 2300 RA Leiden, The Netherlands, and Centre for
Mathematical Sciences, Lund University, Box 118, SE-221 00 Lund,
Sweden} \email{chriss@math.leidenuniv.nl}
\author{Jun Tomiyama}
\address{Department of Mathematics, Tokyo Metropolitan University, Minami-Osawa, Hachioji City, Japan} \email{juntomi@med.email.ne.jp}
\title[On the commutant of $C(X)$ in $C^*$-crossed products by $\mathbb{Z}$]{On the commutant of $C(X)$ in $C^*$-crossed  products by $\mathbb{Z}$ and their representations}
\begin{document}
\begin{abstract}
For the $C^*$-crossed product $C^*(\Sigma)$ associated with an arbitrary topological dynamical system $\Sigma = (X, \sigma)$, we provide a detailed analysis of the commutant, in $C^* (\Sigma)$, of $C(X)$ and the commutant of the image of $C(X)$ under an arbitrary Hilbert space representation $\tilde{\pi}$ of $C^* (\Sigma)$. In particular, we give a concrete description of these commutants, and also determine their spectra. We show that, regardless of the system $\Sigma$, the commutant of $C(X)$ has non-zero intersection with every non-zero, not necessarily closed or self-adjoint, ideal of $C^* (\Sigma)$. We also show that the corresponding statement holds true for the commutant of $\tilde{\pi}(C(X))$ under the assumption that a certain family of pure states of $\tilde{\pi}(C^* (\Sigma))$ is total. Furthermore we establish that, if $C(X) \subsetneq C(X)'$, there exist both a $C^*$-subalgebra properly between $C(X)$ and $C(X)'$ which has the aforementioned intersection property, and such a $C^*$-subalgebra which does not have this property. We also discuss existence of a projection of norm one from $C^*(\Sigma)$ onto the commutant of $C(X)$.
\end{abstract}
\maketitle
\section{Introduction}
Let $\Sigma = (X, \sigma)$ be a topological dynamical system where $X$ is a compact Hausdorff space and $\sigma$ is a homeomorphism 
of $X$. We denote by $\alpha$ the automorphism of $C(X)$, the algebra of all continuous complex valued functions on $X$, induced by 
$\sigma$, namely $\alpha(f) = f \circ \sigma^{-1}$ for $f \in C(X)$. Denote by $C^* (\Sigma)$ the associated transformation group 
$C^*$-algebra, that is, the $C^*$-crossed product of $C(X)$ by $\mathbb{Z}$, where $\mathbb{Z}$ acts on $C(X)$ via iterations of $\alpha$. The interplay between topological dynamical systems and $C^*$-algebras has been intensively studied, see for example~\cite{STom4, T24, T34, T44, T64}. The following result constitutes the motivating background of this paper.

\begin{thm}\cite[Theorem 5.4]{T24}\label{triquivc4}
For a topological dynamical system $\Sigma$, the following statements are equivalent.
\begin{enumerate}
 \item $\Sigma$ is topologically free;
\item $I\cap C(X) \neq 0$ for every non-zero closed ideal $I$ of $C^*(\Sigma)$;
\item $C(X)$ is a maximal abelian $C^*$-subalgebra of $C^*(\Sigma)$.
\end{enumerate}
\end{thm}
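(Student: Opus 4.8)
The plan is to prove the two equivalences $(\mathrm{i})\Leftrightarrow(\mathrm{iii})$ and $(\mathrm{i})\Leftrightarrow(\mathrm{ii})$ with $(\mathrm{i})$ as the hub. First I set up the standard apparatus: write $\delta\in C^*(\Sigma)$ for the canonical unitary implementing $\sigma$, so that $\delta f\delta^{-1}=\alpha(f)$, and let $E\colon C^*(\Sigma)\to C(X)$ be the canonical faithful conditional expectation, with Fourier coefficients $a_n=E(a\delta^{-n})\in C(X)$. I shall use freely that $E$ is a $C(X)$-bimodule map, that an element vanishes as soon as all its Fourier coefficients vanish (via Fej\'er sums), and that an injective $*$-homomorphism between $C^*$-algebras is isometric.

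For $(\mathrm{iii})\Rightarrow(\mathrm{i})$ I argue contrapositively. If $\Sigma$ is not topologically free there are $n>0$ and a nonempty open $U\subseteq\inte\{x:\sigma^n x=x\}$; choosing $0\neq f\in C(X)$ with $\supp f\subseteq U$ and using that $\delta^n g\delta^{-n}=g\circ\sigma^{-n}$ equals $g$ on $U$, a direct computation shows $f\delta^n$ commutes with $C(X)$ although its $n$-th Fourier coefficient is nonzero, so $C(X)$ is not maximal abelian. Conversely, for $(\mathrm{i})\Rightarrow(\mathrm{iii})$, if $a$ commutes with $C(X)$ then comparing the $n$-th Fourier coefficients of $af$ and $fa$ yields $a_n(x)\,[f(\sigma^{-n}x)-f(x)]=0$ for all $f\in C(X)$; since points can be separated, $a_n$ vanishes on $\{x:\sigma^n x\neq x\}$, a set containing the dense collection of aperiodic points, so $a_n\equiv 0$ for $n\neq 0$ and hence $a=a_0\in C(X)$.

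For $(\mathrm{i})\Rightarrow(\mathrm{ii})$ let $I\neq 0$ be a closed ideal, pick $0\neq a\in I$, and put $b=a^*a\in I$, a positive element with $b_0=E(b)\neq 0$ by faithfulness; set $\lambda=\|b_0\|$ and fix a small $\epsilon$. Replacing $b$ by a Fej\'er sum $b'=\sum_{|n|\le N}(1-|n|/(N+1))\,b_n\delta^n$ I may assume $\|b-b'\|<\epsilon$ while the zeroth coefficient is still $b_0$. By density of the aperiodic points I choose an aperiodic $y_0$ with $b_0(y_0)>\lambda-\epsilon$ and an open $V\ni y_0$ with $\sigma^nV\cap V=\emptyset$ for $0<|n|\le N$; taking $h\in C(X)$ with $0\le h\le 1$, $\supp h\subseteq V$ and $h(y_0)=1$, the relations $h\,(h\circ\sigma^{-n})=0$ annihilate every off-diagonal term, so $hb'h=h^2b_0=:g\in C(X)$. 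Then $c:=hbh\in I$ satisfies $E(c)=g$ and $\|c-g\|<\epsilon$. If we had $I\cap C(X)=0$, the quotient map would be isometric on $C(X)$, giving $\|g\|=\|g+I\|\le\|g-c\|<\epsilon$, contradicting $\|g\|\ge g(y_0)>\lambda-\epsilon$ as soon as $\epsilon<\lambda/2$. Hence $I\cap C(X)\neq 0$.

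Finally $(\mathrm{ii})\Rightarrow(\mathrm{i})$ is again contrapositive. If $\Sigma$ is not topologically free, fix $p>0$, a nonempty open $U\subseteq\inte\{x:\sigma^p x=x\}$, and $0\neq h\in C(X)$ with $h\ge0$ and $\supp h\subseteq U$, and set $v=h\delta^p$; the element $v-v^*$ is nonzero, since its $p$-th Fourier coefficient is $h$, and lies off the diagonal. Using the orbit representations $\pi_{x,z}$ --- acting on $\ell^2$ of the finite orbit of a $\sigma$-periodic point $x$, with $C(X)$ diagonal and $\delta$ equal to $z$ times the cyclic shift --- I assemble $\pi=\bigoplus_{x\in D}\pi_{x,z_x}$ over a dense set $D\subseteq X$, choosing $z_x=1$ exactly when the orbit of $x$ meets $\supp h$ (which, since $\supp h\subseteq U$, forces $x$ to be $\sigma^p$-periodic). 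Because $\alpha^{-p}(h)=h\circ\sigma^p$ coincides with $h$ on every $\sigma^p$-fixed orbit, one checks that each summand kills $v-v^*$, so $v-v^*\in\ker\pi$, whereas $\pi$ is faithful on $C(X)$ as $D$ is dense; thus $\ker\pi$ is a nonzero closed ideal with $\ker\pi\cap C(X)=0$. I expect the main obstacle to be the cutting-down step in $(\mathrm{i})\Rightarrow(\mathrm{ii})$: one must simultaneously truncate the Fourier expansion, find an aperiodic point at which $b_0$ is nearly maximal, and shrink its neighbourhood enough to annihilate all surviving off-diagonal coefficients --- this is precisely where topological freeness enters quantitatively --- after which the isometry of the quotient on $C(X)$ converts the approximation into the desired nonzero element of $I\cap C(X)$.
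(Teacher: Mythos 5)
Your proposal is correct, but it takes a genuinely different route from the paper's own proof. The paper never reproves the theorem directly: it deduces a refined version of it (Theorem~\ref{triquivcny4}, in which (ii) is even strengthened to arbitrary, not necessarily closed or self-adjoint, ideals) from its analysis of the commutant --- (i)$\Leftrightarrow$(iii) follows from Proposition~\ref{commbesk4}(i) together with maximal commutativity of $C(X)'$ (Theorem~\ref{comcom4}); (iii)$\Rightarrow$(ii) is immediate from Corollary~\ref{intprocol4}, i.e.\ from the fact that $C(X)'$ always has the intersection property, whose proof runs through pure state extensions, Choi's multiplicative-domain theorem and the totality Lemma~\ref{total4}; and (ii)$\Rightarrow$(i) comes from the spectral criterion of Proposition~\ref{specint4}, by exhibiting a proper closed $\tilde{\sigma}$-invariant subset of $\Gamma$ that still restricts onto all of $X$. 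You instead give the classical, self-contained argument (essentially Tomiyama's original one): the same Fourier-coefficient computation for (i)$\Leftrightarrow$(iii) as in Proposition~\ref{commbesk4}(i); for (i)$\Rightarrow$(ii) the cut-down argument (Fej\'er truncation, an aperiodic point where $E(a^*a)$ is nearly maximal, a neighbourhood $V$ with $\sigma^n V\cap V=\emptyset$ for $0<|n|\le N$ killing all off-diagonal terms, then isometry of the quotient map on $C(X)$ if the intersection were trivial); and for (ii)$\Rightarrow$(i) an explicit direct sum of orbit representations with parameter $1$ on the orbits meeting $\supp h$, whose kernel is a nonzero closed ideal meeting $C(X)$ trivially. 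What each approach buys: yours needs none of the paper's machinery and proves the background theorem from scratch; the paper's is much shorter given its apparatus and yields more, namely the intersection property for arbitrary ideals (via Proposition~\ref{sammaskar4}) and the structural fact that it is $C(X)'$ that always detects ideals, with $C(X)$ doing so exactly when the two coincide. One point to tighten in your write-up: the direct sum $\bigoplus_{x\in D}\pi_{x,z_x}$ over a dense $D\subseteq X$ uses finite-orbit representations, which make sense only for periodic $x$; since $D$ must in general contain aperiodic points (periodic points need not be dense even when $\Sigma$ is not topologically free), for those you should use the $\ell^2(\mathbb{Z})$ orbit representation $\tilde{\pi}_x$ of the paper --- these also annihilate $v-v^*$, because aperiodic orbits cannot meet $\supp h\subseteq\Per^p(\sigma)$ --- after which your argument goes through verbatim.
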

Recall that a system $\Sigma = (X, \sigma)$ is called topologically free if the set of its non-periodic points is dense in $X$.
We say that $C(X)$ has the intersection property for closed ideals of $C^*(\Sigma)$ when (ii) is satisfied (cf. Definition~\ref{defintpro4}).

Many significant results concerning the interplay between $\Sigma$ and $C^*(\Sigma)$ have been obtained under the assumption that $\Sigma$ is topologically free.
As there are important examples of topological dynamical systems that are not topologically free, rational rotation of the unit circle being a typical one, our aim is to analyze the situation around Theorem~\ref{triquivc4} for arbitrary $\Sigma$.

We shall be concerned in detail with the commutant of $C(X)$, which we denote by $C(X)'$, and the commutant of the image of $C(X)$ 
under Hilbert space representations of $C^*(\Sigma)$. In a series of papers, \cite{SSD14, SSD24, SSD34}, improved analogues of 
Theorem~\ref{triquivc4} have been obtained in the context of an algebraic crossed product by the integers of, in particular, commutative Banach algebras $A$ more general than $C(X)$ by the integers, and especially $A'$ has been thoroughly investigated there. In that setup it is an elementary result that $A'$ is commutative (\cite[Proposition 2.1]{SSD14}) and thus a maximal commutative subalgebra of the corresponding crossed product, and that $A'$ has non-zero intersection with every non-zero ideal (\cite[Theorem 3.1]{SSD34}) even when $A$ is an arbitrary commutative complex algebra.
 
Here we give an explicit description of $C(X)' \subseteq C^*(\Sigma)$ and $\pi(C(X))' \subseteq \tilde{\pi}(C^*(\Sigma))$ where $\tilde{\pi} = \pi \times u$ is a Hilbert space representation of $C^*(\Sigma)$ (Proposition~\ref{commbesk4}).
Moreover, we prove that these algebras constitute commutative, hence maximal commutative, $C^*$-subalgebras that are invariant under $\Ad \delta$ (recall that, for $a \in C^* (\Sigma)$, $\Ad \delta (a) = \delta a \delta^{*}$)  and $\Ad \delta^*$ respectively under $\Ad u$ and $\Ad u^*$ (here $\delta$ denotes the canonical unitary element of $C^*(\Sigma)$ that implements the action of $\mathbb{Z}$ on $C(X)$ via $\alpha$, and $u = \tilde{\pi} (\delta)$), and determine the structure of their spectra (Theorem~\ref{comcom4}). Invariance under these automorphisms implies that their restrictions to $C(X)'$ and $\pi(C(X))'$, respectively, correspond to homeomorphisms of the spectra of these algebras. Certain aspects of the associated dynamical systems are investigated (Proposition \ref{unutv4}) and later used to prove Theorem~\ref{intpro4}: $\pi(C(X))'$ has the intersection property for ideals, not necessarily closed or self-adjoint, under the assumption that a certain family of pure states of $\tilde{\pi}(C^*(\Sigma))$ is total and, consequently, Corollary~\ref{intprocol4}, one of the main results of this paper: regardless of the system $\Sigma$, $C(X)'$ has the intersection property for ideals of $C^* (\Sigma)$. It is a consequence of Proposition~\ref{sammaskar4} that these algebras have the intersection property for arbitrary ideals rather than just for closed ones, which also sharpens our background result, Theorem~\ref{triquivc4}. In Section~\ref{intsub4} we investigate ideal intersection properties of so called intermediate subalgebras, meaning $C^*$-subalgebras $B$ of $C^*(\Sigma)$ such that $C(X) \subseteq B \subseteq C(X)'$. In Proposition~\ref{specint4} we give an abstract condition on such $B$, in terms of the relation of its spectrum to the spectrum of $C(X)'$, that it satisfies precisely when it has the intersection property for ideals. Using this, we provide in Theorem~\ref{triquivcny4} a rather short alternative proof of a refined version of our aforementioned background result. In Theorem~\ref{mellankompl4} we clarify completely the situation concerning $C(X)$ and its commutant, as well as the intersection property for ideals of these algebras and intermediate subalgebras: $C(X)'$ is the unique maximal abelian $C^*$-subalgebra of $C^* (\Sigma)$ containing $C(X)$, $C(X)' = C(X)$ precisely when $\Sigma$ is topologically free and as soon as $C(X)'$ is strictly larger than $C(X)$ there exist intermediate subalgebras $B_i$ with $C(X) \subsetneq B_i \subsetneq C(X)'$, for $i=1,2$, such that $B_1$ has the intersection property for ideals and $B_2$ does not. Finally, in Section~\ref{projn4} we discuss the existence of norm one projections from $C^*(\Sigma)$ onto $C(X)'$.
\section{Notation and preliminaries}\label{notprel4}
Throughout this paper we consider topological dynamical systems $\Sigma = (X, \sigma)$ where $X$ is a compact Hausdorff space and $\sigma : X \rightarrow X$ is a homeomorphism. Here $\mathbb{Z}$ acts on $X$ via iterations of $\sigma$, namely $x \stackrel{n}{\mapsto} \sigma^n (x)$ for $n \in \mathbb{Z}$ and $x \in X$. 
We denote by $\Per^{\infty}(\sigma)$ and
 $\Per(\sigma)$ the sets of aperiodic points and periodic points,
 respectively. If $\Per^{\infty} (\sigma) = X$, $\Sigma$ is called free and if $\Per^{\infty} (\sigma)$ is dense in $X$, $\Sigma$ is called topologically free.  Moreover,
for an integer $n$ we write 
$\Per^n(\sigma) = \Per^{-n}(\sigma) = \{ x \in X : \sigma^n(x) =
 x\}\), and $\Per_n(\sigma)$ for the set of all points belonging to $\Per^n (\sigma)$ but to no $\Per^k (\sigma)$ with $\vert k \vert$ non-zero and strictly less than $\vert n \vert$. When $n = 0$ we regard $\Per^0(\sigma) = X$.
We write $\Per(x) = k$ if $x \in \Per_k (\sigma)$. Note that if $\Per(y) = k$ and $y \in \Per^n (\sigma)$ then $k | n$.
For a subset $S \subseteq X$ we write its interior as $S^0$ and its closure as $\bar{S}$.
When a periodic point $y$ belongs to the interior of
 $\Per_k(\sigma)$ for some integer $k$ we call $y$ a periodic interior point. We denote the set
 of all such 
points by $\PIP(\sigma)$. Note that $\PIP(\sigma)$ does not coincide with $\Per(\sigma)^0$ in general, as the following example shows.
Let $X = [0,1]\times [-1,1]$ be endowed with the standard subspace topology from $\mathbb{R}^2$ and let $\sigma$ be the homeomorphism of $X$ defined as reflection in the $x$-axis. Then clearly $X = \Per(\sigma) = \Per(\sigma)^0 = \Per^2 (\sigma)$. Furthermore, $\Per_1 (\sigma) = [0,1] \times \{0\}$ which is a closed subset of $X$ with empty interior so $\PIP(\sigma) = \Per_2(\sigma)$. Hence $\Per_1 (\sigma) = \Per(\sigma) \setminus \PIP(\sigma)$. As we just saw that $\Per_2 (\sigma)$ is open and non-empty, this  example also shows that the sets $\Per_n (\sigma)$ are in general not closed, as opposed to the sets $\Per^n (\sigma)$ which are easily seen to always be closed. The following topological lemma will be a key result in what follows.
\begin{lem}\label{untat4} The union of $\Per^{\infty}(\sigma)$ and $\PIP(\sigma)$ is dense
 in $X$.
\end{lem}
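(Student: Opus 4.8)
The plan is to argue by contradiction, showing that the complement of $\Per^{\infty}(\sigma) \cup \PIP(\sigma)$, namely the set $\Per(\sigma) \setminus \PIP(\sigma)$ of periodic points that are not periodic interior points, has empty interior. So suppose some nonempty open set $U$ satisfies $U \subseteq \Per(\sigma) \setminus \PIP(\sigma)$; I aim to reach a contradiction. The first observation is that $\Per(\sigma) = \bigcup_{n \geq 1} \Per^{n}(\sigma)$ is a countable union of the closed sets $\Per^{n}(\sigma)$. Since $U$ is an open subset of the compact Hausdorff space $X$, it is locally compact Hausdorff and hence a Baire space; as $U = \bigcup_{n \geq 1}(U \cap \Per^{n}(\sigma))$ is then a countable union of relatively closed sets, the Baire category theorem yields some $N$ for which $U \cap \Per^{N}(\sigma)$ has nonempty interior. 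Consequently there is a nonempty open set $V \subseteq \Per^{N}(\sigma)$, and I may take $N$ to be the smallest positive integer admitting such a $V$.

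Next I would decompose $V$ according to exact periods. Every point of $V$ lies in $\Per^{N}(\sigma)$, so its exact period divides $N$; the points of exact period $N$ form the set $V \cap \Per_{N}(\sigma) = V \setminus \bigcup_{e \mid N,\, e < N} \Per^{e}(\sigma)$, which is open in $X$ because the set subtracted is a finite union of closed sets. If this open set is nonempty, each of its points lies in the interior of $\Per_{N}(\sigma)$ and is therefore a periodic interior point, contradicting $V \subseteq U \subseteq X \setminus \PIP(\sigma)$. Hence $V \cap \Per_{N}(\sigma) = \emptyset$, which forces $V \subseteq \bigcup_{e \mid N,\, e < N} \Per^{e}(\sigma)$.

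To finish, I would exploit this last inclusion together with the minimality of $N$. Writing $V = \bigcup_{e \mid N,\, e < N}(V \cap \Per^{e}(\sigma))$ as a finite union of sets closed in $V$, the elementary fact that a finite intersection of dense open subsets of $V$ is again dense (so that $V$ cannot be covered by finitely many relatively closed nowhere dense sets) produces a proper divisor $e < N$ for which $V \cap \Per^{e}(\sigma)$ contains a nonempty open set $V'$. But then $V' \subseteq \Per^{e}(\sigma)$ with $e < N$ contradicts the choice of $N$ as minimal. This contradiction shows no such $U$ exists, proving the density.

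I expect the main obstacle to be the handling of the sets $\Per_{n}(\sigma)$, which need not be closed (as the example preceding the lemma shows), so that one cannot apply Baire's theorem directly to the exact-period decomposition. The device that resolves this is to work instead with the closed sets $\Per^{n}(\sigma)$, to observe that the top stratum $\Per_{N}(\sigma) = \Per^{N}(\sigma) \setminus \bigcup_{e \mid N,\, e < N}\Per^{e}(\sigma)$ is relatively open in $\Per^{N}(\sigma)$, and to run a minimal-period descent so that the lower strata are pushed away by the minimality assumption rather than analyzed directly.
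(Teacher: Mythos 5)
Your proof is correct and follows essentially the same route as the paper's: apply the Baire category theorem to a nonempty open set avoiding $\Per^{\infty}(\sigma)\cup\PIP(\sigma)$, take a minimal period $N$ with $\Per^{N}(\sigma)$ having interior there, split off the proper divisors of $N$, and derive a contradiction either with the absence of $\PIP$ points or with minimality. The only differences are cosmetic --- you treat $N=1$ uniformly rather than separately and replace the paper's second Baire application by the elementary fact about finite intersections of dense open sets --- though you should state the minimality of $N$ precisely as ranging over those integers for which $U\cap\Per^{N}(\sigma)$ has nonempty interior, so that the witness $V$ lies inside $U$, which your $\PIP$ contradiction requires.
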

\begin{proof} Suppose the union were not dense in $X$, and let $Y$ be
 the complement of its closure. Then $Y$ is a non-empty open subset of $X$ and hence it is locally compact in the induced topology.
 Since
a locally compact space is a Baire space and $Y = \cup_{k=1}^{\infty} \Per^k (\sigma) \cap Y$, where the $\Per^k (\sigma) \cap Y$ are closed in $Y$, there exists a positive integer $n$ such that $\Per^n(\sigma) \cap Y$ has non-empty interior in $Y$ and hence in $X$ as $Y$ is an open subset of $X$. Take the minimal such integer and write
 it as $n$ again. If $n=1$ we arrive at a contradiction as $\Per^1 (\sigma) = \Per_1 (\sigma)$ and the above then implies that $\Per_1 (\sigma)$ has non-empty interior in $X$. Thus we assume that $n >1$. Let $k_1,k_2,\ldots,k_i$ be the  positive divisors of $n$ that are strictly smaller than $n$. 
Suppose that $\cup_{j=1}^i \Per^{k_j}(\sigma)\cap Y$ has non-empty interior, say $\emptyset \neq V \cap Y \subseteq \cup_{j=1}^i \Per^{k_j}(\sigma)\cap Y$ for some open subset $V$ of $X$, so that $V \cap Y = \cup_{j=1}^i \Per^{k_j}(\sigma)\cap V \cap Y$. Since $V \cap Y$, being open in $X$, is a locally compact Hausdorff space in the induced topology, hence a Baire space, there exists $j$ such that the closure (in $V \cap Y$) of $\Per^{k_j} (\sigma) \cap V \cap Y$ has non-empty interior in $V \cap Y$. However, since $\Per^{k_j} (\sigma) \cap V \cap Y$ is closed in $V \cap Y$, because $\Per^{k_j} (\sigma)$ is closed in $X$, and since $V \cap Y$ is open in $X$, we see that $\Per^{k_j} (\sigma) \cap V \cap Y$ itself has an interior point in $X$. Hence $\Per^{k_j} (\sigma)$ has an interior point in $Y$ and using the assumption on $n$ we arrive at a contradiction since $k_j < n$.
We conclude that $\cup_{j=1}^i \Per^{k_j}(\sigma)\cap Y$ has empty interior.
Denote by $U$ the interior of $\Per^n (\sigma) \cap Y$ in $Y$. Since $U$ is non-empty by assumption, it follows from the above that $U \setminus (\cup_{j=1}^i \Per^{k_j}(\sigma) \cap Y)$ is a non-empty open subset of $Y$ and hence of $X$.
But $U \setminus (\cup_{j=1}^i \Per^{k_j}(\sigma) \cap Y) \subseteq \Per^n(\sigma) \cap Y \setminus (\cup_{j=1}^i \Per^{k_j}(\sigma) \cap Y) = \Per_n (\sigma) \cap Y$.
We conclude that $\Per_n(\sigma)$ has an interior point in $Y$, which is a contradiction.
\end{proof}

We remark that when speaking of ideals we shall always mean two-sided ideals which are not necessarily closed or self-adjoint unless we state this explicitly.

With the automorphism $\alpha : C(X) \rightarrow C(X)$ defined by $\alpha(f) = f \circ \sigma^{-1}$ for $f \in C(X)$, we denote the $C^*$-crossed product $C(X) \rtimes_{\alpha} \mathbb{Z}$ by $C^* (\Sigma)$. For simplicity, we denote the natural isomorphic copy of $C(X)$ in $C^*(\Sigma)$ by $C(X)$ as well. We denote the canonical unitary element of $C^* (\Sigma)$ that implements the action of $\mathbb{Z}$ on $C(X)$ via $\alpha$ by $\delta$, recalling that $\alpha(f) = \Ad \delta (f)$ for $f \in C(X)$. By construction, $C^*(\Sigma)$ is generated as a $C^*$-algebra by $C(X)$ together with $\delta$. A generalized polynomial is a finite sum of the form $\sum_{n} f_n \delta^n$ with $f_n \in C(X)$, and we shall refer to the norm-dense $*$-subalgebra of $C^* (\Sigma)$ consisting of all generalized polynomials as the algebraic part of $C^* (\Sigma)$. 
We write the canonical faithful projection of norm one from $C^* (\Sigma)$ to $C(X)$ as $E$ and recall that $E$ is defined on the algebraic part of $C^* (\Sigma)$ as $E(\sum_n f_n \delta^n) = f_0$.
For an element $a \in C^*(\Sigma)$ and an integer $j$  we define $a(j) = E(a \delta^{-j})$, the $j$-th generalized Fourier coefficient of $a$. It is a fact that $a = 0$ if and only if $a(j) = 0$ for all integers $j$ and that $a$ is thus uniquely determined by its generalized Fourier coefficients (\cite[Theorem 1.3]{T24}). A Hilbert space representation of $C^* (\Sigma)$ is written as $\tilde{\pi} = \pi \times u$, where $\pi$ is the representation of $C(X)$ on the same Hilbert space given by restriction of $\tilde{\pi}$, and $u = \tilde{\pi}(\delta)$. The operations on $C^* (\Sigma)$ then imply that
\[\pi(\alpha(f)) = u \pi(f)u^* = \Ad u (\pi(f)), \textup{ for } f \in C(X).\]
We shall often make use
 of the
 dynamical system \(\Sigma_{\pi} = (X_{\pi},\sigma_{\pi})\) derived
 from a representation $\tilde{\pi}$. As explained in \cite[p.26]{T24}, we
 define this dynamical 
system as
\[
 X_{\pi} = h(\pi^{-1}(0)) \quad \mbox{and\quad\( \sigma_{\pi} = 
\sigma_{\upharpoonright X_{\pi}}\)},
\]
where $h(\pi^{-1}(0))$ means the standard hull of the kernel ideal of
 $\pi$ in $C(X)$; $X_{\pi}$ is obviously a closed subset of $X$ that is invariant under $\sigma$ and its inverse. The system
 $\Sigma_{\pi}$ is topologically conjugate to the dynamical system
 \(\Sigma'_{\pi} = (X'_{\pi},\sigma_{\pi}')\) where $X_{\pi}'$ is the
 spectrum of 
$\pi(C(X))$ and the map $\sigma_{\pi}'$ is the homeomorphism of
 $X_{\pi}'$ induced by the automorphism $\Ad u$ on $\pi(C(X))$. To see this, note that the homeomorphism $\theta : X_{\pi} \rightarrow X_{\pi}'$ induced by the isomorphism $\pi(f) \mapsto f_{\upharpoonright X_{\pi}}$ between $\pi(C(X))$ and $C(X_{\pi})$ is such that $\sigma_{\pi}' \circ \theta = \theta \circ \sigma_{\pi}$.  Thus we
 identify
 these two dynamical systems. Note that under this identification, $\pi(f)$ corresponds to the restriction 
of the function $f$ to the set $X_{\pi}$. We denote the canonical unitary element of $C^* (\Sigma_{\pi})$ by $\delta_{\pi}$. We now recall three results that will be important to us throughout this paper.
\begin{pro}\label{irrfri4}\cite[Proposition 3.4.]{T64}
If $\tilde{\pi} = \pi \times u$ is an infinite-dimensional irreducible representation of $C^* (\Sigma)$, then the dynamical system $\Sigma_{\pi}$ is topologically free.
\end{pro}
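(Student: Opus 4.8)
The plan is to argue by contraposition: assuming $\Sigma_\pi$ is \emph{not} topologically free, I would produce a non-zero central element of $C^*(\Sigma_\pi)$ whose image under $\tilde\pi$ must, by Schur's lemma, be a scalar, and I would arrange this element so that the scalar can be neither zero nor non-zero. Two reductions come first. Since $X_\pi = h(\pi^{-1}(0))$ is $\sigma$-invariant and $\pi(C(X)) \cong C(X_\pi)$, the covariant pair $(\pi,u)$ factors as $\tilde\pi = \tilde\pi_\pi \circ q$, where $q : C^*(\Sigma) \to C^*(\Sigma_\pi)$ is the canonical surjection induced by restriction of functions and $\tilde\pi_\pi$ is a representation of $C^*(\Sigma_\pi)$; as $q$ is surjective, $\tilde\pi_\pi(C^*(\Sigma_\pi)) = \tilde\pi(C^*(\Sigma))$ acts irreducibly, so $\tilde\pi_\pi$ is irreducible. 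Secondly, $X_\pi$ must be infinite: were it finite, $\sigma_\pi$ would be a permutation whose cycles identify $C^*(\Sigma_\pi)$ with a finite direct sum of algebras $M_\ell(C(\mathbb{T}))$, all of whose irreducible representations are finite-dimensional, contradicting $\dim \tilde\pi = \infty$.

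Next I would locate a good periodic region by applying Lemma~\ref{untat4} to the subsystem $\Sigma_\pi$. If $\Sigma_\pi$ is not topologically free, then $\Per^{\infty}(\sigma_\pi)$ is not dense, so there is a non-empty open $O \subseteq X_\pi$ with $O \cap \Per^{\infty}(\sigma_\pi) = \emptyset$. Since $\Per^{\infty}(\sigma_\pi) \cup \PIP(\sigma_\pi)$ is dense in $X_\pi$ by Lemma~\ref{untat4}, the set $\PIP(\sigma_\pi)$ must meet $O$, yielding a non-empty open $W_0 \subseteq O$ with $W_0 \subseteq \Per_n(\sigma_\pi)$ for some $n \geq 1$, so that $\sigma_\pi^n = \id$ on $W_0$. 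Fix $y \in W_0$; since $X_\pi$ is infinite its orbit $\{y,\sigma y,\ldots,\sigma^{n-1}y\}$ is a proper finite subset, so I may choose $x^* \in X_\pi$ outside this orbit and, using that $X_\pi$ is Hausdorff, shrink $W_0$ to an open $W \ni y$ disjoint from the finite set $\{x^*,\sigma^{-1}x^*,\ldots,\sigma^{-(n-1)}x^*\}$. Putting $\tilde W = \bigcup_{j=0}^{n-1}\sigma^j W$, this disjointness forces $x^* \notin \tilde W$, so $\tilde W$ is a \emph{proper}, open, $\sigma$-invariant subset of $X_\pi$ on which $\sigma^n = \id$.

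With this in hand I would take $h_0 \in C(X_\pi)$ with $h_0 \geq 0$, $h_0(y) > 0$, and $\supp h_0 \subseteq W$, and form the orbit sum $g = \sum_{j=0}^{n-1} \alpha^j(h_0)$, which is non-zero, supported in $\tilde W$, and $\sigma$-invariant, i.e. $\alpha(g) = g$. A direct computation, using that $\alpha^n(h) = h$ on $\tilde W \supseteq \supp g$ for every $h \in C(X_\pi)$, shows that $c := g\,\delta_\pi^{\,n}$ commutes with $C(X_\pi)$ and with $\delta_\pi$, hence is central in $C^*(\Sigma_\pi)$; it is non-zero since its $n$-th generalized Fourier coefficient is $g \neq 0$. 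By Schur's lemma $\tilde\pi_\pi(c) = \pi(g)u^n = \lambda I$ for some $\lambda \in \mathbb{C}$. If $\lambda = 0$, then $\pi(g) = 0$, forcing $g$ to vanish on $X_\pi$, a contradiction. If $\lambda \neq 0$, then $\pi(g) = \lambda u^{-n}$ is invertible, so by spectral permanence and $\pi(C(X)) \cong C(X_\pi)$ the function $g$ is nowhere zero on $X_\pi$; but $g$ vanishes on the non-empty set $X_\pi \setminus \tilde W$, again a contradiction. This rules out the failure of topological freeness.

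The main obstacle is the second paragraph: extracting an open periodic set via the Baire-category mechanism of Lemma~\ref{untat4}, and then shrinking it so that its $\sigma$-saturation $\tilde W$ still satisfies $\sigma^n = \id$ while remaining a proper subset of $X_\pi$. The properness is exactly what makes the final dichotomy on $\lambda$ bite, and it is secured by the infiniteness of $X_\pi$ — the single point at which the infinite-dimensionality hypothesis is used.
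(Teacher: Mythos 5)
Your proof is correct, and there is nothing in the paper itself to compare it against: Proposition~\ref{irrfri4} is quoted from \cite{T64} without proof, so the only question is whether your blind argument stands on its own, and it does. Your two preliminary reductions are legitimate and are in fact available inside the paper: the factorization $\tilde{\pi} = \hat{\pi}\circ\tilde{\rho}$ through $C^*(\Sigma_{\pi})$ is exactly the one the paper records after Proposition~\ref{cesar4}, and the exclusion of finite $X_{\pi}$ is Proposition~\ref{enbana4} combined with Lemma~\ref{dirsum4}. The heart of the argument is sound: your set $\tilde{W}=\bigcup_{j=0}^{n-1}\sigma^{j}W$ is open, invariant under $\sigma_\pi$ and its inverse (since $\sigma_\pi^{n}W=W$), satisfies $\sigma_\pi^{n}=\id$ on it, and is proper because $W$ was shrunk to avoid the backward translates of $x^{*}$ --- and this properness, secured by infiniteness of $X_{\pi}$, is indeed the only place the infinite-dimensionality hypothesis enters. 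The element $g\delta_{\pi}^{n}$ is then central (your support computation is right) and non-zero, Schur's lemma forces $\tilde{\pi}_{\pi}(g\delta_{\pi}^{n})=\lambda I$, and both horns close correctly: $\lambda=0$ contradicts the fact that $\hat{\pi}$ restricts to an isomorphism of $C(X_{\pi})$ onto $\pi(C(X))$, while $\lambda\neq 0$ makes $\pi(g)$ invertible, hence $g$ invertible in $C(X_{\pi})$ by inverse-closedness of unital $C^*$-subalgebras, contradicting $g=0$ off $\tilde{W}$. One small economy: you do not need exact periods, hence not the full strength of Lemma~\ref{untat4}; Lemma~\ref{bairegrej4} already yields a non-empty open $U\subseteq \Per^{n}(\sigma_{\pi})$, and your construction only ever uses $\sigma_{\pi}^{n}=\id$ on the chosen open set, which holds on $U$ as well.
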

\begin{thm}\label{topfriprojn4}\cite[Theorem 5.1.]{T24}
Let $\tilde{\pi} = \pi \times u$ be a representation of $C^* (\Sigma)$ on a Hilbert space $H$. If the induced dynamical system $\Sigma_{\pi}$ is topologically free, then there exists a projection $\epsilon_{\pi}$ of norm one  from the $C^*$-algebra $\tilde{\pi} (C^* (\Sigma))$ to $\pi(C(X))$ such that the following diagram commutes.
\begin{displaymath}
\xymatrix{
C^*(\Sigma) \ar[r]^{\tilde{\pi}} \ar[d]_{E} &
\tilde{\pi}(C^*(\Sigma)) \ar[d]^{\epsilon_{\pi}}\\
C(X) \ar[r]_{\pi}  & \pi(C(X))}
\end{displaymath}
\end{thm}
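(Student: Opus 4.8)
The plan is to define $\epsilon_{\pi}$ on the image under $\tilde{\pi}$ of the algebraic part of $C^*(\Sigma)$, which is norm-dense in $\tilde{\pi}(C^*(\Sigma))$, by the only formula that can render the diagram commutative, namely $\epsilon_{\pi}(\tilde{\pi}(a)) = \pi(E(a))$, and then to extend it by continuity to all of $\tilde{\pi}(C^*(\Sigma))$. For this to make sense, two things must be verified: that the formula is well defined (independent of the generalized polynomial representing a given element of $\tilde{\pi}(C^*(\Sigma))$) and that the resulting map is contractive. Both follow at once from the single norm estimate
\[ \| \pi(E(a)) \| \le \| \tilde{\pi}(a) \| \]
for every $a$ in the algebraic part, since $E$ is linear and this inequality forces $\pi(E(a)) = 0$ whenever $\tilde{\pi}(a) = 0$. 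Establishing this estimate is the heart of the argument and the only place where topological freeness of $\Sigma_{\pi}$ enters.

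To prove the estimate, write $a = \sum_{n=-N}^{N} f_n \delta^n$, so that $E(a) = f_0$, and recall that under the identification $\pi(C(X)) = C(X_{\pi})$ the element $\pi(f_0)$ is the restriction of $f_0$ to $X_{\pi}$. Pick $x_0 \in X_{\pi}$ at which $|f_0|$ attains its maximum on $X_{\pi}$, so that $|f_0(x_0)| = \| \pi(f_0) \|$. Since $\Sigma_{\pi}$ is topologically free, the aperiodic points of $\sigma_{\pi}$ are dense in $X_{\pi}$, so for any $\varepsilon > 0$ there is an aperiodic point $x$ with $|f_0(x)| > \| \pi(f_0) \| - \varepsilon$. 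As $x$ is aperiodic, the $2N+1$ points $\sigma^{-N}(x), \ldots, \sigma^{N}(x)$ are distinct, and by the Hausdorff property together with continuity of $\sigma$ one may choose a neighbourhood $U$ of $x$ in $X_{\pi}$ so small that $U \cap \sigma^n(U) = \emptyset$ for $1 \le |n| \le N$. Using Urysohn's lemma, choose $g \in C(X)$ with $0 \le \pi(g) \le 1$, $g(x) = 1$ and $\supp(\pi(g)) \subseteq U$.

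The point of this choice is that it annihilates all off-diagonal terms. Indeed, since $u^n \pi(g) = \pi(g \circ \sigma^{-n}) u^n$, one computes $\pi(g) \pi(f_n) u^n \pi(g) = \pi\big( g\, f_n\, (g \circ \sigma^{-n}) \big) u^n$, and the function $g\,(g \circ \sigma^{-n})$ vanishes on $X_{\pi}$ for $1 \le |n| \le N$, because $\supp(\pi(g))$ and $\sigma^n(\supp(\pi(g)))$ are disjoint there. Hence $\pi(g) \tilde{\pi}(a) \pi(g) = \pi(g) \pi(f_0) \pi(g) = \pi(g^2 f_0)$, and therefore
\[ |f_0(x)| = |g(x)|^2 |f_0(x)| \le \| \pi(g^2 f_0) \| = \| \pi(g) \tilde{\pi}(a) \pi(g) \| \le \| \pi(g) \|^2 \, \| \tilde{\pi}(a) \| \le \| \tilde{\pi}(a) \|. \]
Letting $\varepsilon \to 0$, that is, choosing aperiodic points $x$ with $|f_0(x)| \to \| \pi(f_0) \|$, yields $\| \pi(E(a)) \| = \| \pi(f_0) \| \le \| \tilde{\pi}(a) \|$, as desired.

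With the estimate in hand, $\epsilon_{\pi}$ extends uniquely to a contractive linear map $\tilde{\pi}(C^*(\Sigma)) \to \pi(C(X))$ satisfying $\epsilon_{\pi} \circ \tilde{\pi} = \pi \circ E$ by continuity, which is precisely commutativity of the diagram. Finally, $\epsilon_{\pi}$ restricts to the identity on $\pi(C(X))$, because $E$ is the identity on $C(X)$, so it is an idempotent of norm one onto $\pi(C(X))$; invoking Tomiyama's theorem on contractive projections onto $C^*$-subalgebras then shows it is in fact a conditional expectation. The main obstacle is the localization estimate of the second and third paragraphs: everything hinges on producing, near a near-maximizing aperiodic point, a positive contraction whose support has pairwise disjoint iterates over the relevant range, and it is exactly topological freeness of $\Sigma_{\pi}$ that guarantees such a point and hence such a cutoff.
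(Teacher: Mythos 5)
Your proof is correct, and it cannot be compared against an argument in the paper because the paper gives none: this statement is imported verbatim from \cite[Theorem 5.1]{T24} as background. Your route --- proving $\|\pi(E(a))\| \le \|\tilde{\pi}(a)\|$ for generalized polynomials by evaluating near an aperiodic point of $\sigma_{\pi}$ and cutting off with a function whose support in $X_{\pi}$ has disjoint iterates, so that conjugation by $\pi(g)$ kills every off-diagonal term, then extending by continuity --- is exactly the standard proof of the cited theorem, and the delicate points (working inside the $\sigma$-invariant set $X_{\pi}$, defining $g$ via its restriction $\pi(g)$, and deducing well-definedness of $\epsilon_{\pi}$ from the same estimate) are all handled correctly.
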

\begin{cor}\label{bildkp4}\cite[Corollary 5.1.A.]{T24}
Suppose the situation is as in Theorem~\ref{topfriprojn4}. Then the map defined by $\pi(f) \mapsto f_{\upharpoonright X_{\pi}}$ for $f \in C(X)$ and $u \mapsto \delta_{\pi}$, extends to an isomorphism between $\tilde{\pi} (C^* (\Sigma))$ and $C^* (\Sigma_{\pi})$.
\end{cor}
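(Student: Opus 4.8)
The plan is to produce the asserted isomorphism as the inverse of a homomorphism $\Phi : C^*(\Sigma_{\pi}) \to \tilde{\pi}(C^*(\Sigma))$ built from the universal property of the crossed product, and then to prove that $\Phi$ is injective by comparing the canonical conditional expectation on $C^*(\Sigma_{\pi})$ with the projection of norm one furnished by Theorem~\ref{topfriprojn4}. First I would record that the restriction map $r : C(X) \to C(X_{\pi})$, $f \mapsto f_{\upharpoonright X_{\pi}}$, is a surjective $*$-homomorphism whose kernel is precisely $\{f : f_{\upharpoonright X_{\pi}} = 0\}$, and that, because $X_{\pi} = h(\pi^{-1}(0))$, this kernel coincides with $\ker(\pi_{\upharpoonright C(X)})$; hence $r$ and $\pi$ induce the same isomorphism $C(X_{\pi}) \cong \pi(C(X))$, $f_{\upharpoonright X_{\pi}} \leftrightarrow \pi(f)$. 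Since $X_{\pi}$ is invariant under $\sigma$ and $\sigma^{-1}$, the map $r$ is equivariant, so $(f_{\upharpoonright X_{\pi}} \mapsto \pi(f),\ \delta_{\pi} \mapsto u)$ is a covariant pair: one checks that $u\,\pi(f)\,u^* = \pi(\alpha(f))$ corresponds under the identification to $(f_{\upharpoonright X_{\pi}}) \circ \sigma_{\pi}^{-1}$, matching the covariance relation defining $C^*(\Sigma_{\pi})$. As $\mathbb{Z}$ is amenable, so that full and reduced crossed products agree, the universal property yields a $*$-homomorphism $\Phi : C^*(\Sigma_{\pi}) \to \tilde{\pi}(C^*(\Sigma))$ with $\Phi(f_{\upharpoonright X_{\pi}}) = \pi(f)$ and $\Phi(\delta_{\pi}) = u$. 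Its image contains $\pi(C(X))$ and $u$, which generate $\tilde{\pi}(C^*(\Sigma))$, so $\Phi$ is surjective.

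The main work is injectivity, and this is exactly where topological freeness of $\Sigma_{\pi}$ enters. Let $E_{\pi}$ denote the canonical faithful projection of norm one from $C^*(\Sigma_{\pi})$ onto $C(X_{\pi})$, and let $\epsilon_{\pi}$ be the projection of norm one from $\tilde{\pi}(C^*(\Sigma))$ onto $\pi(C(X))$ provided by Theorem~\ref{topfriprojn4}. I would first verify the intertwining relation $\epsilon_{\pi} \circ \Phi = \Phi \circ E_{\pi}$. It suffices to check this on generalized polynomials $g\,\delta_{\pi}^n$ with $g = f_{\upharpoonright X_{\pi}}$: for $n \neq 0$ both sides vanish, since $E_{\pi}(g\,\delta_{\pi}^n) = 0$ while $\Phi(g\,\delta_{\pi}^n) = \pi(f)u^n = \tilde{\pi}(f\delta^n)$ and the commuting diagram of Theorem~\ref{topfriprojn4} gives $\epsilon_{\pi}(\tilde{\pi}(f\delta^n)) = \pi(E(f\delta^n)) = 0$; for $n = 0$ both sides equal $\pi(f)$. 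The relation then follows by linearity and continuity. With this in hand, suppose $\Phi(a) = 0$. Then $\Phi(E_{\pi}(a^*a)) = \epsilon_{\pi}(\Phi(a^*a)) = 0$, and since $\Phi$ restricted to $C(X_{\pi})$ is the isomorphism onto $\pi(C(X))$ recorded above, $E_{\pi}(a^*a) = 0$; faithfulness of $E_{\pi}$ then forces $a^*a = 0$, hence $a = 0$. Thus $\Phi$ is an isomorphism, and $\Psi := \Phi^{-1}$ is the asserted isomorphism sending $\pi(f) \mapsto f_{\upharpoonright X_{\pi}}$ and $u \mapsto \delta_{\pi}$.

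I expect the only genuine obstacle to be the injectivity step; the construction of $\Phi$ and its surjectivity are essentially formal once the covariance relations are matched. The crux is that an arbitrary quotient of $C^*(\Sigma)$ need not be a crossed product at all, and it is precisely the hypothesis of topological freeness of $\Sigma_{\pi}$ --- through the norm-one projection of Theorem~\ref{topfriprojn4} and its compatibility with $E$ --- that rules out $\tilde{\pi}(C^*(\Sigma))$ being a proper quotient of $C^*(\Sigma_{\pi})$. A secondary point requiring care is the identity $\ker(\pi_{\upharpoonright C(X)}) = \{f : f_{\upharpoonright X_{\pi}} = 0\}$, i.e. that the hull--kernel correspondence identifies the spectrum of $\pi(C(X))$ with $X_{\pi}$; this is the content of the identification already made in the preliminaries and is what guarantees that $\Phi_{\upharpoonright C(X_{\pi})}$ is injective.
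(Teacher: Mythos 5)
Your proof is correct, and it is essentially the expected argument: the paper itself offers no proof of this statement (it is quoted from \cite[Corollary 5.1.A.]{T24}), but the map $\Phi$ you build from the universal property of the crossed product is precisely the homomorphism the paper later calls $\hat{\pi}$ in the factorization $\tilde{\pi} = \hat{\pi} \circ \tilde{\rho}$, and your injectivity step --- intertwining the norm-one projection $\epsilon_{\pi}$ of Theorem~\ref{topfriprojn4} with the canonical faithful expectation $E_{\pi}$ of $C^*(\Sigma_{\pi})$ on the dense subalgebra of generalized polynomials --- is the standard way topological freeness of $\Sigma_{\pi}$ is brought to bear. The two points you flag as delicate (the hull--kernel identification $\ker(\pi_{\upharpoonright C(X)}) = \{f : f_{\upharpoonright X_{\pi}} = 0\}$, which makes $\Phi_{\upharpoonright C(X_{\pi})}$ injective, and the equivariance of the restriction map) are exactly the right ones, and both are handled correctly.
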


For $x \in X$ we denote by $\mu_x$ the functional on $C(X)$ that acts as point evaluation in $x$. Since the pure state extensions to $C^* (\Sigma)$ of the point evaluations on $C(X)$ will play a prominent role in this paper, we shall now recall some basic facts about them, without proofs. For further details and proofs, we refer to~\cite[\S 4]{T24}. For $x \in \Per^{\infty} (\sigma)$ there is a unique pure state extension of $\mu_x$, denoted by $\varphi_x$, given by $\varphi_x = \mu_x \circ E$. The set of pure state extensions of $\mu_y$ for $y \in \Per(\sigma)$ is parametrized by the unit circle $\{\varphi_{y,t} : t \in \mathbb{T}\}$. We write the GNS-representations associated with the pure state extensions above as $\tilde{\pi}_x$ and $\tilde{\pi}_{y,t}$. For $x \in \Per^{\infty}(\sigma)$, $\tilde{\pi}_x$ is the representation of $C^* (\Sigma)$ on $\ell^2$, whose standard basis we denote by $\{e_i\}_{i \in \mathbb{Z}}$, defined on the generators as follows. For $f \in C(X)$ and $i \in \mathbb{Z}$ we have $\tilde{\pi}_{x} (f) e_i = f \circ \sigma^i (x) \cdot e_i$, and $\tilde{\pi}_{x} (\delta) e_i = e_{i+1}$. For $y \in \Per(\sigma)$ with $\Per(y) = p$ and $t \in \mathbb{T}$, $\tilde{\pi}_{y,t}$ is the representation on $\mathbb{C}^p$, whose standard basis we denote by $\{e_i\}_{i=0}^{p-1}$, defined as follows. For $f \in C(X)$ and $i \in \{0,1,\ldots, p-1\}$ we set $\tilde{\pi}_{y,t} (f) e_i = f \circ \sigma^i (y) \cdot e_i$. For $j \in \{0, 1, \ldots, p-2\}$, $\tilde{\pi}_{y,t} (\delta) e_j = e_{j+1}$ and $\tilde{\pi}_{y,t}(\delta) e_{p-1} = t \cdot e_0$. We also mention that the unitary equivalence class of $\tilde{\pi}_x$ is determined by the orbit of $x$, and that of $\tilde{\pi}_{y,t}$ by the orbit of $y$ and the parameter $t$.

In what follows, we shall sometimes make use of the following important result (\cite[Proposition 2]{T44}).
\begin{pro}\label{idbesk4}
For every closed ideal $I \subseteq C^* (\Sigma)$ there exist families $\{x_{\alpha}\}$ of aperiodic points and $\{y_{\beta}, t_{\gamma}\}$ of periodic points and parameters from the unit circle, such that $I$ is the intersection of the associated kernels $\ker(\tilde{\pi}_{x_{\alpha}})$ and $\ker(\tilde{\pi}_{y_{\beta}, t_{\gamma}})$.
\end{pro}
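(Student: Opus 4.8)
The plan is to combine a soft $C^*$-algebraic reduction with the dynamical description of the irreducible representations of $C^*(\Sigma)$ furnished by Proposition~\ref{irrfri4} and Corollary~\ref{bildkp4}. For the reduction, recall that for any closed ideal $I$ the quotient $C^*(\Sigma)/I$ is a $C^*$-algebra, hence has trivial Jacobson radical, so that $I$ is the intersection of the primitive ideals containing it. Writing each such primitive ideal as $\ker \rho$ for an irreducible representation $\rho$, it therefore suffices to prove that \emph{every} primitive ideal of $C^*(\Sigma)$ is itself an intersection of kernels of the point representations $\tilde{\pi}_x$ and $\tilde{\pi}_{y,t}$. Granting this, one takes $\{x_\alpha\}$ and $\{y_\beta,t_\gamma\}$ to consist of exactly those aperiodic points, periodic points and parameters for which the associated kernel contains $I$; since each primitive ideal above $I$ is then an intersection of such kernels, so is $I$ itself.

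To handle an arbitrary irreducible $\rho = \pi \times u$ I would use the induced system $\Sigma_\pi = (X_\pi, \sigma_\pi)$, where $X_\pi$ is the closed invariant spectrum of $\pi(C(X))$. If $\rho$ is finite-dimensional then $X_\pi$ is finite and invariant, hence a union of periodic orbits, and irreducibility forces it to be a single orbit, since otherwise a central projection of $\pi(C(X))$ would reduce $\rho$; diagonalising $\pi$ along the orbit and reading off the phase by which $u$ implements the cyclic shift then shows $\rho \cong \tilde{\pi}_{y,t}$ for a periodic $y \in X_\pi$ and some $t \in \mathbb{T}$, so that $\ker \rho = \ker \tilde{\pi}_{y,t}$. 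If $\rho$ is infinite-dimensional, Proposition~\ref{irrfri4} shows that $\Sigma_\pi$ is topologically free, and Corollary~\ref{bildkp4} identifies $\rho(C^*(\Sigma))$ with $C^*(\Sigma_\pi)$ via $\pi(f) \mapsto f_{\upharpoonright X_\pi}$ and $u \mapsto \delta_\pi$. Since an element of $C^*(\Sigma_\pi)$ vanishes precisely when all of its generalized Fourier coefficients vanish, this identification gives
\[ \ker \rho = \{\, a \in C^*(\Sigma) : a(k)_{\upharpoonright X_\pi} = 0 \textup{ for all } k \in \mathbb{Z}\,\}, \]
where $a(k) = E(a\delta^{-k})$.

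It remains to exhibit this kernel as an intersection of kernels of aperiodic point representations, and this is where the real work lies. From the defining formulas for $\tilde{\pi}_x$ one computes the matrix coefficients $\langle \tilde{\pi}_x(a) e_0, e_k \rangle = a(k)(\sigma^k x)$, whence, using the shift-equivariance of the basis, $a \in \ker \tilde{\pi}_x$ if and only if every $a(k)$ vanishes on the orbit closure $\overline{\{\sigma^n x : n \in \mathbb{Z}\}}$. Intersecting over the aperiodic points $x$ of $X_\pi$ thus yields the ideal of those $a$ whose Fourier coefficients vanish on the closure of the union of their orbits, and this coincides with $\ker \rho$ exactly when that union is dense in $X_\pi$. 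The main obstacle is therefore twofold: first, a single point will in general not do, since $\Sigma_\pi$ need not be minimal, so one is genuinely forced to use a whole family of aperiodic points; and second, one must know that enough aperiodic points are available in $X_\pi$, which is precisely the topological freeness of $\Sigma_\pi$ supplied by Proposition~\ref{irrfri4}. Granting the density of $\Per^{\infty}(\sigma_\pi)$ in $X_\pi$, the union of the orbits of all aperiodic points of $X_\pi$ is dense, the intersection of the corresponding kernels collapses to $\ker \rho$, and the proposition follows. The finite-dimensional case and the opening $C^*$-algebraic reduction are routine by comparison; the content is concentrated in this infinite-dimensional matching, which rests on Proposition~\ref{irrfri4} and Corollary~\ref{bildkp4}.
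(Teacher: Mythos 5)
Your proof is correct, but note that the paper contains no proof of this proposition to compare against: it is imported as a known result, cited as \cite[Proposition 2]{T44}. Your argument --- reducing to primitive ideals via the fact that a $C^*$-quotient has irreducible representations separating its points, identifying the finite-dimensional irreducible representations with the $\tilde{\pi}_{y,t}$, and, for an infinite-dimensional irreducible $\rho = \pi \times u$, invoking Proposition~\ref{irrfri4} and Corollary~\ref{bildkp4} to write $\ker \rho = \{a : a(k)_{\upharpoonright X_{\pi}} = 0 \textup{ for all } k\}$ and then realizing this as $\bigcap_x \ker \tilde{\pi}_x$ over the dense set of aperiodic points of $X_{\pi}$ --- is the standard route and in essence reconstructs the proof in the cited source; all the steps, including the matrix-coefficient computation $\langle \tilde{\pi}_x(a) e_i, e_{i+k}\rangle = a(k)(\sigma^{i+k}x)$ and the observation that aperiodicity in $(X_{\pi},\sigma_{\pi})$ coincides with aperiodicity in $(X,\sigma)$ by invariance of $X_{\pi}$, check out.
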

\section{The structure of $C(X)'$ and $\pi(C(X))'$}
We shall now make a detailed analysis of the commutants $C(X)'$ of $C(X) \subseteq C^* (\Sigma)$ and $\pi(C(X))'$ of  $\pi(C(X)) \subseteq \tilde{\pi} (C^* (\Sigma))$, respectively. Here $\tilde{\pi} = \pi \times u$ is a Hilbert space representation of $C^* (\Sigma)$ as usual.
These $C^*$-subalgebras are defined as follows
\[C(X)' = \{a \in C^* (\Sigma) : af = fa \textup{ for all } f \in C(X)\}\] and \[\pi(C(X))' = \{\tilde{\pi} (a) \in \tilde{\pi} (C^* (\Sigma)) : \tilde{\pi} (af) = \tilde{\pi} (fa) \textup{ for all } f \in C(X)\}.\]
We shall need the following topological lemma.
\begin{lem}\label{bairegrej4}
The system $\Sigma = (X, \sigma)$ is topologically free if and only if $\Per^{n} (\sigma)$ has empty interior
for all positive integers $n$.
\end{lem}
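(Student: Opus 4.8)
The plan is to reformulate topological freeness purely in terms of interiors and then let the Baire category theorem do the work, mirroring the technique already used in Lemma~\ref{untat4}. First I would record the key translation: since the aperiodic and periodic points partition $X$, the set $\Per^{\infty}(\sigma)$ is dense in $X$ if and only if its complement $\Per(\sigma)$ has empty interior (using $X \setminus \overline{\Per^{\infty}(\sigma)} = (X \setminus \Per^{\infty}(\sigma))^0 = \Per(\sigma)^0$). I would also note the decomposition $\Per(\sigma) = \bigcup_{n=1}^{\infty} \Per^n(\sigma)$, together with the fact recalled in the preliminaries that each $\Per^n(\sigma)$ is closed in $X$. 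With these two observations the lemma becomes the statement that $\Per(\sigma)$ has empty interior if and only if each $\Per^n(\sigma)$ does.

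The forward implication is immediate: if $\Sigma$ is topologically free then $\Per(\sigma)$ has empty interior by the translation above, and since $\Per^n(\sigma) \subseteq \Per(\sigma)$ for every positive integer $n$, each $\Per^n(\sigma)$ inherits empty interior. No category argument is needed here.

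For the converse I would argue by contradiction. Assuming each $\Per^n(\sigma)$ has empty interior but $\Per(\sigma)$ does not, let $U \neq \emptyset$ be the interior of $\Per(\sigma)$. Then $U$ is a non-empty open subset of the compact Hausdorff space $X$, hence locally compact in the induced topology and therefore a Baire space. Writing $U = \bigcup_{n=1}^{\infty} (\Per^n(\sigma) \cap U)$, each set $\Per^n(\sigma) \cap U$ is closed in $U$ because $\Per^n(\sigma)$ is closed in $X$. By the Baire category theorem some $\Per^{n_0}(\sigma) \cap U$ must have non-empty interior in $U$; since $U$ is open in $X$, this interior is open in $X$ as well, so $\Per^{n_0}(\sigma)$ contains a non-empty open subset of $X$, contradicting the hypothesis that $\Per^{n_0}(\sigma)$ has empty interior. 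Hence $\Per(\sigma)$ has empty interior and $\Sigma$ is topologically free.

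The only real content is this Baire-category step in the converse, and it is a mild obstacle at most: the sets in play are honestly closed and the open set $U$ supplies the Baire space, exactly as in the proof of Lemma~\ref{untat4}. I do not anticipate any complication beyond keeping straight the distinction between interiors taken in $U$ versus in $X$, which is harmless precisely because $U$ is open in $X$.
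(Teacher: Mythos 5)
Your proof is correct and takes essentially the same approach as the paper: both directions rest on the decomposition $\Per(\sigma)=\bigcup_{n>0}\Per^n(\sigma)$ into closed sets, with the converse handled by a Baire category argument. The only cosmetic difference is that the paper applies the Baire property directly to the compact Hausdorff space $X$ (a countable union of closed sets with empty interior has empty interior), whereas you pass to the open set $U=\Per(\sigma)^0$ and invoke Baire there; both are equally valid.
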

\begin{proof}
Clearly, if there is a positive integer $n_0$ such that $\Per^{n_0}
(\sigma)$ has non-empty interior, the non-periodic points are not
dense. For the converse, we recall that $X$ is a Baire space
since it is compact and Hausdorff, and note that
\[\Per (\sigma) = \bigcup_{n > 0} \Per^n (\sigma).\]
If $\Per^{\infty} (\sigma)$ is not dense, its complement $\Per (\sigma)$ has 
non-empty interior, and as the sets $\Per^n (\sigma)$ are clearly
all closed, there must exist an integer $n_0 > 0$ such that
$\Per^{n_0} (\sigma)$ has non-empty interior since $X$ is
a Baire space. \end{proof}
The following proposition describes the commutants $C(X)'$ and $\pi(C(X))'$.
\begin {pro}\label{commbesk4}
Let $\tilde{\pi} = \pi \times u$ be a Hilbert space representation of $C^* (\Sigma)$.
\begin{enumerate}
\item $C(X)' = \{a \in C^*(\Sigma) : \supp (a(n))\subseteq \Per^n(\sigma)
 \textup{ for all }n\}$.
Consequently, $C(X)'= C(X)$ if and only if the dynamical system is
 topologically free.

\item $\pi(C(X))'$ consists of all elements $\tilde{\pi} (a)$ such that $\tilde{\pi}_{x_{\alpha}} (a) \in \pi_{x_{\alpha}}(C(X))$ and
 $\tilde{\pi}_{y_{\beta}, t_{\gamma}} (a) \in \pi_{y_{\beta},t_{\gamma}}(C(X))$ 
for all $\alpha,\beta,\gamma$ that appear in the description of the ideal $I=\ker(\tilde{\pi})$ as in Proposition~\ref{idbesk4}.
\end{enumerate}
\end{pro}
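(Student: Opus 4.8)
The plan is to treat the two parts separately, basing everything on the fact that an element of $C^*(\Sigma)$ is determined by its generalized Fourier coefficients. For (i), I would first compute, for a generalized polynomial $a$ and $f \in C(X)$, the Fourier coefficients of the commutator $af - fa$. Using $\delta^n f = \alpha^n(f)\delta^n$ together with the commutativity of $C(X)$, one finds $E\bigl((af-fa)\delta^{-n}\bigr) = a(n)\,(\alpha^n(f) - f)$ for every $n$. Since $a \mapsto E\bigl((af-fa)\delta^{-n}\bigr)$ and $a \mapsto a(n)(\alpha^n(f)-f)$ are both bounded and linear and agree on the dense algebraic part, this identity persists for all $a \in C^*(\Sigma)$. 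Hence $a \in C(X)'$ precisely when $a(n)(\alpha^n(f) - f) = 0$ for all $n$ and all $f$. Evaluating at a point $x$, the factor $\alpha^n(f)(x) - f(x) = f(\sigma^{-n}(x)) - f(x)$ can be made nonzero by a suitable $f$ exactly when $\sigma^{-n}(x) \neq x$, that is when $x \notin \Per^n(\sigma)$, since $C(X)$ separates the points of $X$. Thus the condition forces $a(n)$ to vanish off $\Per^n(\sigma)$, and as $\Per^n(\sigma)$ is closed this is equivalent to $\supp(a(n)) \subseteq \Per^n(\sigma)$, giving the stated description.

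For the consequence in (i), note that $C(X)$ consists of those $a$ with $a(n) = 0$ for $n \neq 0$, while $\Per^0(\sigma) = X$ imposes no constraint on $a(0)$. If $\Sigma$ is topologically free then by Lemma~\ref{bairegrej4} every $\Per^n(\sigma)$ with $n \neq 0$ has empty interior, so a continuous $a(n)$ supported there must vanish; hence $C(X)' = C(X)$. Conversely, if $\Sigma$ is not topologically free, Lemma~\ref{bairegrej4} produces an integer $n_0 > 0$ with $\Per^{n_0}(\sigma)$ of nonempty interior, and any nonzero $g \in C(X)$ supported in that interior gives $g\delta^{n_0} \in C(X)' \setminus C(X)$.

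For (ii), write $I = \ker\tilde{\pi}$ and use Proposition~\ref{idbesk4} to realize $I$ as the intersection of the kernels of the GNS representations $\tilde{\pi}_{x_{\alpha}}$ (aperiodic points) and $\tilde{\pi}_{y_{\beta}, t_{\gamma}}$ (periodic points with parameters). Forming the direct sum $\rho = \bigoplus_{\alpha} \tilde{\pi}_{x_{\alpha}} \oplus \bigoplus_{\beta,\gamma}\tilde{\pi}_{y_{\beta},t_{\gamma}}$ we have $\ker\rho = I = \ker\tilde{\pi}$, so there is a $*$-isomorphism $\tilde{\pi}(C^*(\Sigma)) \to \rho(C^*(\Sigma))$ carrying $\tilde{\pi}(a)$ to $\rho(a)$ and $\pi(f)$ to $\rho(f)$. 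Since commutation is an algebraic relation preserved by this isomorphism, and since it can be checked componentwise in a direct sum, $\tilde{\pi}(a) \in \pi(C(X))'$ if and only if, for every index, $\tilde{\pi}_{x_{\alpha}}(a)$ lies in the relative commutant of $\pi_{x_{\alpha}}(C(X))$ in $\tilde{\pi}_{x_{\alpha}}(C^*(\Sigma))$, and likewise for the periodic components.

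It then remains to identify each of these relative commutants with the image of $C(X)$ itself, which is the crux. For aperiodic $x_{\alpha}$ the representation $\tilde{\pi}_{x_{\alpha}}$ is infinite-dimensional and irreducible, so $\Sigma_{\pi_{x_{\alpha}}}$ is topologically free by Proposition~\ref{irrfri4}; Corollary~\ref{bildkp4} then identifies $\tilde{\pi}_{x_{\alpha}}(C^*(\Sigma))$ with $C^*(\Sigma_{\pi_{x_{\alpha}}})$ and $\pi_{x_{\alpha}}(C(X))$ with $C(X_{\pi_{x_{\alpha}}})$, whereupon Theorem~\ref{triquivc4} shows the latter is maximal abelian and hence equal to its own relative commutant. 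For periodic $y_{\beta}$ with $\Per(y_{\beta}) = p$, the representation $\tilde{\pi}_{y_{\beta},t_{\gamma}}$ is irreducible on $\mathbb{C}^p$, hence onto $M_p(\mathbb{C})$, while the $p$ distinct points $y_{\beta}, \sigma(y_{\beta}), \dots, \sigma^{p-1}(y_{\beta})$ are separated by $C(X)$, so $\pi_{y_{\beta},t_{\gamma}}(C(X))$ is the full algebra of diagonal matrices, whose commutant in $M_p(\mathbb{C})$ is itself. Combining both cases yields exactly the asserted description. I expect the componentwise reduction via the common kernel to be the delicate bookkeeping step, and the identification of the relative commutants — the maximal abelian property in each GNS fibre — to carry the genuine mathematical content.
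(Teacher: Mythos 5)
Your proposal is correct and follows essentially the same route as the paper's proof: part (i) via the Fourier coefficients of $af-fa$ and Lemma~\ref{bairegrej4}, and part (ii) by reducing through Proposition~\ref{idbesk4} to the irreducible GNS fibres and identifying the relative commutants there via Proposition~\ref{irrfri4}, Corollary~\ref{bildkp4} and the diagonal-matrix picture for periodic points. The only cosmetic deviations are your density argument for the coefficient identity (the paper gets it directly from the module property of $E$), the direct-sum packaging of the kernel decomposition, and your appeal to Theorem~\ref{triquivc4} where the paper reuses its own part (i).
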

\begin{proof}The first assertion is a direct extension of~\cite[Corollary 3.4]{SSD14} to the context of $C^*$-crossed products. The main steps of the proof are contained in the first part of the
 proof of \cite [Theorem 5.4]{T24}, but we reproduce them here for the 
reader's convenience, and because this is also  our basic starting point. Let $a \in C^* (\Sigma)$ and $f \in C(X)$ be arbitrary. We then have, for $n \in \mathbb{Z}$,
\begin{align*}
(fa)(n) & = E(fa\delta^{\ast n}) = f \cdot E(a \delta^{*n}) = f \cdot a(n),\\
(af)(n) & = E(af \delta^{*n}) = E(a \delta^{*n} \alpha^n (f)) = E(a \delta^{*n}) \cdot \alpha^n (f) = a(n) \cdot f\circ \sigma^{-n}.
\end{align*}
Hence for $a \in C(X)'$ we have, for any $f \in C(X), x \in X$ and $n \in \mathbb{Z}$, that
\[f(x)\cdot a(n)(x) =  a(n)(x) \cdot f \circ \sigma^{-n} (x).\]
Therefore, if $a(n)(x)$ is not zero we have that $f(x) =
 f \circ \sigma^{-n} (x)$ for all $f \in C(X)$. It follows that $\sigma^{-n} (x) = x$ and
hence that $x$ belongs to the set $\Per^n(\sigma)$, whence $\supp(a(n))\subseteq
 \Per^n(\sigma)$.

Conversely, if $\supp(a(n)) \subseteq \Per^n (\sigma)$ for every $n$, it follows easily from the above that  \[(fa)(n) = (af)(n)\] 
for every $f \in C(X)$ and $n \in \mathbb{Z}$ and hence that $a$ belongs to $C(X)'$.

Moreover, by Lemma~\ref{bairegrej4}, $\Sigma$ is topologically
 free if and only if for every nonzero integer $n$ the set $\Per^n(\sigma)$
has empty interior. So when the system is topologically free, we see from the above description of $C(X)$ that an element $a$ in $C(X)'$  necessarily belongs to $C(X)$. If $\Sigma$ is not topologically free, however, some $\Per^n (\sigma)$ has non-empty interior and hence there is a non-zero function $f \in C(X)$ such that $\supp(f) \subseteq \Per^n (\sigma)$. Then $f \delta^n \in C(X)' \setminus C(X)$ by the above.

For the second assertion, note that for an element $a$ in $C^*(\Sigma)$, 
$\tilde{\pi}(a)$ belongs to $\pi(C(X))'$ if and only if $af - fa$ belongs
 to the
kernel 
$I$ for every function $f \in C(X)$. Hence this is equivalent to saying that the
 image of $a$ belongs to the commutant of the image of $C(X)$ for all 
irreducible representations with respect to the
indices $\alpha,\beta,\gamma$. This in turn is equivalent to the
 assertion in (ii) because when $x$ is aperiodic we know that $\tilde{\pi}_x$ is infinite-dimensional, whence it follows from Proposition~\ref{irrfri4} that $\Sigma_{\pi_x}$ is topologically free, and since $C(X_{\pi_x})$ corresponds to $\pi_x (C(X))$ under the isomorphism in Corollary~\ref{bildkp4}, part (i) of this proposition implies that $\pi_x (C(X))' = \pi_x (C(X))$. So $\tilde{\pi}_{x_{\alpha}} (a) \in \pi_{x_{\alpha}} (C(X))$ for all $\alpha$ that occur in the description of $I$. When $y$ is periodic with period $n$, the image $\pi_{y,t}(C(X))$ consists of the diagonal
matrices in $M_n$ and thus coincides with its commutant. We conclude that $\tilde{\pi}_{y_{\beta}, t_{\gamma}} (a) \in \pi_{y_{\beta}, t_{\gamma}} (C(X))$ for all $\beta, \gamma$ that occur in the description of $I$.
\end{proof}
Before continuing, we recall the following noncommutative version of Fejér's theorem on Ces\`{a}ro sums, which we shall use in our 
arguments. 
\begin{pro}\label{cesar4}\cite[Proposition 1]{T44}.
The sequence $\{\sigma_n (a)\}_{n =0}^{\infty}$, where $\sigma_n (a)$ is the $n$-th generalized Ces\`{a}ro sum of an element $a \in 
C^*(\Sigma)$, defined by
\[
\sigma_n(a) = \sum^n_{-n}( 1 - \frac{\mid i\mid}{n + 1})a(i)\delta^i, 
\]
converges to $a$ in norm.
\end{pro}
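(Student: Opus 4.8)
The plan is to realize the generalized Cesàro sums as integrals of the dual (gauge) action of the circle against the classical Fejér kernel, and then to run the usual approximate-identity argument. First I would introduce the gauge action: for each $z \in \mathbb{T}$, the embedding $C(X) \hookrightarrow C^*(\Sigma)$ together with the unitary $z\delta$ forms a covariant representation of $\Sigma$, since $z\delta$ is unitary and $\Ad(z\delta) = \Ad\delta = \alpha$ on $C(X)$. By the universal property of the crossed product this induces a $*$-homomorphism $\gamma_z : C^*(\Sigma) \to C^*(\Sigma)$ fixing $C(X)$ and sending $\delta \mapsto z\delta$; as $\gamma_{z^{-1}}$ is an inverse, each $\gamma_z$ is a $*$-automorphism, hence isometric, and on a generalized polynomial it acts by $\gamma_z(\sum_k f_k\delta^k) = \sum_k z^k f_k\delta^k$. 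I would then record that $z \mapsto \gamma_z(a)$ is norm-continuous for every $a \in C^*(\Sigma)$: this is immediate on the algebraic part, where the sum is finite, and it passes to all of $C^*(\Sigma)$ by a standard $\epsilon/3$-argument using density of the algebraic part together with the fact that each $\gamma_z$ is isometric.

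Next I would identify the Cesàro sum as an integral against the Fejér kernel. With normalized Haar measure $dz$ on $\mathbb{T}$ and
\[
F_n(z) = \sum_{i=-n}^{n}\Bigl(1 - \frac{|i|}{n+1}\Bigr) z^{-i},
\]
I claim that $\sigma_n(a) = \int_{\mathbb{T}} F_n(z)\,\gamma_z(a)\,dz$, the integral being a Bochner integral of the norm-continuous integrand $z \mapsto F_n(z)\gamma_z(a)$. Both sides are bounded and linear in $a$ for fixed $n$, and on a generalized polynomial the identity follows by term-by-term integration together with the orthogonality relations (namely that $\int_{\mathbb{T}} z^m\,dz$ vanishes unless $m=0$); the general case then follows by density. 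I would also recall the two standard facts about the Fejér kernel: $F_n(z) \ge 0$ for all $z$, and $\int_{\mathbb{T}} F_n(z)\,dz = 1$ (the latter being just the $i=0$ term). Consequently $\{F_n\}$ is an approximate identity concentrating at $z=1$, in the sense that $\int_{\mathbb{T}\setminus N} F_n(z)\,dz \to 0$ as $n \to \infty$ for every neighborhood $N$ of $1$.

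Finally I would estimate. Using $\int_{\mathbb{T}} F_n\,dz = 1$ to write $a = \int_{\mathbb{T}} F_n(z)\,a\,dz$, I obtain
\[
\sigma_n(a) - a = \int_{\mathbb{T}} F_n(z)\,\bigl(\gamma_z(a) - a\bigr)\,dz, \qquad \|\sigma_n(a) - a\| \le \int_{\mathbb{T}} F_n(z)\,\|\gamma_z(a) - a\|\,dz,
\]
where positivity of $F_n$ is used in the last inequality. Given $\epsilon > 0$, continuity of $z \mapsto \gamma_z(a)$ at $z=1$ (where $\gamma_1 = \id$) furnishes a neighborhood $N$ of $1$ on which $\|\gamma_z(a) - a\| < \epsilon$; the contribution of $N$ to the integral is then at most $\epsilon$, while the contribution of $\mathbb{T}\setminus N$ is at most $2\|a\|\int_{\mathbb{T}\setminus N} F_n(z)\,dz$, which tends to $0$. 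Hence $\|\sigma_n(a) - a\| \to 0$.

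The conceptual content is entirely classical once the gauge action is in hand, so the only genuine work is the preparatory bookkeeping: verifying that $\gamma$ is a well-defined, isometric, strongly continuous action, and justifying the Bochner-integral representation of $\sigma_n(a)$ by interchanging the finite sum with the integral on a dense subalgebra. I expect these two steps to be where care is needed, but neither presents a real obstacle, and the Fejér concentration estimate is standard.
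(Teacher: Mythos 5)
Your proof is correct, and it is essentially the standard argument: the paper itself gives no proof of this proposition, citing it as \cite[Proposition 1]{T44}, and the proof in that reference (and in the crossed-product literature generally, going back to Zeller-Meier) is exactly the route you take --- realize $\sigma_n(a)$ as the Bochner integral of the dual circle action $\gamma_z$ against the Fej\'er kernel, then invoke positivity, normalization, and concentration of the kernel together with norm continuity of $z \mapsto \gamma_z(a)$. Your two flagged bookkeeping points (well-definedness and strong continuity of $\gamma$, and the sum--integral interchange on the dense algebraic part) are handled correctly, so there is nothing to fix.
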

Actually it is known that replacing the Ces\`{a}ro sums by any other summability kernel such as the de la Vallée-Poussin  kernel, 
Jackson kernel etc, we obtain the corresponding approximation sequences 
converging to $a$ in norm (\cite[Proposition 1]{T44}).

In the passage following Corollary~\ref{bildkp4} we described the pure state extensions to $C^* (\Sigma)$ of the point evaluations on $C(X)$. Recalling the notation introduced there, we define the following two sets.
\[
\Phi= \{\varphi_x : x \in \Per^{\infty}(\sigma)\}\cup \{\varphi_{y,t} :
 y \in \Per(\sigma), t \in \mathbb{T}\}
.\]
\[\Phi' = \{\varphi_x : x \in \Per^{\infty}(\sigma)\}\cup \{\varphi_{y,t} : y \in \PIP(\sigma), t \in \mathbb{T}\}.
\]
We notice that a representation $\tilde{\pi}$ of $C^*(\Sigma)$ can be factored as 
$\tilde{\pi} = \hat{\pi}\circ \tilde{\rho}$, where $\tilde{\rho}$ is the canonical homomorphism
from $C^*(\Sigma)$ to $C^*(\Sigma_{\pi})$ induced by the restriction map from $C(X)$ to $C(X_{\pi})$ and $\hat{\pi}$ is the associated homomorphism from $C^*(\Sigma_{\pi})$ onto
 $\tilde{\pi}(C^*(\Sigma))$. Writing this out, we have
 \begin{align*}
 f \stackrel{\tilde{\rho}}{\mapsto} & f_{\upharpoonright X_{\pi}} \stackrel{\hat{\pi}}{\mapsto} \pi (f) \textup{ for } f \in C(X),\\
 \delta \stackrel{\tilde{\rho}}{\mapsto} & \delta_{\pi} \stackrel{\hat{\pi}}{\mapsto} \tilde{\pi} (\delta) = u.
 \end{align*}
Note that here the restriction of $\hat{\pi}$ to $C(X_{\pi})$ is an 
isomorphism onto $\pi(C(X))$.
In the following arguments we often use this factorization, and may then regard $C(X_{\pi})$ as an  embedded
 subalgebra of $\tilde{\pi}(C^*(\Sigma))= \hat{\pi}(C^*(\Sigma_{\pi}))$. Moreover, we
 consider the pure state extensions to $C^*(\Sigma_{\pi})$
 of point evaluations on $C(X_{\pi})$ as well as the pure state
 extensions to $\tilde{\pi}(C^*(\Sigma))$ of point evaluations on $C(X_{\pi})$ when the latter is viewed as an embedded
subalgebra of the former. 
We denote the families of pure state
 extensions to $C^*(\Sigma_{\pi})$ corresponding to $\Phi$ (the pure state extensions of all point evaluations on $C(X_{\pi})$) and to $\Phi'$ (the pure state extensions of point evaluations on $C(X_{\pi})$ in the set of points $\Per^{\infty}(\sigma_{\pi}) \cup \PIP(\sigma_{\pi})$) by $\Phi_{\pi}$ and $\Phi'_{\pi}$, respectively.
The family of pure state extensions to $\tilde{\pi}(C^*(\Sigma))$ of point evaluations on the aforementioned embedded copy of $C(X_{\pi})$ will be denoted by $\Phi(\tilde{\pi})$.

For the following arguments, we recall the notion of right multiplicative domain
 for a unital positive linear map $\tau$ between two unital
 $C^*$-algebras
$A$ and $B$. We write
\[
A^r_{\tau}= \{a \in A : \tau(ax) = \tau(a)\tau(x) \ \forall x\in A\},
\]
and call this set the right multiplicative domain of $\tau$.
For a detailed account of the theory of positive linear maps between $C^*$-algebras, we refer to~\cite{paulsen4}.
Using the fact that positive linear maps between unital $C^*$-algebras respect involution, the following right-sided version of~\cite[Theorem 3.1]{C4}, which concerns left multiplicative domains, is readily concluded.
\begin{thm}~\label{choi4}
If $\tau : A \rightarrow B$ is a $2$-positive linear map between two unital $C^*$-algebras $A$ and $B$, then $A^r_{\tau} = \{a \in A : \tau(aa^*) = \tau(a)\tau(a)^*\}$.
\end{thm}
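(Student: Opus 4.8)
The plan is to obtain this right-sided statement as a formal consequence of its left-sided counterpart \cite[Theorem 3.1]{C4}, which for a $2$-positive unital map $\tau$ identifies the left multiplicative domain $A^l_{\tau} = \{a \in A : \tau(xa) = \tau(x)\tau(a) \textup{ for all } x \in A\}$ with $\{a \in A : \tau(a^*a) = \tau(a)^*\tau(a)\}$. The only additional ingredient I need is the elementary fact, alluded to in the text, that a positive linear map between unital $C^*$-algebras respects the involution, so that $\tau(a^*) = \tau(a)^*$ for every $a \in A$; this follows since $\tau$ preserves self-adjointness and every element is of the form $h + ik$ with $h, k$ self-adjoint.

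First I would dispose of the trivial inclusion: if $a \in A^r_{\tau}$, then taking $x = a^*$ in $\tau(ax) = \tau(a)\tau(x)$ and using $\tau(a^*) = \tau(a)^*$ yields $\tau(aa^*) = \tau(a)\tau(a)^*$, so $a$ lies in the set on the right. For the reverse inclusion the idea is to transport the hypothesis to the adjoint element and then invoke the left-sided theorem. Assuming $\tau(aa^*) = \tau(a)\tau(a)^*$, I would set $c = a^*$ and compute, using $\tau(c) = \tau(a)^*$, that $\tau(c^*c) = \tau(aa^*) = \tau(a)\tau(a)^* = \tau(c)^*\tau(c)$; hence $c$ satisfies the equality that, by \cite[Theorem 3.1]{C4}, characterizes membership in $A^l_{\tau}$.

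Consequently $\tau(xc) = \tau(x)\tau(c)$, that is $\tau(xa^*) = \tau(x)\tau(a)^*$, for all $x \in A$. Taking adjoints of both sides and once more using that $\tau$ respects involution turns this into $\tau(ax^*) = \tau(a)\tau(x^*)$, and since $x^*$ runs over all of $A$ as $x$ does, I obtain $\tau(ay) = \tau(a)\tau(y)$ for all $y \in A$, i.e. $a \in A^r_{\tau}$. This closes the circle of implications $a \in A^r_{\tau} \Longleftrightarrow a^* \in A^l_{\tau} \Longleftrightarrow \tau(aa^*) = \tau(a)\tau(a)^*$.

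There is no genuine obstacle here, which is why the statement can be described as readily concluded: all of the analytic substance — namely the Kadison--Schwarz inequality and the rigidity of its equality case — is already packaged in \cite[Theorem 3.1]{C4}. The only point demanding care is the bookkeeping with adjoints, in particular the reindexing $x \mapsto x^*$ and the systematic use of $\tau(a^*) = \tau(a)^*$ to move the defining conditions between $a$ and $a^*$ and between the products $xa$ and $ax$.
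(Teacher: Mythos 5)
Your proposal is correct and follows exactly the route the paper intends: the paper offers no detailed argument, remarking only that the right-sided statement is ``readily concluded'' from Choi's left-sided theorem \cite[Theorem 3.1]{C4} together with the fact that positive linear maps between unital $C^*$-algebras respect involution, which is precisely the correspondence $a \leftrightarrow a^*$ you carry out. Your write-up simply makes explicit the adjoint bookkeeping that the paper leaves to the reader.
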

Since a state of a unital $C^*$-algebra is even completely positive, the above result holds in particular when $\tau$ is a state. The Cauchy-Schwarz inequality for states on $C^*$-algebras implies that if a state vanishes on a positive element, $a$, then it also vanishes on its positive square root, $\sqrt{a}$. It follows from Theorem~\ref{choi4} that $\sqrt{a}$, and hence $a$, is in the right multiplicative domain of every state that vanishes on $a$.
We shall make use of right multiplicative domains in the following lemma. Recall that a family of states of a $C^*$-algebra $A$ is said to be total if the only positive element of $A$ on which every state in the family vanishes, is zero.
\begin{lem}\label{total4} The family $\Phi(\tilde{\pi})$ is total on
 $\tilde{\pi}(C^*(\Sigma))$. Furthermore, the familiy $\Phi'$ is total on $C^*(\Sigma)$.
\end{lem}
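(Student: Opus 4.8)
The plan is to prove both totality statements in contrapositive form: given a nonzero positive element of the relevant algebra, I will exhibit a state in the family that is strictly positive on it. I would dispatch the self-contained assertion about $\Phi'$ first and then treat $\Phi(\tilde{\pi})$.

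\emph{Totality of $\Phi'$.} Let $a \in C^*(\Sigma)$ with $a \geq 0$ and $a \neq 0$. Since $E$ is faithful and $a = (\sqrt{a})^*\sqrt{a}$ with $\sqrt{a}\neq 0$, the zeroth Fourier coefficient $a(0)=E(a)$ is a nonzero, nonnegative function in $C(X)$, so $U=\{x\in X: a(0)(x)>0\}$ is a nonempty open set. By Lemma~\ref{untat4} the set $U$ meets $\Per^{\infty}(\sigma)\cup\PIP(\sigma)$; fix $z$ in this intersection. If $z$ is aperiodic, then $\varphi_z(a)=(\mu_z\circ E)(a)=a(0)(z)>0$ with $\varphi_z\in\Phi'$, and we are done. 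The only genuine work is the case $z\in\PIP(\sigma)$, say with $\Per(z)=p$, where I would integrate the nonnegative quantity $\varphi_{z,t}(a)$ over $t\in\mathbb{T}$ against normalized Haar measure.

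To evaluate this average I would approximate $a$ by its Cesàro sums (Proposition~\ref{cesar4}): since $\sigma_n(a)\to a$ in norm and each state is a contraction, $\varphi_{z,t}(a)=\lim_n\varphi_{z,t}(\sigma_n(a))$ uniformly in $t$. A short computation in the representation $\tilde{\pi}_{z,t}$ shows that $\varphi_{z,t}(a(i)\delta^i)=\langle\tilde{\pi}_{z,t}(a(i)\delta^i)e_0,e_0\rangle$ vanishes unless $p\mid i$ and equals $t^{k}a(kp)(z)$ when $i=kp$; integrating over $\mathbb{T}$ annihilates every term except $i=0$, so $\int_{\mathbb{T}}\varphi_{z,t}(\sigma_n(a))\,dt=a(0)(z)$ for every $n$, whence $\int_{\mathbb{T}}\varphi_{z,t}(a)\,dt=a(0)(z)>0$. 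As each $\varphi_{z,t}(a)\geq 0$, some $t_0$ gives $\varphi_{z,t_0}(a)>0$ with $\varphi_{z,t_0}\in\Phi'$, which proves totality of $\Phi'$.

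\emph{Totality of $\Phi(\tilde{\pi})$.} The difficulty here is that $\tilde{\pi}(C^*(\Sigma))$ carries no faithful conditional expectation in general, so the previous argument does not transplant; I would instead work through the factorization $\tilde{\pi}=\hat{\pi}\circ\tilde{\rho}$. Let $b\geq 0$ be nonzero in $\tilde{\pi}(C^*(\Sigma))=\hat{\pi}(C^*(\Sigma_\pi))$ and lift it to some $c\geq 0$ in $C^*(\Sigma_\pi)$ with $\hat{\pi}(c)=b$, so that $c\notin\ker\hat{\pi}$. Applying Proposition~\ref{idbesk4} to the closed ideal $\ker\hat{\pi}$ of $C^*(\Sigma_\pi)$ exhibits it as an intersection of kernels of representations $\tilde{\pi}_{x_\alpha}$ and $\tilde{\pi}_{y_\beta,t_\gamma}$ of $\Sigma_\pi$, each of which therefore annihilates $\ker\hat{\pi}$. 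Since $c$ lies outside this intersection, some such representation $\rho$ satisfies $\rho(c)\neq 0$; being a nonzero positive operator, $\rho(c)$ has a strictly positive diagonal entry $\langle\rho(c)e_i,e_i\rangle>0$ in the standard basis. The associated vector state is a pure state of $C^*(\Sigma_\pi)$ that extends the point evaluation $\mu_{\sigma^i(x)}$ (resp.\ $\mu_{\sigma^i(y)}$) on $C(X_\pi)$ and, being a vector state of $\rho$, vanishes on $\ker\hat{\pi}$; it thus descends through $\hat{\pi}$ to a state $\psi\in\Phi(\tilde{\pi})$ with $\psi(b)=\langle\rho(c)e_i,e_i\rangle>0$. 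This proves totality of $\Phi(\tilde{\pi})$.

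I expect the main obstacle to lie in the assertion about $\Phi(\tilde{\pi})$, precisely because of the missing conditional expectation; the device that resolves it is Proposition~\ref{idbesk4}, which supplies a detecting representation compatible with the quotient by $\ker\hat{\pi}$. I note that the right-multiplicative-domain observation recorded before the lemma provides an alternative to the Cesàro computation for $\Phi'$ (from $\varphi(a)=0$ one first deduces $\varphi(a\delta^n)=0$ for all $n$), but routing the argument through the faithfulness of $E$ together with the density Lemma~\ref{untat4} appears to be the most economical path.
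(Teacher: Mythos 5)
Your proof is correct, and for the assertion about $\Phi(\tilde{\pi})$ it takes a genuinely different route from the paper's. The paper argues by contradiction: assuming every member of $\Phi(\tilde{\pi})$ vanishes on $a \geq 0$, it uses Theorem~\ref{choi4} on right multiplicative domains to show that $\Phi(\tilde{\pi})$ vanishes on the entire closed ideal $J$ generated by $a$, then applies Proposition~\ref{idbesk4} to $\ker(\hat{\pi})$ to find an irreducible representation $\tilde{\pi}_{\epsilon}$ detecting a hypothetical nonzero positive $\hat{\pi}(b) \in J$, and derives the contradiction from a cyclicity argument that uses only the vector state at the cyclic vector $\xi_{\epsilon}$ --- which is precisely why the paper needs the ideal $J$ and its invariance under $\Ad \delta_{\pi}$. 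You dispense with the ideal and the multiplicative-domain machinery entirely by arguing contrapositively: lift the nonzero positive $b$ to a positive $c$ (you should say why a positive lift exists: take $d$ with $\hat{\pi}(d) = b^{1/2}$ and put $c = d^*d$), detect $c$ by one of the representations $\rho$ in the Proposition~\ref{idbesk4} description of $\ker(\hat{\pi})$, and use the vector state at \emph{whichever} basis vector $e_i$ satisfies $\langle \rho(c)e_i, e_i\rangle > 0$ (such an $i$ exists since $\rho(c)$ is positive and nonzero); this vector state is a pure extension of a point evaluation on $C(X_{\pi})$, kills $\ker(\hat{\pi}) \subseteq \ker(\rho)$, and hence descends to an element of $\Phi(\tilde{\pi})$ strictly positive on $b$ --- purity of the descended state holds because $\rho$ itself factors through $\hat{\pi}$ as an irreducible representation of the image, exactly as in the paper's construction of $\bar{\pi}$. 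Your argument is shorter and cleaner; what the paper's longer route records in exchange is the stronger intermediate fact that vanishing of $\Phi(\tilde{\pi})$ on $a$ forces vanishing on the whole ideal generated by $a$, and its multiplicative-domain technique recurs later (Propositions~\ref{unutv4} and~\ref{specint4}). For $\Phi'$ your argument is essentially the paper's --- Lemma~\ref{untat4} plus faithfulness of $E$ --- except that where the paper simply notes that if all pure state extensions $\varphi_{y,t}$ of $\mu_y$ vanish on a positive element then so does the state extension $\mu_y \circ E$, you prove this concretely via the Haar average $\int_{\mathbb{T}} \varphi_{y,t}\,dt = \mu_y \circ E$ evaluated on Ces\`{a}ro sums; just make explicit that $t \mapsto \varphi_{z,t}(a)$ is continuous (it is the uniform limit of the trigonometric polynomials $t \mapsto \varphi_{z,t}(\sigma_n(a))$), so that the integral is defined. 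These small points are routine and do not affect correctness.
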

\begin{proof} Suppose the family $\Phi(\tilde{\pi})$ vanishes on $a\geq 0$.
By the comment preceding this lemma it follows that $\sqrt{a}$ is in the right multiplicative domain of every state in $\Phi(\tilde{\pi})$. We consider the closed ideal $J$ generated by $a$, which by the functional calculus coincides
 with the closed ideal generated by $\sqrt{a}$. Note that this ideal is the closed linear span of elements having the form $fu^i agu^j= f(u^ia u^{i*})(u^i gu^{j})$ for functions $f, g \in C(X_{\pi})$. 
Clearly $f$ belongs to the right multiplicative domain of every member of $\Phi(\tilde{\pi})$ by Theorem~\ref{choi4}. To see that $u^i a u^{i*}$ does as well, note firstly that if $\varphi \in \Phi(\tilde{\pi})$ is a pure state extension of the point evaluation in $x \in X_{\pi}$ on $C(X_{\pi})$, then $\varphi \circ \Ad u^i \in \Phi(\tilde{\pi})$ since it is a pure state extension of the point evaluation in $\sigma_{\pi}^{-i}(x)$ on $C(X_{\pi})$ and hence $\varphi (u^i a u^{i*}) = 0$. As $u^i \sqrt{a}u^{i*}$ is the positive square root of $u^i a u^{i*}$ it follows again by the comment preceeding this lemma that $u^i \sqrt{a}u^{i*}$ is in the right multiplicative domain of every element of $\Phi(\tilde{\pi})$ and this clearly implies that $u^i a u^{*i}$ is as well. Hence for every $\varphi \in \Phi(\tilde{\pi})$ we have
\[\varphi(f(u^ia u^{i*})(u^i gu^{j})) = \varphi (f) \varphi(u^i a u^{*i}) \varphi(u^i g u^{j}) = 0\]
since $\varphi (u^i a u^{i*}) = 0$. We conclude that every member of $\Phi(\tilde{\pi})$ vanishes on the whole ideal $J$.
We want to deduce that $J$ is the zero ideal, so assume for a contradiction that it is not. Then there exists a positive element $b \in C^*(\Sigma_{\pi})$ such that $0 \neq \hat{\pi}(b) \in J$. Since $\ker(\hat{\pi})$ is a closed ideal of $C^*(\Sigma_{\pi})$, we know by Proposition~\ref{idbesk4} that it is the intersection of the kernels of a certain family of irreducible representations. So there must be at least one of the representations determining $\ker(\hat{\pi})$ as in Proposition~\ref{idbesk4}, $\tilde{\pi}_{\epsilon}$ say, for which $\tilde{\pi}_{\epsilon}(b) \neq 0$. As $\ker(\hat{\pi}) \subseteq \ker(\tilde{\pi}_{\epsilon})$, we may well-define an irreducible representation $\bar{\pi}$ of $\tilde{\pi}(C^*(\Sigma)) = \hat{\pi} (C^*(\Sigma_{\pi}))$ by $\bar{\pi}(\hat{\pi} (a)) = \tilde{\pi}_{\epsilon} (a)$ for $a \in C^*(\Sigma_{\pi})$.
Then the functional $\bar{\varphi}$ defined by $\bar{\varphi} (\hat{\pi}(a)) = (\bar{\pi}(\hat{\pi}(a)) \xi_{\epsilon}, \xi_{\epsilon}) = (\tilde{\pi}_{\epsilon} (a) \xi_{\epsilon}, \xi_{\epsilon}) = \varphi_{\epsilon} (a)$ is a pure state acting as a point evaluation on the embedded copy of $C(X_{\pi})$ in $\tilde{\pi}(C^*(\Sigma))$. Hence $\bar{\varphi} \in \Phi(\tilde{\pi})$ and by the above $\bar{\varphi} (\hat{\pi} (b)) = 0$ as $\hat{\pi}(b) \in J$. Similarly to above, one easily concludes that $\bar{\varphi} (\hat{\pi} (\delta_{\pi}^i b \delta_{\pi}^{*i})) = \varphi_{\epsilon} (\delta_{\pi}^i b \delta_{\pi}^{*i}) = 0$ for all integers $i$.
Writing this out, using that $b\geq 0$, we get $0 = \varphi_{\epsilon}(\delta_{\pi}^i b \delta_{\pi}^{*i}) = (\tilde{\pi}_{\epsilon}(\delta_{\pi}^i b \delta_{\pi}^{*i}) \xi_{\epsilon}, \xi_{\epsilon}) = (\tilde{\pi}_{\epsilon}(\sqrt{b}\delta_{\pi}^{*i}) \xi_{\epsilon}, \tilde{\pi}_{\epsilon}(\sqrt{b} \delta_{\pi}^{*i}) \xi_{\epsilon}) = \|\tilde{\pi}_{\epsilon}(\sqrt{b}\delta_{\pi}^{*i}) \xi_{\epsilon}\|^2$. Since $\tilde{\pi}_{\epsilon}$ is a representation of the kind described in the passage following Corollary~\ref{bildkp4}, we know that the closed linear span of the set $\{\tilde{\pi}_{\epsilon} (\delta_{\pi}^i) \xi_{\epsilon}\}_{i \in \mathbb{Z}}$ is the whole underlying Hilbert space $H_{\epsilon}$, whence the above equality implies that  $\tilde{\pi}_{\epsilon} (\sqrt{b}) = 0$ and thus finally $\tilde{\pi}_{\epsilon} (b)= 0$. This is a contradiction. Hence $J$ was the zero ideal after all. In particular, $a = 0$. 

Next, to see that $\Phi'$ is total on $C^*(\Sigma)$ we suppose it vanishes on a positive element $a \in C^*(\Sigma)$. Let $x \in \Per^{\infty}(\sigma)$. Since $\varphi_x$ is the unique pure state extension, and hence the unique state extension, of $\mu_x$, we have
\[
E(a)(x)= \mu_x\circ E(a) = \varphi_x(a) =0.
\]
Now let $y \in \PIP(\sigma)$. Since the set $\{\varphi_{y,t} : t \in \mathbb{T}\}$ exhausts all pure state extension of $\mu_y$ to $C^*(\Sigma)$ it follows that if all members vanish at $a$, its state extension $\mu_y \circ E$ vanishes at $a$, too. Hence by Lemma~\ref{untat4} $E(a)$ vanishes on a dense
 subset of $X$, and thus $E(a) = 0$. As $E$ is faithful, this 
implies that $a = 0$.
\end{proof}
Denote by $\Gamma$ the spectrum of $C(X)'$ and by $\Gamma(\tilde{\pi})$ the spectrum of $\pi(C(X))'$ for an arbitrary representation $\tilde{\pi} = \pi \times u$ of $C^*(\Sigma)$. The following theorem clarifies the structure of $\Gamma(\tilde{\pi})$. As every $C^*$-algebra has a faithful representation it also determines $\Gamma$.
\begin{thm}\label{comcom4}
Let $\tilde{\pi} = \pi \times u$ be a representation of $C^*(\Sigma)$.
\begin{enumerate}
\item  $\pi(C(X))'$ is a commutative $C^*$-subalgebra of $\tilde{\pi}(C^*(\Sigma))$
 (necessarily maximal abelian), invariant under $\Ad u$ and its inverse. In particular, $C(X)'$ is a maximal abelian
 $C^*$-subalgebra of $C^*(\Sigma)$, invariant under $\Ad \delta$ and its inverse. Moreover, the latter is the closure of its
 algebraic part, i.e., of the set of generalized polynomials in $C(X)'$.
\item The spectrum $\Gamma(\tilde{\pi})$ consists of the restrictions of the
 set $\Phi(\tilde{\pi})$ of pure state extensions.
\end{enumerate}
\end{thm}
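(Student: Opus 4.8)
The plan is to prove both parts for $\pi(C(X))'$ and to read off the corresponding statements for $C(X)'$ by specialising to a faithful representation, which exists for any $C^*$-algebra.

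For part (i): being the commutant of a self-adjoint subset, $\pi(C(X))'$ is automatically a norm-closed $*$-subalgebra, so the point is commutativity. I would use Proposition~\ref{idbesk4} to express $\ker(\tilde\pi)$ as the intersection of the kernels of a family $\{\tilde\pi_{x_\alpha}\}\cup\{\tilde\pi_{y_\beta,t_\gamma}\}$, which is then jointly faithful on $\tilde\pi(C^*(\Sigma))$. Given $\tilde\pi(a),\tilde\pi(b)\in\pi(C(X))'$, Proposition~\ref{commbesk4}(ii) places their images under each member of this family inside $\pi_{x_\alpha}(C(X))$ or $\pi_{y_\beta,t_\gamma}(C(X))$, both commutative (the former because $\Sigma_{\pi_{x_\alpha}}$ is topologically free, the latter being the diagonal matrices). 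Hence the images of $a$ and $b$ commute in every representation of the family, and joint faithfulness yields $\tilde\pi(ab)=\tilde\pi(ba)$. Maximal abelianness is then formal: from $\pi(C(X))\subseteq\pi(C(X))'$ we get $(\pi(C(X))')'\subseteq\pi(C(X))'$, and commutativity gives the reverse inclusion. Invariance under $\Ad u$ and $\Ad u^*$ follows because $\Ad u(\pi(f))=\pi(\alpha(f))$ shows $\Ad u$ maps $\pi(C(X))$ onto itself, and an automorphism preserving a subalgebra preserves its commutant. For the final assertion I would feed $a\in C(X)'$ into the Fej\'er--Ces\`aro approximation of Proposition~\ref{cesar4}: each $\sigma_n(a)$ is a generalized polynomial whose Fourier coefficients are scalar multiples of those of $a$, hence still supported in the relevant $\Per^n(\sigma)$, so $\sigma_n(a)\in C(X)'$ by Proposition~\ref{commbesk4}(i), and $\sigma_n(a)\to a$.

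For the first inclusion of part (ii), note that, $\pi(C(X))'$ being commutative, $\Gamma(\tilde\pi)$ is its character space, and I must show every $\varphi\in\Phi(\tilde\pi)$ restricts to a character. Let $(\bar\pi,H,\xi)$ be the (irreducible) GNS triple of $\varphi$; by definition $\varphi$ restricts to a point evaluation $\mu_x$ on the embedded $C(X_\pi)$, and the identity $\varphi(f^*f)=|f(x)|^2=|\varphi(f)|^2$ forces $\bar\pi(\pi(f))\xi=f(x)\xi$, so that $\xi$ lies in the joint eigenspace $V_x=\{v:\bar\pi(\pi(f))v=f(x)v\ \textup{for all}\ f\}$. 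For $a\in\pi(C(X))'$ the operator $\bar\pi(a)$ commutes with $\bar\pi(\pi(C(X)))$ and hence carries $\xi$ into $V_x$. The crucial point is that $V_x$ is one-dimensional: factoring $\varphi$ through $\hat\pi$ identifies $\bar\pi$ with one of the explicit irreducible representations $\tilde\pi_x$ or $\tilde\pi_{y,t}$ of $\Sigma_\pi$ recalled after Corollary~\ref{bildkp4}, in which $\pi(C(X))$ acts diagonally via $f\mapsto f(\sigma_\pi^i x)$; since the orbit of $x$ consists of distinct points (as $x$ is aperiodic, or periodic of exact period equal to the dimension), only $\xi=e_0$ solves the eigenvalue equations. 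Consequently $\bar\pi(a)\xi=\psi(a)\xi$ defines a character $\psi=\varphi|_{\pi(C(X))'}$.

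For the reverse inclusion, let $\psi\in\Gamma(\tilde\pi)$ be an arbitrary character. The set of states of $\tilde\pi(C^*(\Sigma))$ extending $\psi$ is non-empty (Hahn--Banach), weak-$*$ compact and convex, and because $\psi$ is \emph{pure} on $\pi(C(X))'$ it is moreover a face of the whole state space: any decomposition of an extension restricts to a decomposition of $\psi$, which must be trivial. By Krein--Milman this face has an extreme point $\varphi$, necessarily a pure state of $\tilde\pi(C^*(\Sigma))$ with $\varphi|_{\pi(C(X))'}=\psi$; restricting to the embedded $C(X_\pi)\subseteq\pi(C(X))'$ shows $\varphi$ extends a point evaluation, so $\varphi\in\Phi(\tilde\pi)$, as required. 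The main obstacle is the one-dimensionality of $V_x$ above: this is exactly where the soft commutation argument has to be supplemented with the concrete structure of $\tilde\pi_x$ and $\tilde\pi_{y,t}$ and with the distinctness of the orbit of $x$; everything else is either formal or a direct appeal to the quoted Propositions~\ref{idbesk4},~\ref{commbesk4} and~\ref{cesar4}.
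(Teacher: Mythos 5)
Your proof is correct. Part (i) is essentially the paper's own argument: commutativity via Proposition~\ref{idbesk4} together with Proposition~\ref{commbesk4}(ii) applied to $ab-ba$, maximal abelianness and invariance under $\Ad u$ as formal consequences, passage to $C(X)'$ via a faithful representation, and the Ces\`aro-sum argument for the density of the algebraic part. In part (ii), however, you take a genuinely different route to the key direction, namely that each $\varphi\in\Phi(\tilde{\pi})$ restricts to a character of $\pi(C(X))'$. The paper splits into cases: for aperiodic $x$ it invokes uniqueness of the pure state extension of $\mu_x$, pushed through $C^*(\Sigma)$, $\tilde{\pi}(C^*(\Sigma))$ and $\pi(C(X))'$ in turn; for periodic $y$ it first shows that $\tilde{\pi}_{y,t}$ annihilates $\ker(\tilde{\pi})$ (a cyclicity computation), so that Proposition~\ref{commbesk4}(ii) permits replacing elements of the commutant by functions inside $\varphi'_y$, after which multiplicativity is a short calculation. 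Your GNS argument treats both cases at once: the cyclic vector $\xi$ lies in the joint eigenspace $V_x$ of $\bar{\pi}(\pi(C(X)))$, elements of the commutant leave $V_x$ invariant, and $V_x$ is one-dimensional because in the concrete models $\tilde{\pi}_x$ and $\tilde{\pi}_{y,t}$ the algebra $\pi(C(X))$ acts diagonally through the \emph{distinct} points of the orbit. This is more unified and avoids any appeal to Proposition~\ref{commbesk4}(ii) in part (ii); the one step you should make explicit is the identification of $\bar{\pi}$ with $\tilde{\pi}_x$ or $\tilde{\pi}_{y,t}$, which holds because the GNS representation of $\varphi\circ\hat{\pi}$ kills the ideal $\ker\hat{\pi}$ (it is contained in the left kernel of that state) and therefore factors through $\hat{\pi}$ as the GNS representation of $\varphi$. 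You also spell out, via the face/Krein--Milman argument, the reverse inclusion that every character of $\pi(C(X))'$ is the restriction of some element of $\Phi(\tilde{\pi})$, a standard extension fact that the paper simply asserts. One mild advantage of the paper's case-split is that its intermediate facts (e.g.\ that $\varphi_{y,t}$ vanishing on $\ker\hat{\pi}$ forces $\ker\hat{\pi}\subseteq\ker\tilde{\pi}_{y,t}$) are reused later, in Lemma~\ref{vilkafinns4}, so nothing there is wasted.
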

\begin{proof}
Clearly, $\pi(C(X))'$ is a $C^*$-subalgebra of $\tilde{\pi} (C^*(\Sigma))$.
Take two elements $\tilde{\pi}(a)$ and $\tilde{\pi}(b)$ in
 $\pi(C(X))'$. Then by Proposition~\ref{commbesk4} (ii), $\tilde{\pi}_{x_{\alpha}}(ab - ba) = \tilde{\pi}_{y_{\beta}, t_{\gamma}} (ab-ba) = 0$ for all $\alpha, \beta, \gamma$ determining the kernel of $\tilde{\pi}$ as in Proposition~\ref{idbesk4}. Hence $\pi(C(X))'$ is indeed commutative and clearly it is maximal abelian. Invariance of $\pi(C(X))'$ under $\Ad u$ and its inverse follows readily. The corresponding statement about $C(X)'$ and $\Ad \delta$ and its inverse holds since $C^* (\Sigma)$ has a faithful representation, but can also be obtained by an explicit calculation using Proposition~\ref{commbesk4} (i) together with Proposition~\ref{cesar4}. The last statement of assertion (i) also follows from the characterization of $C(X)'$ in Proposition~\ref{commbesk4} (i) combined with Proposition~\ref{cesar4}.
For the assertion (ii), we may assume that $\tilde{\pi} = \hat{\pi}$, and hence that $C^*(\Sigma) = C^*(\Sigma_{\pi})$ with $X = X_{\pi}$ and $\sigma = \sigma_{\pi}$. Since every element of $\Gamma(\tilde{\pi})$ is the restriction to $\pi(C(X))'$ of at least one element of $\Phi(\tilde{\pi})$, it is sufficient to show that the restriction of each pure
 state in $\Phi(\tilde{\pi})$ to $\pi(C(X))'$ is in $\Gamma(\tilde{\pi})$. This is equivalent to proving that the restriction of every element of $\Phi(\tilde{\pi})$ to $\pi(C(X))'$ is multiplicative. For an aperiodic
point $x \in X$, recall that there is a unique pure state extension of $\mu_x$ from $C(X)$ to $C^*(\Sigma)$, as mentioned in the passage following Corollary~\ref{bildkp4}. This implies that there is a unique pure state extension of $\mu_x$ to $\tilde{\pi}(C^*(\Sigma))$, since if there were two different ones, $\psi_1$ and $\psi_2$, say, then $\psi_1 \circ \tilde{\pi}$ and $\psi_2 \circ \tilde{\pi}$ would be two different pure state extensions of $\mu_x$ to $C^*(\Sigma)$,  which is a contradiction. It follows that there is a unique pure state extension of $\mu_x$ to $\pi(C(X))'$, namely the restriction to this subalgebra of the pure state extension of $\mu_x$ to $\tilde{\pi}(C^*(\Sigma))$. The pure states of a commutative $C^*$-algebras are precisely its characters, so this restriction is indeed multiplicative. Now
 take a periodic point $y \in X$ and consider a pure state extension
$\varphi'_y$ of $\mu_y$ to $\tilde{\pi}(C^*(\Sigma))$. Then the pure
 state $\varphi'_y\circ \tilde{\pi}$ is a pure state extension of $\mu_y$ to
 $C^*(\Sigma)$, hence it is of the form $\varphi_{y,t}$ for some $t \in \mathbb{T}
 $. Then $\varphi_{y,t}$ vanishes on $I = \ker(\tilde{\pi})$ and it follows that $\tilde{\pi}_{y,t}$ does as well.
 To see this, let $a \in I$ be a positive element. Then, as $\varphi_{y,t}$ vanishes on $I$, we have that $0 = \varphi_{y,t} (\delta^i a \delta^{*i}) = (\tilde{\pi}_{y,t}(\sqrt{a}\delta^{*i}) \xi_{y,t}, \tilde{\pi}_{y,t}(\sqrt{a}\delta^{*i}) \xi_{y,t})$ and hence $\tilde{\pi}_{y,t}(\sqrt{a}) = 0$, as $H_{y,t}$ is the closed linear span of the set $\{\tilde{\pi}_{y,t} (\delta^i) \xi_{y,t}\}_{i \in \mathbb{Z}}$. We conclude that $\tilde{\pi}_{y,t}$ vanishes on $I$ since a closed ideal is generated by its positive part. This implies that there are parameters $\beta, \gamma$ appearing in the description of $I$ as in Proposition~\ref{idbesk4} such that $y_{\beta} = y$ and $t_{\gamma} = t$. Now, for an element
 $\tilde{\pi}(a)$ in $\pi(C(X))'$ we first see that
\[
\varphi'_y(\tilde{\pi}(a)) = \varphi_{y,t}(a) =
 (\tilde{\pi}_{y,t}(a)\xi_{y,t}, \xi_{y,t}).
\]
By (ii) of Proposition~\ref{commbesk4} we can replace $a$ by a function $f \in C(X)$. It follows that the rightmost side of the above equalities becomes
 simply
$f(y)$. Therefore, if for two elements $\tilde{\pi}(a)$ and
 $\tilde{\pi}(b)$ in $\pi(C(X))'$ we replace $a$ and $b$ with the functions $f$ and
 $g$, we 
have that
\begin{eqnarray*}
\varphi'_y(\tilde{\pi}(a)\tilde{\pi}(b))&=& \varphi'_y\circ
 \tilde{\pi}(ab) = \varphi'_y \circ \tilde{\pi} (fg) \\
    &= & f(y)g(y) =
 \varphi'_y(\tilde{\pi}(a))\varphi'_y(\tilde{\pi}(b)).
\end{eqnarray*}
Hence restrictions of pure state extensions to $\pi(C(X))'$ always
 induce characters on it. This completes the proof.
\end{proof}
We now introduce some notation. A character in $\Gamma(\tilde{\pi})$ is denoted by $\gamma(x)$ if it is the restriction to $\pi(C(X))'$ of some $\psi_x \in \Phi(\tilde{\pi})$ such that $\psi_x \circ \hat{\pi} = \varphi_x$ for $x \in \Per^{\infty}(\sigma_{\pi})$. Similarly, we denote by $\gamma(y,t)$ a character in $\Gamma(\tilde{\pi})$ that is the restriction to $\pi(C(X))'$ of some $\psi_{y,t} \in \Phi(\tilde{\pi})$ such that $\psi_{y,t} \circ \hat{\pi} = \varphi_{y,t}$ for $y \in \Per(\sigma_{\pi})$ and $t \in \mathbb{T}$.
Clearly every element of $\Gamma(\tilde{\pi})$ is of this form.
Note that in general it may happen that $\gamma(y,s) = \gamma(y,t)$ even though $s \neq t$. If $\tilde{\pi}$ is faithful and $\Sigma$ is topologically free, for example, we know by Theorem~\ref{triquivc4} that $\pi(C(X))' = \pi(C(X))$ and hence $\gamma(y,t)$ is independent of $t$.
A slight subtlety occurs as for $y \in \Per(\sigma_{\pi})$ it is not necessarily so that $\gamma(y,t)$ appears in $\Gamma(\tilde{\pi})$ for each $t \in \mathbb{T}$. 
In the following lemma we determine the $\gamma(y,t)$ that appear.
\begin{lem}\label{vilkafinns4}
$\Gamma(\tilde{\pi}) = \{\gamma(x) : x \in \Per^{\infty}(\sigma_{\pi})\} \cup \{\gamma(y,t) : \ker(\hat{\pi}) \subseteq \ker(\tilde{\pi}_{y,t})\}$. For every $y \in \Per (\sigma_{\pi})$ there is at least one $t \in \mathbb{T}$ such that $\gamma(y,t) \in \Gamma(\tilde{\pi})$.
\end{lem}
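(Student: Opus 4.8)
The plan is to prove the set equality by treating aperiodic and periodic points separately, working throughout with the factorization $\tilde{\pi} = \hat{\pi} \circ \tilde{\rho}$ so that the representations $\tilde{\pi}_x$, $\tilde{\pi}_{y,t}$ and the ideal $\ker(\hat{\pi})$ all live over $C^*(\Sigma_{\pi})$. By Theorem~\ref{comcom4}(ii), $\Gamma(\tilde{\pi})$ is exactly the set of restrictions to $\pi(C(X))'$ of the pure states in $\Phi(\tilde{\pi})$, and by the notation fixed above each such restriction is some $\gamma(x)$ with $x \in \Per^{\infty}(\sigma_{\pi})$ or some $\gamma(y,t)$ with $y \in \Per(\sigma_{\pi})$. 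Hence $\Gamma(\tilde{\pi})$ is automatically contained in the union of these two families, and the task is to determine precisely which indices occur.

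First I would treat aperiodic points. For $x \in \Per^{\infty}(\sigma_{\pi})$ the evaluation $\mu_x$ is a pure state of the embedded copy of $C(X_{\pi})$ inside $\tilde{\pi}(C^*(\Sigma))$, and since every pure state of a $C^*$-subalgebra admits a pure state extension to the ambient algebra, there is $\psi_x \in \Phi(\tilde{\pi})$ extending it. Composing with $\hat{\pi}$ produces a pure state extension of $\mu_x$ to $C^*(\Sigma_{\pi})$, which by the uniqueness of such an extension for aperiodic points must equal $\varphi_x$; thus $\gamma(x) \in \Gamma(\tilde{\pi})$ for every aperiodic $x$, with no further condition needed, establishing $\{\gamma(x) : x \in \Per^{\infty}(\sigma_{\pi})\} \subseteq \Gamma(\tilde{\pi})$.

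Next I would pin down which $\gamma(y,t)$ occur, for $y \in \Per(\sigma_{\pi})$. For the inclusion $\supseteq$, assume $\ker(\hat{\pi}) \subseteq \ker(\tilde{\pi}_{y,t})$; then $\tilde{\pi}_{y,t}$ factors as $\bar{\pi} \circ \hat{\pi}$ with $\bar{\pi}$ an irreducible representation of $\tilde{\pi}(C^*(\Sigma))$, exactly as in the proof of Lemma~\ref{total4}, and the vector state of $\bar{\pi}$ at $\xi_{y,t}$ is a pure state $\psi_{y,t}$ with $\psi_{y,t} \circ \hat{\pi} = \varphi_{y,t}$, so it restricts to $\mu_y$ on $C(X_{\pi})$, lies in $\Phi(\tilde{\pi})$, and yields $\gamma(y,t) \in \Gamma(\tilde{\pi})$. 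For the reverse implication, if $\gamma(y,t)$ comes from $\psi_{y,t} \in \Phi(\tilde{\pi})$ with $\psi_{y,t} \circ \hat{\pi} = \varphi_{y,t}$, then $\varphi_{y,t}$ annihilates $\ker(\hat{\pi})$; writing, for a positive $a$ in this ideal, $0 = \varphi_{y,t}(\delta_{\pi}^i a \delta_{\pi}^{*i}) = \|\tilde{\pi}_{y,t}(\sqrt{a}\delta_{\pi}^{*i})\xi_{y,t}\|^2$ and using that $\{\tilde{\pi}_{y,t}(\delta_{\pi}^i)\xi_{y,t}\}_{i}$ spans a dense subspace forces $\tilde{\pi}_{y,t}(\sqrt{a}) = 0$, hence $\tilde{\pi}_{y,t}(a) = 0$; since a closed ideal is generated by its positive part, $\ker(\hat{\pi}) \subseteq \ker(\tilde{\pi}_{y,t})$. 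This is the computation already carried out in the proof of Theorem~\ref{comcom4}(ii) and can simply be quoted.

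Finally, for the last assertion I would fix $y \in \Per(\sigma_{\pi})$ and again observe that $\mu_y$ is a pure state of the embedded $C(X_{\pi})$, hence has a pure state extension $\psi \in \Phi(\tilde{\pi})$; then $\psi \circ \hat{\pi}$ is a pure state extension of $\mu_y$ to $C^*(\Sigma_{\pi})$ and therefore equals $\varphi_{y,t}$ for some $t \in \mathbb{T}$, which gives $\gamma(y,t) \in \Gamma(\tilde{\pi})$. The only genuinely delicate step is the reverse implication for the $\gamma(y,t)$, where one must pass from the vanishing of the \emph{state} $\varphi_{y,t}$ on $\ker(\hat{\pi})$ to the vanishing of the whole GNS \emph{representation} $\tilde{\pi}_{y,t}$ on that ideal; everything else reduces to the standard existence of pure state extensions combined with the factorization through $\hat{\pi}$.
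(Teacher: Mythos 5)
Your proposal is correct and follows essentially the same route as the paper's own proof: existence of pure state extensions plus uniqueness of $\varphi_x$ for aperiodic points, the kernel-vanishing computation (quoted from the proof of Theorem~\ref{comcom4}) for the forward implication, and factorization of $\varphi_{y,t}$ through $\hat{\pi}$ for the converse. The only cosmetic difference is that you realize the extension $\psi_{y,t}$ as a vector state of the factored GNS representation $\bar{\pi}$, whereas the paper defines $\psi_{y,t}(\hat{\pi}(a)) = \varphi_{y,t}(a)$ directly; these are the same construction.
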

\begin{proof}
That $\gamma(x)$ appears in $\Gamma(\tilde{\pi})$ for every $x \in \Per^{\infty}(\sigma_{\pi})$ follows from the fact that $\mu_x$ has a unique pure state extension to $C^*(\Sigma_{\pi})$, namely $\varphi_x$. The last statement of the lemma follows since every $\mu_y$ has a pure state extension to $\hat{\pi}(C^*(\Sigma_{\pi}))$, hence its composition with $\hat{\pi}$ must be of the form $\varphi_{y,t}$ for some $t \in \mathbb{T}$.
If $\gamma(y,t) \in \Gamma(\tilde{\pi})$, then it has a pure state extension $\psi_{y,t}$ to $\hat{\pi}(C^*(\Sigma_{\pi}))$ such that $\psi_{y,t} \circ \hat{\pi} = \varphi_{y,t}$, whence $\varphi_{y,t}$ vanishes on $\ker({\hat{\pi}})$. As in the proof of Theorem~\ref{comcom4}, it follows readily that $\ker(\hat{\pi}) \subseteq \ker(\tilde{\pi}_{y,t})$. Conversely, if $\ker(\hat{\pi}) \subseteq \ker(\tilde{\pi}_{y,t})$ we can well-define a pure state extension $\psi_{y,t}$ of $\mu_y$ to $\hat{\pi}(C^*(\Sigma_{\pi}))$ by $\psi_{y,t} (\hat{\pi} (a)) = \varphi_{y,t} (a)$ for $a \in C^*(\Sigma_{\pi})$. In this notation, the restriction of $\psi_{y,t}$ to $\pi(C(X))'$ is $\gamma(y,t)$.
\end{proof}
When using this notation, the homeomorphism $\sigma(\tilde{\pi})$ of $\Gamma (\tilde{\pi})$ induced by the automorphism $\Ad u$ of $\pi(C(X))'$ is such that $\sigma(\tilde{\pi}) (\gamma(x)) = \gamma(\sigma_{\pi}(x))$ for $x \in \Per^{\infty}(\sigma_{\pi})$ and $\sigma(\tilde{\pi}) (\gamma(y,t)) = \gamma(\sigma_{\pi}(y),t)$ for $y \in \Per (\sigma_{\pi})$ and $t \in \mathbb{T}$. To see this, note that if, for example, $\psi_{y,t} \circ \hat{\pi} = \varphi_{y,t}$, then $\psi_{y,t} \circ \Ad u \circ \hat{\pi} = \varphi_{\sigma_{\pi}^{-1}(y),t}$. Since $\pi(C(X))'$ is invariant under $\Ad u$ and its inverse by Theorem~\ref{comcom4} (i) and $\psi_{y,t}$ extends $\gamma(y,t)$, it follows that $\psi_{y,t} \circ \Ad u$ extends $\gamma(y,t) \circ \Ad u$. We use the notation $\tilde{\Sigma} = (\Gamma(\tilde{\pi}), \sigma(\tilde{\pi}))$ for the corresponding dynamical system. Similarly, we denote by $\tilde{\sigma}$ the homeomorphism of $\Gamma$ induced by the restriction of the automorphism $\Ad \delta$ to $C(X)'$.
Note that $\gamma(x) \in \Per^{\infty} (\sigma(\tilde{\pi}))$ for every $x \in \Per^{\infty} (\sigma_{\pi})$ and that $\gamma(y,t) \in \Per_k (\sigma(\tilde{\pi}))$ implies that $y \in \Per_k (\sigma_{\pi})$. 

Define $\Phi'(\tilde{\pi})$ to be the set of all pure state extensions of elements of \[\Per^{\infty}(\sigma(\tilde{\pi})) \cup \PIP(\sigma(\tilde{\pi}))\] to $\tilde{\pi}(C^*(\Sigma))$.
It is important to know which elements of $\Gamma(\tilde{\pi})$ have unique pure state extensions to elements of $\Phi(\tilde{\pi})$. The following proposition sheds some light upon this question.
\begin{pro}\label{unutv4}
The elements of $\Per^{\infty}(\sigma(\tilde{\pi})) \cup \PIP(\sigma(\tilde{\pi}))$ all have a unique extension to an element of $\Phi(\tilde{\pi})$.
\end{pro}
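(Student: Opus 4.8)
The plan is to treat the aperiodic points and the periodic interior points of $\tilde{\Sigma}=(\Gamma(\tilde{\pi}),\sigma(\tilde{\pi}))$ separately; the first case is essentially immediate and the second carries all the content. As in the proof of Theorem~\ref{comcom4} I would first reduce to the case $\tilde{\pi}=\hat{\pi}$, so that $X=X_{\pi}$ and $\sigma=\sigma_{\pi}$. Let $\rho:\Gamma(\tilde{\pi})\to X_{\pi}$ be the continuous surjection dual to the inclusion $\pi(C(X))\subseteq\pi(C(X))'$, so that $\rho(\gamma(x))=x$ and $\rho(\gamma(y,t))=y$. Any $\psi\in\Phi(\tilde{\pi})$ extending a character $\gamma$ restricts on $\pi(C(X))$ to the point evaluation $\mu_{\rho(\gamma)}$. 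Hence for $\gamma(x)$ with $x\in\Per^{\infty}(\sigma(\tilde{\pi}))$, and therefore $x$ aperiodic, every such $\psi$ is a pure state extension of $\mu_{x}$ to $\tilde{\pi}(C^*(\Sigma))$; since $\mu_x$ has a \emph{unique} such extension (established in the proof of Theorem~\ref{comcom4}), $\gamma(x)$ has a unique extension in $\Phi(\tilde{\pi})$.

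For $\gamma(y,t)\in\PIP(\sigma(\tilde{\pi}))$, say of period $k$ (so $y\in\Per_k(\sigma_{\pi})$ by the remark following Lemma~\ref{vilkafinns4}), every extension $\psi\in\Phi(\tilde{\pi})$ restricts to $\mu_y$ on $\pi(C(X))$, so $\psi\circ\hat{\pi}$ is a pure state extension of $\mu_y$ and thus $\psi=\psi_{y,s}$ with $\psi_{y,s}\circ\hat{\pi}=\varphi_{y,s}$ for some valid $s$ (those with $\ker(\hat{\pi})\subseteq\ker(\tilde{\pi}_{y,s})$, by Lemma~\ref{vilkafinns4}), and $\gamma(y,s)=\gamma(y,t)$. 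Distinct $s$ give distinct $\varphi_{y,s}$, hence distinct $\psi_{y,s}$, so uniqueness of the extension is equivalent to the assertion that $s=t$ is the only valid parameter with $\gamma(y,s)=\gamma(y,t)$. To separate parameters I would exhibit an element of the commutant that detects $s$: if there is $f\in C(X)$ with $f(y)\neq 0$ and $\supp(f)\subseteq\Per^{k}(\sigma_{\pi})$, then $f\delta^{k}\in C(X)'$ by Proposition~\ref{commbesk4}, so $\tilde{\pi}(f\delta^{k})\in\pi(C(X))'$, and a short GNS computation using $\tilde{\pi}_{y,s}(\delta^{k})=s\cdot I$ gives $\gamma(y,s)(\tilde{\pi}(f\delta^{k}))=s\,f(y)$; thus $\gamma(y,s)=\gamma(y,t)$ forces $s=t$. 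Such an $f$ exists, by normality of $X_{\pi}$ and Urysohn's lemma, precisely when $y$ lies in the interior $\Per^{k}(\sigma_{\pi})^{0}$. Everything therefore reduces to the topological implication $\gamma(y,t)\in\PIP(\sigma(\tilde{\pi}))\Rightarrow y\in\Per^{k}(\sigma_{\pi})^{0}$.

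To prove this transfer, let $W$ be the interior of $\Per_k(\sigma(\tilde{\pi}))$ in $\Gamma(\tilde{\pi})$, an open set containing $\gamma(y,t)$, and put $V=\rho(\Gamma(\tilde{\pi})\setminus W)$, which is closed since $\rho$ is a closed map between compact Hausdorff spaces. If $z\notin\Per^{k}(\sigma_{\pi})$ then no point of the fiber $\rho^{-1}(z)$ can have period $k$ in $\tilde{\Sigma}$: an aperiodic value gives an aperiodic character, and $\gamma(z,s)\in\Per_k(\sigma(\tilde{\pi}))$ would force $z\in\Per_k(\sigma_{\pi})\subseteq\Per^{k}(\sigma_{\pi})$. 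Hence $\rho^{-1}(z)\subseteq\Gamma(\tilde{\pi})\setminus W$ and so $z\in V$; equivalently $X_{\pi}\setminus V$ is an open subset of $\Per^{k}(\sigma_{\pi})$. It thus suffices to show $y\notin V$, i.e. that the \emph{whole} fiber $\rho^{-1}(y)$ lies in $W$. This is the crux, and I would obtain it from the gauge symmetry: the action determined by $\beta_z(f\delta^{n})=z^{n}f\delta^{n}$ restricts to automorphisms of $C(X)'$, commutes with $\Ad\delta$, and on characters realizes $\gamma(y,s)\mapsto\gamma(y,z^{k}s)$, hence acts transitively on the fiber $\{\gamma(y,s)\}$ while preserving $\Per_k(\sigma(\tilde{\pi}))$ and its interior $W$; so membership of the single point $\gamma(y,t)$ in $W$ propagates to the entire fiber.

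The delicate point I expect to be the main obstacle is exactly this last step for a general, non-faithful representation: the gauge action is transparent only when $\tilde{\pi}$ is faithful (or, after the reduction, when $\ker(\hat{\pi})$ is gauge-invariant), since otherwise it need not descend to $\tilde{\pi}(C^*(\Sigma))$ and the set of valid parameters $s$ need not be gauge-stable. I would resolve this either by verifying directly that the assignment $\gamma(y,s)\mapsto\gamma(y,z^{k}s)$ is a well-defined homeomorphism of $\Gamma(\tilde{\pi})$ preserving validity of parameters and periods---using that whether $\gamma(y,s)=\gamma(y,s')$ depends on $s,s'$ only through a common power, as visible from the Fourier-coefficient formula $\gamma(y,s)(\tilde{\pi}(a))=\sum_{m}s^{m}a(mk)(y)$---or by reducing the general statement to the faithful case. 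The remaining ingredients, namely the GNS computation and the closed-map argument, are routine.
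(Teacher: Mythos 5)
Your aperiodic case coincides with the paper's argument, and your reduction of the periodic case to the assertion ``$s=t$ is the only valid parameter with $\gamma(y,s)=\gamma(y,t)$'' is also sound. The gap is exactly at the point you flagged, and it is fatal for the route you chose. Your separating element $\tilde{\pi}(f\delta^k)$ lives \emph{downstairs}: it needs $f\in C(X_{\pi})$ with $f(y)\neq 0$ and $\supp(f)\subseteq \Per^k(\sigma_{\pi})$, hence the topological transfer $\gamma(y,t)\in\PIP(\sigma(\tilde{\pi}))\Rightarrow y\in\Per^k(\sigma_{\pi})^0$, which you prove via a gauge action on $\Gamma(\tilde{\pi})$. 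For a non-faithful $\hat{\pi}$ this is not merely delicate but breaks down: the set of valid parameters $\{s:\ker(\hat{\pi})\subseteq\ker(\tilde{\pi}_{y,s})\}$ need not be invariant under any rotation, so the assignment $\gamma(y,s)\mapsto\gamma(y,z^k s)$ is not even well defined on $\Gamma(\tilde{\pi})$. Concretely, let $X$ be a single periodic orbit of period $p$ and $\tilde{\pi}=\tilde{\pi}_{y,1}$; then $X_{\pi}=X$, $\hat{\pi}=\tilde{\pi}$, $C^*(\Sigma)\cong C(\mathbb{T},M_p)$ by Proposition~\ref{enbana4}, and $\ker(\hat{\pi})\subseteq\ker(\tilde{\pi}_{y,s})$ forces $s=1$, so $\Gamma(\tilde{\pi})$ contains $\gamma(y,1)$ but no $\gamma(y,z^p)$ with $z^p\neq 1$. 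This refutes your remedy (a) as stated. Remedy (b), reduction to the faithful case, is not an argument but a hope, and it faces a real obstruction: $\pi(C(X))'$ is in general strictly larger than $\tilde{\pi}(C(X)')$ (see the example following Corollary~\ref{intprocol4}), so elements of $\pi(C(X))'$ need not have preimages in $C(X)'$, which also undermines the Fourier-coefficient formula you invoke. (Your gauge argument does correctly prove the transfer when $\tilde{\pi}$ is faithful; but the proposition concerns arbitrary representations.)

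The paper's proof sidesteps the transfer problem entirely by building the separating element \emph{upstairs}, where the hypothesis lives. Since $\gamma(y,t)$ is interior to $\Per_k(\sigma(\tilde{\pi}))$ in $\Gamma(\tilde{\pi})$, Urysohn's lemma gives $F\in C(\Gamma(\tilde{\pi}))\cong\pi(C(X))'$ with $\supp(F)\subseteq\Per_k(\sigma(\tilde{\pi}))^0$ and $F(\gamma(y,t))\neq 0$; the support condition makes $Fu^k$ commute with $\pi(C(X))$, because $\sigma(\tilde{\pi})^{-k}$ is the identity on $\supp(F)$, and then the right-multiplicative-domain computation (Theorem~\ref{choi4}) yields $\psi_{y,s}(Fu^k)=F(\gamma(y,s))\,s$, whence $s=t$. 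In other words, replacing your function $f\delta^k$ (supported in $\Per^k(\sigma_{\pi})\subseteq X_{\pi}$) by $Fu^k$ (with $F$ supported in $\Per_k(\sigma(\tilde{\pi}))^0\subseteq\Gamma(\tilde{\pi})$) removes the need for the topological transfer and the gauge action altogether; this one change is what separates your attempt from a complete proof.
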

\begin{proof}
As mentioned above, points in $\Per^{\infty}(\sigma(\tilde{\pi}))$ must be of the form $\gamma(x)$ where $x \in \Per^{\infty} (\sigma_{\pi})$. By definition, there is a $\psi_x \in \Phi(\tilde{\pi})$ such that $\psi_x \circ \hat{\pi} = \varphi_x$. 
As $\varphi_x$ is the unique pure state extension of $\mu_x$ to $C^* (\Sigma_{\pi})$ we see that, if there were another extension $\psi \in \Phi(\tilde{\pi})$, then $\psi \circ \hat{\pi} = \psi_x \circ \hat{\pi}$, and thus $\psi = \psi_x$. 
Now consider the set $\Per_k(\sigma(\tilde{\pi}))^0$ for some positive integer $k$ and suppose that the pure
 states $\psi_{y,t}$ and $\psi_{y,s}$ induce the same point there, i.e., $\gamma(y,t) = \gamma(y,s) \in \Per_k(\sigma(\tilde{\pi}))^0$. Let $F \in C(\Gamma(\tilde{\pi}))$ be a continuous function
 whose support is contained in that interior such that $F(\gamma(y,t))$
 is 
not zero. Then the element $Fu^k$ is an element of $\pi(C(X))'$. To see this, let $\pi(g) \in \pi(C(X))$ and note that the condition on the support of $F$ implies that $F u^k \pi(g) = F u^k \pi(g) u^{*k} u^k = F \pi(g) \circ \sigma(\tilde{\pi})^{-k} u^k = \pi(g) F u^k$.
Since $F$ belongs to the right multiplicative domain for both $\psi_{y,t}$ and $\psi_{y,s}$ by Theorem~\ref{choi4}, and since $\Per(y) = \Per(\gamma(y,t)) = k$, we
have
\begin {eqnarray*}
Fu^k (\gamma(y,t))  &=& \psi_{y,t}(Fu^k) = \psi_{y,t}(F)\psi_{y,t}(u^k) =
 F(\gamma(y,t)) \psi_{y,t} \circ \hat{\pi} (\delta_{\pi}^k)\\ &= & F(\gamma(y,t)) \varphi_{y,t} (\delta_{\pi}^k)
             = F(\gamma(y,t))t.
\end{eqnarray*}
On the other hand, we conclude in the same fashion that $Fu^k (\gamma(y,s)) = F(\gamma(y,s))s$
and hence that $t = s$. Thus $\psi_{y,t} = \psi_{y,s}$. When the $y$-components are different, say as $y_1$ and $y_2$, we can easily separate $\psi_{y_1,t}$ and $\psi_{y_2,s}$ by functions in $C(X_{\pi})$. Thus all elements of $\Per_k(\sigma(\tilde{\pi}))^0$ are uniquely extended to elements of $\Phi(\tilde{\pi})$.
\end{proof}
Note that continuity of restriction maps implies that \[\{\gamma(y,t) \in \Gamma(\tilde{\pi}) : y \in \PIP(\sigma_{\pi})\} \subseteq \PIP(\sigma(\tilde{\pi})),\]
and clearly $\{\gamma(x) \in \Gamma(\tilde{\pi}) : x \in \Per^{\infty} (\sigma_{\pi})\} = \Per^{\infty} (\sigma(\tilde{\pi}))$.
\section{Ideal intersection property of $C(X)'$ and $\pi(C(X))'$}
In the theory of $C^*$-crossed products, an important direction of research is 
the investigation of how the structure of closed ideals of a crossed product reflects
 its building block $C^*$-subalgebra, which in our case is $C(X)$. 
Theorem~\ref{triquivc4} sheds some light upon this question.
It tells us that all non-zero closed ideals have non-zero intersection with $C(X)$ 
precisely when the system is topologically free. Hence for
 a dynamical system that is not topologically free, for example rational
 rotation of the unit circle, there always exists a non-zero closed ideal whose intersection with $C(X)$ is zero.

In this section we analyse the situation for general
 dynamical systems. Since Theorem~\ref{triquivc4} tells us that $C(X)$ has non-zero intersection with every non-zero closed ideal precisely when $C(X) = C(X)'$, and since $C(X) \subseteq C(X)'$ in general, it appears natural to investigate what ideal intersection properties $C(X)'$ has for an arbitrary system.
In \cite[Theorem 6.1]{SSD24} it is proved that in an algebraic crossed product the commutant of the subalgebra corresponding to $C(X) \subseteq C^* (\Sigma)$ always has non-zero intersection with every non-zero ideal of the crossed product. In \cite[Theorem 3.1]{SSD34} a generalization of that result with a more elementary proof is provided.

We make the following definition.

\begin{defn}\label{defintpro4}Let $A$ be a unital $C^*$-algebra and $B$ be a commutative
 $C^*$-subalgebra of $A$ with the same unit.

\begin{enumerate}
 \item $B$ is said to have the intersection property for closed ideals of
 $A$ if for any non-zero closed ideal $I$ of $A$, $I \cap B \neq \{0\}$
\item $B$ is said to have the intersection property for ideals of $A$ if
 for any non-zero ideal $J$ of $A$, not necessarily closed or self-adjoint, $J \cap B \neq \{0\}$.
\end{enumerate}
\end{defn}

\begin{pro}\label{sammaskar4}
The above two properties for $B$ are equivalent.
\end{pro}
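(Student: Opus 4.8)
The implication (ii) $\Rightarrow$ (i) is immediate: every closed two-sided ideal of a $C^*$-algebra is in particular a two-sided ideal of the kind considered in (ii), so if $B$ meets every nonzero such ideal it certainly meets every nonzero closed one. The content of the proposition is therefore the converse, (i) $\Rightarrow$ (ii), and my plan is to assume that $B$ has the intersection property for closed ideals and to deduce that an arbitrary nonzero ideal $J$ (neither closed nor self-adjoint a priori) satisfies $J \cap B \neq \{0\}$.

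First I would pass to the norm-closure. Since $J \neq \{0\}$, its closure $\overline{J}$ is a nonzero closed two-sided ideal, hence self-adjoint, and by hypothesis (i) there is a nonzero $b_0 \in \overline{J} \cap B$. Replacing $b_0$ by $b := b_0^* b_0$, which still lies in $B$ (a $C^*$-subalgebra, closed under adjoint and product) and in $\overline{J}$ (a self-adjoint closed ideal), I obtain a positive element $b \in \overline{J} \cap B$ with $b \geq 0$ and $\|b\| = \|b_0\|^2 > 0$. The difficulty is now visible: $b$ lies only in the \emph{closure} of $J$, and the fact that functional-calculus elements of $b$ lie in $\overline{J}$ does not by itself place them in $J$.

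To bridge this gap I would manufacture a genuine positive element of $J$ close to a fixed positive power of $b$. Choosing $c \in J$ with $\|c - b\|$ as small as desired and setting $q := c\,b\,c^*$, the two-sided ideal property gives $q \in J$, clearly $q \geq 0$, and a routine estimate (using $b = b^*$, so $\|c^* - b\| = \|c - b\|$) shows that $\|q - b^3\|$ can be made smaller than any prescribed $\eta > 0$. Now I invoke the main tool, the standard perturbation lemma for positive elements in a $C^*$-algebra: if $x,y \geq 0$ and $\|x - y\| \leq \eta$, then $(x - \eta)_+ = d\,y\,d^*$ for some contraction $d \in A$. Applying this with $x = b^3$ and $y = q$ yields $(b^3 - \eta)_+ = d\,q\,d^*$, which lies in $J$ because $q \in J$ and $J$ is a two-sided ideal, while simultaneously $(b^3 - \eta)_+$ is a continuous function of $b$ and hence belongs to the $C^*$-subalgebra $B$. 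Choosing $0 < \eta < \|b\|^3$ guarantees $(b^3 - \eta)_+ \neq 0$, so this element is a nonzero member of $J \cap B$, completing the argument.

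I expect the only genuine obstacle to be exactly the step just described: the transition from the easily obtained element of $\overline{J} \cap B$ to an honest element of $J \cap B$. Mere norm approximation is insufficient, and it is the perturbation lemma — which produces the \emph{exact} identity $(b^3 - \eta)_+ = d\,q\,d^*$ rather than an approximation — that does the work, since its right-hand side is visibly in $J$ while its left-hand side is visibly in the commutative subalgebra $B$. Everything else (the closure step, the reduction to a positive element, and the norm estimate for $q$) is routine.
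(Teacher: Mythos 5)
Your proof is correct, but it takes a genuinely different route from the paper's. The paper identifies $B$ with $C(Y)$, writes $B \cap \bar{J} = \ker(Z)$ for a proper closed subset $Z \subseteq Y$, picks a positive $g \in B \cap \bar{J}$ with $g(p)=1$ at some $p \notin Z$, and applies a function $f$ supported in $(0,\infty)$: by Pedersen's characterization of the minimal dense ideal, $f \circ g$ lies in the Pedersen ideal $P_{\bar{J}}$, which is contained in $J$ because $J$ is a dense two-sided ideal of $\bar{J}$; since $f\circ g \in B$ and $(f\circ g)(p) = 1$, this finishes the proof. You reach the same kind of element --- a functional-calculus cut-down of a positive element of $B \cap \bar{J}$ at a strictly positive level --- but you certify its membership in $J$ by a different tool: instead of Pedersen-ideal theory you approximate $b \in B \cap \bar{J}$ by $c \in J$, form $q = cbc^* \in J$, and invoke the Kirchberg--R{\o}rdam perturbation lemma to get the exact identity $(b^3 - \eta)_+ = dqd^*$ with $d$ a contraction, so that this nonzero element lies in $J \cap B$. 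What each approach buys: the paper's argument is essentially one line once one accepts the (nontrivial) theory of the minimal dense ideal; yours is self-contained modulo the perturbation lemma, which has a short elementary proof, and moreover never uses commutativity of $B$, so it proves the statement for an arbitrary $C^*$-subalgebra. Both proofs correctly isolate, and get around, the same obstruction you flag: mere norm approximation of an element of $\bar{J} \cap B$ by elements of $J$ proves nothing, and one needs an exact algebraic identity.
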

\begin{proof} It is enough to show that (i) implies (ii). We let $Y$ be the
 spectrum of $B$ and identify $B$ with $C(Y)$. Let $J$ be a non-zero ideal of $A$. 
By assumption, $B\cap \bar{J} \neq \{0\}$. Since $B \cap \bar{J}$ is then a non-zero closed ideal of $B$, we may, under the identification $B \cong C(Y)$, write $B \cap \bar{J} = \ker(Z)$ for some closed subset $Z \subsetneq Y$. Take a point $p \in Y \setminus Z$ and a positive function $g \in C(Y)$ vanishing on $Z$ and such that $g(p) = 1$.
Let $f \in C_c (\mathbb{R})$ be such that $f$ vanishes on $(-\infty, \frac{1}{2}]$ and $f(1) = 1$. Then $f_{\upharpoonright (0, \infty)} \in C_c ((0,\infty))$ and by the characterization of the minimal dense ideal  (also known as the Pedersen ideal) $P_{\bar{J}}$ of $\bar{J}$ in  \cite[Theorem 5.6.1]{P34} (see also \cite{P14},\cite{P24}), we have that $f (g) = f \circ g \in P_{\bar{J}}$. Since $P_{\bar{J}} \subseteq J$, this implies that $f \circ g  \in J$ and since clearly also $f \circ g \in B$ we are done.
\end{proof}
\begin{thm}\label{intpro4} Given a representation $\tilde{\pi}$, if the family
 $\Phi'(\tilde{\pi})$ is total on $\tilde{\pi}(C^*(\Sigma))$, then the algebra
 $\pi(C(X))'$
has the intersection property for ideals in $\tilde{\pi}(C^*(\Sigma))$.
\end{thm}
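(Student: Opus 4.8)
The plan is to invoke Proposition~\ref{sammaskar4} to reduce everything to the intersection property for \emph{closed} ideals, and then, for a fixed non-zero closed ideal $I$ of $\tilde{\pi}(C^*(\Sigma))$, to manufacture a non-zero element of $I \cap \pi(C(X))'$ by a localized averaging argument built around a single pure state that does not vanish on $I$. As in the proof of Theorem~\ref{comcom4}~(ii), I would first reduce to the case $\tilde{\pi} = \hat{\pi}$, so that $\tilde{\pi}(C^*(\Sigma)) = \hat{\pi}(C^*(\Sigma_{\pi}))$ with $X = X_{\pi}$, $\sigma = \sigma_{\pi}$, and $\pi(C(X))' \cong C(\Gamma(\tilde{\pi}))$ is maximal abelian and invariant under $\Ad u$, with induced homeomorphism $\sigma(\tilde{\pi})$. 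I then pick a positive $a \in I$ with $a \neq 0$; by totality of $\Phi'(\tilde{\pi})$ there is $\psi \in \Phi'(\tilde{\pi})$ with $\psi(a) =: c > 0$, and $\psi$ is the, by Proposition~\ref{unutv4} unique, extension of a character $\gamma \in \Per^{\infty}(\sigma(\tilde{\pi})) \cup \PIP(\sigma(\tilde{\pi}))$.

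The core of the argument is to compress $a$ by a function $F \in \pi(C(X))'_+ \cong C(\Gamma(\tilde{\pi}))_+$ with $\|F\| = 1$ and $F(\gamma) = 1$, supported in a small neighbourhood $U$ of $\gamma$. Since $\gamma$ is a character, $F$ lies in the multiplicative domain of $\psi$ by Theorem~\ref{choi4} and the self-adjointness of $F$, so that $\psi(FaF) = F(\gamma)^2 \psi(a) = c$ \emph{exactly}, independently of the choice of $U$. The neighbourhood $U$ is chosen so that the relevant translates $\sigma(\tilde{\pi})^n(U)$ are disjoint from $U$: in the aperiodic case for $0 < |n| \leq N$ (possible because $\sigma(\tilde{\pi})^n(\gamma) \neq \gamma$ for all $n \neq 0$), and in the periodic interior case, where $\gamma \in \Per_k(\sigma(\tilde{\pi}))^0$, by taking $U \subseteq \Per_k(\sigma(\tilde{\pi}))^0$ and requiring disjointness for $|n| \leq N$ with $n \not\equiv 0 \pmod{k}$. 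Approximating $a$ by a generalized polynomial $p = \sum_{|n| \leq N} \pi(f_n) u^n$ through the Ces\`{a}ro sums of Proposition~\ref{cesar4}, the disjointness makes $F u^n F = F(u^n F u^{-n})u^n$ vanish for the off-diagonal $n$, so that $FpF$ reduces to $\pi(f_0)F^2$ (aperiodic case) or to $\sum_m \pi(f_{mk})F^2 u^{mk}$ (periodic case). Exactly as in the computation in the proof of Proposition~\ref{unutv4}, each surviving summand lies in $\pi(C(X))'$, so $h := FpF \in \pi(C(X))'$, while $FaF \in I$ and $\|FaF - h\| = \|F(a-p)F\| < \epsilon$.

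Finally I would pass to the quotient map $q : \tilde{\pi}(C^*(\Sigma)) \to \tilde{\pi}(C^*(\Sigma))/I$. Since $FaF \in I$ we obtain $\|q(h)\| \leq \|h - FaF\| < \epsilon$, whereas $\|h\| \geq |\psi(h)| \geq c - \epsilon$ because $|\psi(h) - \psi(FaF)| < \epsilon$ and $\psi(FaF) = c$. Choosing $\epsilon < c/2$ gives $\|q(h)\| < \|h\|$, so the restriction of $q$ to $\pi(C(X))'$ is a $*$-homomorphism between $C^*$-algebras that fails to be isometric, hence fails to be injective; its kernel is precisely $I \cap \pi(C(X))'$, which is therefore non-zero.

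I expect the main obstacle to be the aperiodic case: there the orbit of $\gamma$ under $\sigma(\tilde{\pi})$ may return arbitrarily close to $\gamma$, so one cannot annihilate all off-diagonal terms simultaneously and must settle for the finite truncation $p$ together with a norm estimate. The two decisive points that turn this approximate scheme into an exact element of the intersection are, first, that $\psi(FaF) = c$ holds \emph{exactly} (via the multiplicative domain), yielding a lower bound on $\|h\|$ uniform in $\epsilon$, and second, the reformulation of the conclusion as non-injectivity of $q|_{\pi(C(X))'}$, which sidesteps any need for a global conditional expectation onto $\pi(C(X))'$. In the periodic interior case the very same construction in fact produces $FaF \in I \cap \pi(C(X))'$ directly, with no approximation required.
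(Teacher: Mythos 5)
Your proof is correct, but it takes a genuinely different route from the paper's. The paper argues by contraposition entirely at the level of states: assuming $I \cap \pi(C(X))' = \{0\}$, the quotient map $q$ restricts to an isomorphism of $\pi(C(X))'$ onto $q(\pi(C(X))')$, so every character in $\Gamma(\tilde{\pi})$ factors through $q$ and extends to a pure state of the quotient algebra; pulling back along $q$ produces pure state extensions that vanish on $I$, and the \emph{uniqueness} statement of Proposition~\ref{unutv4} then forces every member of $\Phi'(\tilde{\pi})$ to vanish on $I$, whence $I = \{0\}$ by totality. Your argument runs in the opposite direction: totality is used at the outset to find a single $\psi \in \Phi'(\tilde{\pi})$ with $\psi(a) = c > 0$ for a positive $a \in I$, and the conclusion is manufactured by hand via the compression $FaF$, the multiplicative domain (Theorem~\ref{choi4} applied to the self-adjoint $F$ with $\psi(F^2) = \psi(F)^2 = 1$), disjointness of the translates $\sigma(\tilde{\pi})^n(U)$, and Ces\`{a}ro approximation; the final step correctly converts the strict norm drop $\|q(h)\| < \|h\|$ into non-injectivity of $q_{\upharpoonright \pi(C(X))'}$, whose kernel is $I \cap \pi(C(X))'$. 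Notably, you never need the uniqueness content of Proposition~\ref{unutv4} (you cite it, but only the fact that members of $\Phi'(\tilde{\pi})$ restrict to characters in $\Per^{\infty}(\sigma(\tilde{\pi})) \cup \PIP(\sigma(\tilde{\pi}))$ is used); instead you effectively inline the localization technique from its proof, since the observation that $\pi(f)F^2u^{mk} \in \pi(C(X))'$ when $\supp(F) \subseteq \Per_k(\sigma(\tilde{\pi}))^0$ is exactly the trick used there. What the paper's route buys is brevity, by re-using Proposition~\ref{unutv4} as a black box; what yours buys is a more constructive and self-contained argument: in the periodic-interior case it exhibits an explicit non-zero element $FaF$ of $I \cap \pi(C(X))'$ (indeed, since points of $U \subseteq \Per_k(\sigma(\tilde{\pi}))^0$ have $\sigma(\tilde{\pi})^n(U) = \sigma(\tilde{\pi})^{n \bmod k}(U)$, finitely many disjointness conditions kill \emph{all} off-diagonal terms, so $FaF \in \pi(C(X))'$ with no truncation), while in the aperiodic case your truncation-plus-estimate correctly handles the recurrence obstruction you identify.
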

\begin{proof} By Proposition~\ref{sammaskar4}, it is enough to consider closed ideals. Suppose there exists a closed ideal $I$ that has trivial
 intersection with $\pi(C(X))'$. We shall show that \(I = \{0\}\). Let
 $q$ be the 
quotient map from $\tilde{\pi}(C^*(\Sigma))= \hat{\pi}(C^*(\Sigma_{\pi}))$
 to the quotient algebra $\tilde{\pi}(C^*(\Sigma))/I$. Then it induces an
 isomorphism 
between $\pi(C(X))'$ 
and $q(\pi(C(X))')$. This implies that every pure state $\varphi \in \Gamma (\tilde{\pi})$ on $\pi(C(X))'$ is of the form ${\xi \circ q}_{\upharpoonright \pi(C(X))'} $, where $\xi$ is a pure state of $q(\pi(C(X))')$. Let $\tilde{\xi}$ be a pure state extension of $\xi$ to $q(\tilde{\pi} (C^* (\Sigma))$. Clearly, $\tilde{\varphi} = \tilde{\xi} \circ q$ is a pure state extension of $\varphi$ to $\tilde{\pi}(C^*(\Sigma))$.
Since pure state extensions of elements of $\Per^{\infty}(\tilde{\sigma}) \cup \PIP(\tilde{\sigma}) \subseteq \Gamma(\tilde{\pi})$ to $\tilde{\pi}(C^*(\Sigma))$ are unique by Proposition~\ref{unutv4}, it follows that every element of $\Phi' (\tilde{\pi})$ factors through $q$ and hence vanishes on $I$. As $\Phi' (\tilde{\pi})$ was assumed to be total, we conclude that $I = \{0\}$.
\end{proof}
\begin{cor}\label{intprocol4}
$C(X)'$ has the intersection property for ideals of $C^* (\Sigma)$.
\end{cor}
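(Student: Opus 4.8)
The plan is to deduce the corollary from Theorem~\ref{intpro4} by applying it to a \emph{faithful} representation $\tilde{\pi} = \pi \times u$ of $C^*(\Sigma)$, which exists since every $C^*$-algebra admits one. For such a representation $\hat{\pi}$ is an isomorphism and $\tilde{\pi}(C^*(\Sigma)) \cong C^*(\Sigma)$, with $\pi(C(X))'$ corresponding to $C(X)'$ and the (not necessarily closed or self-adjoint) ideals in bijective correspondence in an intersection-preserving way. Hence, once we know that $\pi(C(X))'$ has the intersection property for ideals of $\tilde{\pi}(C^*(\Sigma))$, the conclusion for $C(X)'$ in $C^*(\Sigma)$ is immediate. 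By Theorem~\ref{intpro4} it therefore suffices to verify that the family $\Phi'(\tilde{\pi})$ is total on $\tilde{\pi}(C^*(\Sigma))$.

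First I would record that, because $\tilde{\pi}$ is faithful, $\pi$ is injective on $C(X)$, so $\pi^{-1}(0) = 0$ and thus $X_{\pi} = h(\pi^{-1}(0)) = X$ and $\sigma_{\pi} = \sigma$. Consequently $\Sigma_{\pi} = \Sigma$, the map $\hat{\pi}$ is an isomorphism $C^*(\Sigma_{\pi}) = C^*(\Sigma) \to \tilde{\pi}(C^*(\Sigma))$, and under this identification the system $(\Gamma(\tilde{\pi}), \sigma(\tilde{\pi}))$ is $(\Gamma, \tilde{\sigma})$. In particular, since $\ker(\hat{\pi}) = 0 \subseteq \ker(\tilde{\pi}_{y,t})$ for every $y$ and $t$, Lemma~\ref{vilkafinns4} guarantees that $\gamma(y,t) \in \Gamma(\tilde{\pi})$ for all $y \in \Per(\sigma)$ and all $t \in \mathbb{T}$.

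The crux is the inclusion $\Phi' \subseteq \Phi'(\tilde{\pi})$, where $\Phi'$ is viewed inside $\tilde{\pi}(C^*(\Sigma))$ via the faithful $\hat{\pi}$; once this is established, totality of $\Phi'(\tilde{\pi})$ follows at once, for $\Phi'$ is total on $C^*(\Sigma)$ by Lemma~\ref{total4} and any family containing a total family is itself total. To prove the inclusion I would take each member of $\Phi'$ and exhibit it as a pure state extension of a point of $\Per^{\infty}(\sigma(\tilde{\pi})) \cup \PIP(\sigma(\tilde{\pi}))$. For $x \in \Per^{\infty}(\sigma)$ the state $\varphi_x$ restricts on $C(X)'$ to $\gamma(x)$, which lies in $\Per^{\infty}(\sigma(\tilde{\pi}))$ by the remarks following Proposition~\ref{unutv4}; hence $\varphi_x$ is a pure state extension of a point of $\Per^{\infty}(\sigma(\tilde{\pi}))$ and so belongs to $\Phi'(\tilde{\pi})$. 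For $y \in \PIP(\sigma)$ and $t \in \mathbb{T}$, the state $\varphi_{y,t}$ restricts on $C(X)'$ to $\gamma(y,t)$, and the same remarks give $\gamma(y,t) \in \PIP(\sigma(\tilde{\pi}))$; thus $\varphi_{y,t} \in \Phi'(\tilde{\pi})$. This exhausts $\Phi'$ and yields the inclusion, whence $\Phi'(\tilde{\pi})$ is total and Theorem~\ref{intpro4} applies.

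I expect the only real (and modest) obstacle to be the bookkeeping of the three layers of states in play: the point evaluations on $C(X)$, their extensions $\varphi_x, \varphi_{y,t}$ to $C^*(\Sigma)$, and the characters $\gamma(x), \gamma(y,t)$ on $C(X)'$ whose pure state extensions constitute $\Phi'(\tilde{\pi})$. One must invoke precisely the right facts --- that $\gamma(x) \in \Per^{\infty}(\sigma(\tilde{\pi}))$ and $\{\gamma(y,t) : y \in \PIP(\sigma)\} \subseteq \PIP(\sigma(\tilde{\pi}))$ --- to place the restricted characters inside $\Per^{\infty}(\sigma(\tilde{\pi})) \cup \PIP(\sigma(\tilde{\pi}))$. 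The totality conclusion, and the transfer of the intersection property from $\pi(C(X))'$ back to $C(X)'$ through the faithful representation, are then purely formal.
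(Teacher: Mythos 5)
Your proposal is correct and takes essentially the same route as the paper: the paper's own (one-line) proof is exactly to take a faithful representation and apply Lemma~\ref{total4} (the totality of $\Phi'$) so that Theorem~\ref{intpro4} can be invoked. Your write-up merely makes explicit the bookkeeping the paper leaves implicit, namely that $\Sigma_{\pi} = \Sigma$ for a faithful representation and that $\Phi' \subseteq \Phi'(\tilde{\pi})$ via the remarks following Proposition~\ref{unutv4}.
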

\begin{proof}
Consider a faithful representation of $C^* (\Sigma)$ and apply Lemma~\ref{total4}.
\end{proof}
The following example shows that it can happen that $\tilde{\pi}(C(X)') \subsetneq \pi(C(X))'$ and that the former does not necessarily have the intersection property for ideals.
\begin{ex}
We consider again the dynamical system mentioned in Section~\ref{notprel4}. 
Namely, let $X = [0,1]\times [-1,1] \subseteq \mathbb{R}^2$ be endowed with the standard topology and let $\sigma$ be the homeomorphism of $X$ defined as reflection in the $x$-axis. Using the notation $\Sigma_{[0,1]} = ([0,1], \sigma_{\upharpoonright [0,1]})$ and noting that $\sigma_{\upharpoonright [0.1]}$ is just the identity homeomorphism,  we consider the map from $C^* (\Sigma)$ to $C^* (\Sigma_{[0,1]})$ defined on the algebraic part of $C^* (\Sigma)$ as $\sum_{n} f_n \delta^n \mapsto \sum_{n} {f_n}_{\upharpoonright [0,1]} \delta_{[0,1]}^n$, where $\delta_{[0,1]}$ denotes the canonical unitary element of $C^* (\Sigma_{[0,1]})$. It cleary extends by continuity to a surjective homomorphism $\tilde{\pi} = \pi \times u : C^*(\Sigma) \rightarrow C^*(\Sigma_{[0,1]})$. By Proposition~\ref{commbesk4} (i), $C(X)'$ is the $C^*$-subalgebra of $C^*(\Sigma)$ generated by $C(X)$ and $\delta^2$. We also note that $C^*(\Sigma_{[0,1]}) \cong C([0,1] \times \mathbb{T})$. As $\pi(C(X)) = C([0,1])$, and $C^*(\Sigma_{[0,1]})$ is commutative, we see that $\pi(C(X))' = C^*(\Sigma_{[0,1]})$, while $\tilde{\pi}(C(X)')$ only contains elements $a \in C^*(\Sigma_{[0,1]})$ with $a(j) = 0$ for odd integers $j$. Hence $\tilde{\pi}(C(X)') \subsetneq \pi(C(X))'$.
To see that $\tilde{\pi}(C(X)')$ does not have the intersection property for ideals, consider the ideal $I \subseteq C([0,1]\times \mathbb{T})$ consisting of all functions that vanish on $[0,1] \times C$, where $C \subseteq \mathbb{T}$ is the closed upper halfcircle. As $\tilde{\pi}(C(X)')$ is identified with the $C^*$-subalgebra of $C([0,1] \times \mathbb{T})$ generated by $C([0,1])$ and $z^2$ it is clear that $\tilde{\pi}(C(X)') \cap I = \{0\}$.
\end{ex}

\section{Intermediate subalgebras}\label{intsub4}
As we now know that $C(X)'$  always has the intersection property for ideals in $C^*(\Sigma)$, while $C(X)$ has it if and only if $C(X) = C(X)'$ by Theorem~\ref{triquivc4}, we shall consider $C^*$-subalgebras $B$ with $C(X) \subseteq B \subseteq C(X)'$ and in particular investigate their intersection properties with ideals. We call such a $C^*$-subalgebra an intermediate subalgebra. In \cite[Section 5]{SSD34} intermediate subalgebras are studied in the context of an algebraic crossed product.
The following proposition gives an abstract characterization of intermediate subalgebras which have the intersection property, and consequently of those who do not, in terms of the relation of their spectra to the spectrum of $C(X)'$.
\begin{pro}\label{specint4}
Suppose $B \subseteq C^*(\Sigma)$ is a $C^*$-subalgebra such that $C(X) \subseteq B \subseteq C(X)'$ and denote by $\Delta$ and $\Gamma$ the spectrum of $B$ and $C(X)'$, respectively. Then $B$ has the intersection property for ideals if and only if for every proper closed subset $S \subseteq \Gamma$ that is invariant under $\widetilde{\sigma}$ and its inverse, the restriction of $S$ to $B$ is a proper subset of $\Delta$.
\end{pro}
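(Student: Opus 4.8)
The plan is to work entirely with closed ideals, which is legitimate by Proposition~\ref{sammaskar4}, and to recast the statement in the commutative, spectral picture. Identifying $C(X)' \cong C(\Gamma)$ and $B \cong C(\Delta)$, the inclusion $B \subseteq C(X)'$ is dual to a continuous surjection $r : \Gamma \to \Delta$; since $\Gamma$ is compact, $r$ sends closed sets to closed sets, so for a closed $S \subseteq \Gamma$ the restriction of $S$ to $B$ is exactly $r(S)$, a closed subset of $\Delta$. Writing $\mathcal{I}_S = \{h \in C(X)' : h_{\upharpoonright S} = 0\}$ for the closed ideal of $C(X)'$ with hull $S$, the key bookkeeping identity is that, under $B \cong C(\Delta)$, the intersection $\mathcal{I}_S \cap B$ corresponds to $\{g \in C(\Delta) : g_{\upharpoonright r(S)} = 0\}$; hence $\mathcal{I}_S \cap B \neq \{0\}$ precisely when $r(S) \subsetneq \Delta$. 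With this dictionary the two implications become tractable.

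For the direction assuming the spectral condition, I would start from an arbitrary non-zero closed ideal $I$ of $C^*(\Sigma)$. As $\delta$ is a unitary of $C^*(\Sigma)$ and $I$ is two-sided, $I$ is invariant under $\Ad \delta$ and $\Ad \delta^*$; combined with the $\Ad \delta$- and $\Ad \delta^*$-invariance of $C(X)'$ from Theorem~\ref{comcom4}, this makes $I \cap C(X)'$ an ideal of $C(X)'$ invariant under $\Ad \delta$ and its inverse. By Corollary~\ref{intprocol4} it is non-zero, so its hull $S \subsetneq \Gamma$ is a proper closed subset invariant under $\widetilde{\sigma}$ and its inverse, and $I \cap C(X)' = \mathcal{I}_S$. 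The hypothesis then forces $r(S) \subsetneq \Delta$, so $I \cap B = \mathcal{I}_S \cap B \neq \{0\}$, which is the intersection property.

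The reverse direction carries the real work. Given a proper closed subset $S \subsetneq \Gamma$ invariant under $\widetilde{\sigma}$ and its inverse, I would let $I$ be the closed ideal of $C^*(\Sigma)$ generated by $\mathcal{I}_S$, which is non-zero since $\mathcal{I}_S \neq \{0\}$. The crux is to show that $I$ reaches no further into the commutant than $\mathcal{I}_S$ itself, that is, $I \cap C(X)' \subseteq \mathcal{I}_S$. Once this is in hand, the intersection property applied to $I$ yields $\{0\} \neq I \cap B \subseteq \mathcal{I}_S \cap B$, whence $r(S) \subsetneq \Delta$ by the dictionary above, completing the argument.

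To establish $I \cap C(X)' \subseteq \mathcal{I}_S$, fix $\gamma \in S$ and extend it (Hahn--Banach) to a state $\psi$ of $C^*(\Sigma)$ with $\psi_{\upharpoonright C(X)'} = \gamma$; it suffices to show $\psi$ annihilates $I$, for then every $a \in I \cap C(X)'$ satisfies $\gamma(a) = \psi(a) = 0$ and so vanishes on all of $S$. Here I would run the right multiplicative domain machinery exactly as in the proof of Lemma~\ref{total4}. Since $\psi_{\upharpoonright C(X)}$ is the character $\mu_x$ with $x = \gamma_{\upharpoonright C(X)}$, every $f \in C(X)$ lies in the right multiplicative domain of $\psi$ by Theorem~\ref{choi4}. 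A dense set of generators of $I$ has the form $f \delta^m a \delta^n g = f\,(\delta^m a \delta^{*m})\,\delta^{m+n} g$ with $f, g \in C(X)$ and $a \geq 0$ in $\mathcal{I}_S$; the positive element $b = \delta^m a \delta^{*m}$ satisfies $\psi(b) = (\gamma \circ \Ad \delta^m)(a)$, and as $\gamma \circ \Ad \delta^m$ is again a point of $S$ (invariance of $S$ under all powers of $\widetilde{\sigma}$) while $a$ vanishes on $S$, we get $\psi(b) = 0$; being positive, $b$ then lies in the right multiplicative domain of $\psi$ by the remark following Theorem~\ref{choi4}. Consequently $\psi(f b \delta^{m+n} g) = \psi(f)\psi(b)\psi(\delta^{m+n}g) = 0$, so $\psi$ vanishes on each generator and, by continuity, on $I$. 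The main obstacle is exactly this inclusion $I \cap C(X)' \subseteq \mathcal{I}_S$: a priori the generated ideal $I$ might contain elements of $C(X)'$ outside $\mathcal{I}_S$, and it is the $\widetilde{\sigma}$-invariance of $S$ --- guaranteeing $\psi(\delta^m a \delta^{*m}) = 0$ for every $m$ --- that rules this out and makes the multiplicative-domain computation close.
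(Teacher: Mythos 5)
Your proposal is correct and takes essentially the same route as the paper's proof: reduction to closed ideals via Proposition~\ref{sammaskar4}, the key identity $I \cap C(X)' = h(S)$ for the closed ideal generated by $h(S)$ (proved by a right-multiplicative-domain computation that exploits the $\widetilde{\sigma}$-invariance of $S$), the hull/restriction dictionary $I \cap B = h(r_B(S))$, and Corollary~\ref{intprocol4} together with the $\Ad\delta$-invariance of $I \cap C(X)'$ for the other direction. The only cosmetic differences are that you extend a character $\gamma \in S$ to a state of $C^*(\Sigma)$ by Hahn--Banach where the paper uses the pure state extensions $\varphi_x, \varphi_{y,t}$ supplied by Theorem~\ref{comcom4}(ii), and that you place the conjugated element $\delta^m a \delta^{*m}$ in the right multiplicative domain via positivity rather than simply noting it lies in $C(X)'$, on which your state is already multiplicative.
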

\begin{proof}
By Proposition~\ref{sammaskar4}, it is enough to consider closed ideals.
Let $S$ be a proper closed subset of $\Gamma$ that is invariant under $\tilde{\sigma}$ and its inverse. Let $I$ be the closed ideal of $C^*(\Sigma)$ generated by the hull $h(S)$ of $S$, defined by $h(S) = \{a \in C(X)' : s(a) = 0 \textup{ for all } s \in S\}$.
We first assert that $I \cap C(X)' = h(S)$. Note that $I$ is the closed linear span of elements of the form $f\delta^i F g\delta^j =
 f (\delta^i Fg \delta^{* i})\ \delta^{i+j}$, where $F \in h(S)$. As $Fg \in C(X)'$, we can rewrite this as $f (\delta^i Fg \delta^{* i})\ \delta^{i+j} = f  (Fg) \circ \tilde{\sigma}^{-i} \delta^{i+j}$.
  Let $\varphi_x$ and $\varphi_{y,t}$ be pure
 state extensions of points $\gamma(x)$ and $\gamma(y,t)$ in $S$.
 Since the element $f  (Fg) \circ \tilde{\sigma}^{-i}$ belongs to $C(X)'$, and hence to the right  multiplicative
 domain for $\varphi_x$ and $\varphi_{y,t}$ by Theorem~\ref{choi4}, and to $h(S)$ by invariance of $S$ under $\tilde{\sigma}$ and its inverse, we have
\[
\varphi_x(f\delta^i F g\delta^j) =
 \varphi_x(f  (Fg) \circ \tilde{\sigma}^{-i})\varphi_x(\delta^{i+j}) = 0,
\]
and also
\[
\varphi_{y,t}(f\delta^i F g\delta^j)
 =\varphi_{y,t}(f  (Fg) \circ \tilde{\sigma}^{-i})\varphi_{y,t}(\delta^{i+j}) = 0.
\]
Therefore, all pure states $\varphi_x$ and $\varphi_{y,t}$ vanish on
 $I$, and thus $I \cap C(X)' \subseteq h(S)$ and we conclude that $I \cap C(X)' = h(S)$. Denoting by $r_B : \Gamma \rightarrow \Delta$ the restriction map, we see that $I \cap B =
 h(r_B(S))$. So if $B$ has the intersection property, then $h(r_B(S)) \neq \{0\}$, which implies $r_B (S) \subsetneq \Delta$. On the other hand,
 suppose that $B$ does not have the intersection property and hence that there exists a nonzero closed ideal $I$ that has trivial intersection with $B$. Since $C(X)'$ itself has the intersection property by Corollary~\ref{intprocol4}, and since it is easy to see that $I \cap C(X)'$ is a closed ideal of $C(X)'$ invariant under $\Ad \delta$ and its inverse, we may write $I \cap C(X)' = h(S)$ for some proper closed subset $S$ of $\Gamma$ that is invariant under $\tilde{\sigma}$ and its inverse. Since $B \cap I = \{0\}$ we see that $\{0\} = B \cap I \cap C(X)' = B \cap h(S) = h(r_B (S))$, whence $r_B (S) = \Delta$.
\end{proof}
Combining this result with Proposition~\ref{commbesk4}, Theorem~\ref{comcom4} and Corollary~\ref{intprocol4} we can provide the following alternative proof of a refined version of Theorem~\ref{triquivc4}.
\begin{thm}\label{triquivcny4}
For a topological dynamical system $\Sigma$, the following statements are equivalent.
\begin{enumerate}
 \item $\Sigma$ is topologically free;
\item $I\cap C(X) \neq 0$ for every non-zero ideal $I$ of $C^*(\Sigma)$;
\item $C(X)$ is a maximal abelian $C^*$-subalgebra of $C^*(\Sigma)$.
\end{enumerate}
\end{thm}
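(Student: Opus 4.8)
The plan is to reduce everything to the structural results already established, so that only one implication demands real work; I would prove (i)$\Leftrightarrow$(iii), then (i)$\Rightarrow$(ii), and finally (ii)$\Rightarrow$(i). For (i)$\Leftrightarrow$(iii), the point is that $C(X)$ is maximal abelian exactly when $C(X)=C(X)'$: one always has $C(X)\subseteq C(X)'$, and $C(X)'$ is itself abelian by Theorem~\ref{comcom4}(i), so a proper inclusion $C(X)\subsetneq C(X)'$ would exhibit a strictly larger abelian subalgebra, while $C(X)=C(X)'$ says $C(X)$ equals its own commutant. Proposition~\ref{commbesk4}(i) asserts that $C(X)=C(X)'$ precisely when $\Sigma$ is topologically free, which closes this equivalence. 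For (i)$\Rightarrow$(ii), topological freeness gives $C(X)=C(X)'$ by the same proposition, and Corollary~\ref{intprocol4} says $C(X)'$ has the intersection property for arbitrary ideals; substituting $C(X)$ for $C(X)'$ yields (ii) at once, even in the stronger form allowing ideals that are neither closed nor self-adjoint.

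The substance lies in (ii)$\Rightarrow$(i), which I would argue contrapositively: assuming $\Sigma$ is not topologically free, I aim to produce a non-zero ideal meeting $C(X)$ trivially. The natural instrument is Proposition~\ref{specint4} with the intermediate subalgebra $B=C(X)$, whose spectrum $\Delta$ is $X$ and whose restriction map $r_{C(X)}\colon\Gamma\to X$ sends $\gamma(x)\mapsto x$ and $\gamma(y,t)\mapsto y$; by that proposition it suffices to exhibit a proper, closed, $\tilde\sigma$-invariant subset $S\subseteq\Gamma$ with $r_{C(X)}(S)=X$. Since $\Per^{\infty}(\sigma)$ is not dense while $\Per^{\infty}(\sigma)\cup\PIP(\sigma)$ is dense by Lemma~\ref{untat4}, we have $\PIP(\sigma)\neq\emptyset$. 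Passing to a faithful representation, so that $\Gamma=\Gamma(\tilde\pi)$ and every $\gamma(y,t)$ genuinely occurs by Lemma~\ref{vilkafinns4}, I would take
\[
S=\{\gamma(x):x\in\Per^{\infty}(\sigma)\}\cup\{\gamma(y,t):y\in\Per(\sigma),\ t=1\text{ whenever }y\in\PIP(\sigma)\}.
\]
This set is $\tilde\sigma$-invariant because $\tilde\sigma$ fixes the parameter $t$ and preserves $\PIP(\sigma)$, and $r_{C(X)}(S)=X$ because every periodic point $y$ is reached by $\gamma(y,1)\in S$. It is proper because, choosing $y_0\in\PIP(\sigma)$ and $t_1\neq 1$, the character $\gamma(y_0,t_1)$ is genuinely distinct from $\gamma(y_0,1)$ by the separation argument inside the proof of Proposition~\ref{unutv4}, hence lies outside $S$. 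Granting closedness, Proposition~\ref{specint4} then forces $r_{C(X)}(S)=\Delta$ to witness the failure of the intersection property for $C(X)$, contradicting (ii).

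I expect the closedness of $S$ to be the main obstacle, equivalently the openness of $\Gamma\setminus S=\{\gamma(y,t):y\in\PIP(\sigma),\ t\neq 1\}$. I would verify this locally: given $\gamma(y_0,t_1)$ with $y_0\in\Per_k(\sigma)^0$ and $t_1\neq 1$, pick $g\in C(X)$ supported in a neighbourhood $V\subseteq\Per_k(\sigma)^0$ of $y_0$ with $g(y_0)\neq 0$, so that $g\delta^k\in C(X)'$ by Proposition~\ref{commbesk4}(i). The computation in the proof of Proposition~\ref{unutv4} gives $\gamma(y,t)(g\delta^k)=g(y)\,t$, so on the open set of characters where the Gelfand transform of $g$ does not vanish the coordinate $t$ is recovered continuously as the quotient of the transforms of $g\delta^k$ and $g$; thus $\{t\neq 1\}$ is open there and $\gamma(y_0,t_1)$ is interior to $\Gamma\setminus S$. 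The remaining care is to check that such local charts cover $\Gamma\setminus S$ and to handle the boundary behaviour as $y$ approaches $\partial\,\PIP(\sigma)$ or $t$ approaches $1$; here the explicit description of $\Gamma$ in Theorem~\ref{comcom4}(ii) together with Lemma~\ref{vilkafinns4} should make the argument go through.
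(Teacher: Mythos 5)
Your proposal is correct, and its overall architecture is the same as the paper's: (i)$\Leftrightarrow$(iii) from Theorem~\ref{comcom4}(i) plus Proposition~\ref{commbesk4}(i), (i)$\Rightarrow$(ii) from Corollary~\ref{intprocol4}, and (ii)$\Rightarrow$(i) by feeding a proper, closed, $\tilde{\sigma}$-invariant set $S\subseteq\Gamma$ whose restriction to $C(X)$ is all of $X$ into Proposition~\ref{specint4}. Where you genuinely diverge is in how $S$ is built and verified, which is the real substance of the implication. The paper takes $S=\overline{\{\gamma(x):x\in\Per^{\infty}(\sigma)\}\cup\{\gamma(y,1):y\in\Per(\sigma)\}}$, so closedness is free, and proves properness by a continuity argument: picking $k$ with $\Per^{k}(\sigma)^0\neq\emptyset$, a point $y\in\Per^{k}(\sigma)^0$ with $\Per(y)=l$ and $k=lr$, a real-valued $f$ supported in $\Per^{k}(\sigma)^0$ with $f(y)=1$, and $s$ with $s^{r}=i$, the Gelfand transform of $f\delta^{k}\in C(X)'$ is real-valued on the generating set, hence on its closure $S$, while $\gamma(y,s)(f\delta^{k})=i$, so $\gamma(y,s)\notin S$. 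You instead avoid the closure by enlarging the set --- crucially, admitting \emph{all} parameters $t$ over periodic points outside $\PIP(\sigma)$ --- and prove closedness directly; note also that you get your witness points from $\PIP(\sigma)\neq\emptyset$ via Lemma~\ref{untat4}, so they have exact period $k$, whereas the paper's witness may have period properly dividing $k$ (whence the $s^{r}=i$ device). Your hedged steps do go through, and the two worries you flag are non-issues: openness of $\Gamma\setminus S$ is a local property, every point of $\Gamma\setminus S$ is of the form $\gamma(y_0,t_1)$ with $y_0\in\Per_{k}(\sigma)^0$ for $k=\Per(y_0)$ (precisely because you placed all characters over non-$\PIP$ periodic points into $S$), and your chart $U=\{\widehat{g}\neq0\}$ with $\tau=\widehat{g\delta^{k}}/\widehat{g}$ shows each such point is interior to $\Gamma\setminus S$: any $\gamma\in U$ restricts on $C(X)$ to evaluation at some $y\in\{g\neq0\}\subseteq\Per_{k}(\sigma)^0\subseteq\PIP(\sigma)$, $\tau(\gamma)$ recovers its circle parameter, and the only element of $S$ lying over such a $y$ is $\gamma(y,1)$, where $\tau=1$; no boundary analysis is needed because boundary points lie in $S$, not in its complement. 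In sum, the paper's closure trick is shorter and sidesteps the closedness question entirely at the cost of one clever choice of parameter, while your argument is longer but produces an explicitly closed set and, en route, the useful observation that $t$ is a continuous local coordinate on the part of $\Gamma$ lying over $\PIP(\sigma)$.
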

\begin{proof}
Since $C(X)'$ is maximal abelian by Theorem~\ref{comcom4} (i), equivalence of (i) and (iii) is an immediate consequence of Proposition~\ref{commbesk4} (i).
The implication (iii) $\Rightarrow$ (ii) follows from Corollary~\ref{intprocol4} since $C(X) \subseteq C(X)'$.
To prove (ii) $\Rightarrow$ (i), suppose that $\Per^{\infty}(\sigma)$ is not dense in $X$. Then $C(X) \subsetneq C(X)'$ by the above, and by Theorem~\ref{commbesk4}(i) it follows that there exists a positive $k$ for which $\Per^k (\sigma)^0 \neq \emptyset$.
Let $S = \overline{\{\gamma(x) : x \in \Per^{\infty}(\sigma)\} \cup \{\gamma(y, 1) : y \in \Per (\sigma)\}}$. It is easy to see that $S$ is invariant under $\tilde{\sigma}$ and its inverse, and clearly its restriction to $C(X)$ coincides with $X$. We will show that $S$ is a proper subset of $\Gamma$. Let $y \in \Per^k (\sigma)^0$. Then there are positive integers $l, r$ such that $\Per (y) = l$ and $k = l \cdot r$. Let $s$ be a complex number such that $s^r = i$ and let $f \in C(X)$ be real-valued and such that $\supp(f) \subseteq \Per^{k} (\sigma)^0$ and $f(y) = 1$. Then by Proposition~\ref{commbesk4} (i) it follows that $f \delta^k \in C(X)'$. To see that $\gamma(y,s) \notin S$, note that $\gamma(x) (f \delta^k)$ and $\gamma(y,1) (f \delta^k)$ are real valued for all $x \in \Per^{\infty} (\sigma)$ and $y \in \Per (\sigma)$, while $\gamma(y,s) (f \delta^k) = s^r \cdot f(y) = i \cdot 1$. Hence by Proposition~\ref{specint4}, $C(X)$ does not have the intersection property for ideals.
\end{proof}
Given a dynamical system $\Sigma$, it seems natural to ask which kinds of \emph{properly} intermediate $C^*$-subalgebras, by which we mean $C^*$-subalgebras $B$ with $C(X) \subsetneq B \subsetneq C(X)'$, exist in $C^*(\Sigma)$, regarding the intersection property for ideals.
In~\cite[Theorem 5.4]{SSD34}, a related result is obtained in the case when an algebraic crossed product is associated with a more general dynamical system than the ones considered in this paper.
In our context of $C^*$-crossed products, it turns out that if $C(X) \subsetneq C(X)'$, there always exist $C^*$-subalgebras $B_1, B_2$ with $C(X) \subsetneq B_i \subsetneq C(X)'$ for $i = 1,2$ such that $B_1$ has the intersection property for ideals and $B_2$ does not. The result will follow easily from a number of propositions in which we construct intermediate $C^*$-subalgebras of $C^* (\Sigma)$ for restricted classes of $\Sigma$.
The following two propositions are extensions of~\cite[Proposition 5.2]{SSD34} and~\cite[Proposition 5.3]{SSD34}, respectively, to the context of $C^*$-crossed products.
\begin{pro}\label{ejintp4}
Suppose that there exists an integer $n >0$ such that $\Per^n(\sigma)^0$ contains at least two orbits. Then there exists a $C^*$-subalgebra $B$ of $C^*(\Sigma)$ with $C(X) \subsetneq B \subsetneq C(X)'$ that does not have the intersection property for ideals.
\end{pro}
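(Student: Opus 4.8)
The plan is to apply the spectral criterion of Proposition~\ref{specint4}: to produce an intermediate subalgebra $C(X) \subsetneq B \subsetneq C(X)'$ \emph{without} the intersection property, it suffices to exhibit a proper closed subset $S \subseteq \Gamma$ that is invariant under $\tilde{\sigma}$ and its inverse and whose restriction $r_B(S)$ equals the whole spectrum $\Delta$ of $B$. The mechanism I would use is a \emph{folding} of the circle fibres of $\Gamma$ lying over a set of periodic interior points: I build $B$ from a self-adjoint element that detects only the real part of the circle parameter $t$, so that the characters $\gamma(z,t)$ and $\gamma(z,\bar{t})$ agree on $B$. Removing one open half of these fibres then yields a proper closed invariant $S$ that still covers $\Delta$ after restriction.

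First I would extract the data from the hypothesis, which in particular makes $\Per^n(\sigma)^0$ non-empty. A Baire category argument of the type used in Lemma~\ref{untat4} then produces a periodic interior point $y_1 \in \Per_k(\sigma)^0$ for some $k \mid n$. Taking a small neighbourhood of $y_1$ whose first $k$ iterates under $\sigma$ are pairwise disjoint and orbit-averaging, I obtain a $\sigma$-invariant function $g \in C(X)$ with $0 \leq g \leq 1$, $g(y_1) = 1$ and $\supp(g) \subseteq \Per_k(\sigma)^0$, so that $W' := \{z : g(z) > 0\}$ is an open $\sigma$-invariant subset of $\Per_k(\sigma)^0$ on which every point has period exactly $k$. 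I set $c = g(\delta^k + \delta^{-k})$; since $\supp(g) \subseteq \Per^k(\sigma) = \Per^{-k}(\sigma)$, Proposition~\ref{commbesk4}~(i) gives $c \in C(X)'$, and I let $B$ be the $C^*$-subalgebra generated by $C(X)$ and $c$, which is commutative as a subalgebra of the maximal abelian algebra $C(X)'$ (Theorem~\ref{comcom4}). Here $C(X) \subsetneq B$ because the $k$-th generalized Fourier coefficient of $c$ is $g \neq 0$, so $c \notin C(X)$; and $B \subsetneq C(X)'$ because $g\delta^k \in C(X)'$ satisfies $\gamma(z,t)(g\delta^k) = g(z)t$ over $z \in W'$, which distinguishes $t$ from $\bar{t}$, unlike every element of $B$ (see the last paragraph).

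The heart of the argument is the choice of $S$. I put $R = \{\gamma(z,t) : z \in W',\ \Im t > 0\}$ and $S = \Gamma \setminus R$. The fold identity is the computation on generators: for $z \in W'$ (hence of period exactly $k$) one has $\gamma(z,t)(f) = f(z) = \gamma(z,\bar{t})(f)$ for $f \in C(X)$, and $\gamma(z,t)(c) = g(z)(t + t^{-1}) = g(z)(t + \bar{t}) = \gamma(z,\bar{t})(c)$; since these values determine the restriction to $B$, this gives $r_B(\gamma(z,t)) = r_B(\gamma(z,\bar{t}))$. As $\gamma(z,\bar{t}) \in S$ whenever $\gamma(z,t) \in R$, I obtain $r_B(R) \subseteq r_B(S)$ and hence $r_B(S) = r_B(\Gamma) = \Delta$, the restriction map $\Gamma \to \Delta$ being surjective. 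The set $R$ is invariant under $\tilde{\sigma}$ and its inverse, since $\tilde{\sigma}(\gamma(z,t)) = \gamma(\sigma z, t)$ and $W'$ is $\sigma$-invariant, and it is non-empty as $\gamma(y_1, i) \in R$; so $S$ is a proper closed invariant subset. Proposition~\ref{specint4} then shows that $B$ does not have the intersection property. Equivalently, one may write the offending ideal down directly as the closed ideal of $C^*(\Sigma)$ generated by $h(S)$, whose intersection with $B$ is $h(r_B(S)) = h(\Delta) = \{0\}$ by the computation in the proof of Proposition~\ref{specint4}.

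The step I expect to require the most care is the verification that $R$ is genuinely \emph{open} in $\Gamma$ (equivalently that $S$ is closed), since this is what makes $S$ a legitimate input for Proposition~\ref{specint4}. This rests on the local topology of $\Gamma$ over the periodic interior points: over $W'$ the characters $\gamma(z,t)$ should form an open region in which $(z,t)$ serve as local coordinates, recorded by the continuous functions coming from $C(X)$ (giving $z$) and from elements $g_0\delta^k$ with $\supp(g_0) \subseteq W'$ (giving $t$, since $\gamma(z,t)(g_0\delta^k) = g_0(z)t$), so that the condition $\Im t > 0$ cuts out an open set. I would make this precise using the description of $\Gamma$ in Theorem~\ref{comcom4} together with the inclusion $\{\gamma(y,t) : y \in \PIP(\sigma)\} \subseteq \PIP(\tilde{\sigma})$ noted after Proposition~\ref{unutv4}. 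The remaining points --- surjectivity of $r_B : \Gamma \to \Delta$, and the claim that every element of $B$ is invariant under $t \mapsto \bar{t}$ over $W'$ (which follows because the generators $C(X)$ and $c$ are, and this symmetry is preserved under sums, products and norm limits) --- are routine.
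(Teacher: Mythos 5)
Your proof is correct, but it takes a genuinely different route from the paper's. The paper uses the two-orbit hypothesis essentially: it picks two disjoint $\sigma$-invariant open sets $U_1, U_2 \subseteq \Per^n(\sigma)^0$, takes $B = \{a \in C^*(\Sigma) : \supp(a(k)) \subseteq U_1 \cap \Per^k(\sigma) \textup{ for } k \neq 0\}$, and kills it with the explicit closed ideal $I$ generated by $f_{n_2} - f_{n_2}\delta^n$ where $\supp(f_{n_2}) \subseteq U_2$; the verification that $B \cap I = \{0\}$ is a hands-on Fourier-coefficient argument (coefficients of elements of $I$ are supported in $U_2$, those of elements of $B$ with non-zero index in $U_1$, and $I \cap C(X) = \{0\}$ by testing against the total family $\{\tilde{\pi}_x\} \cup \{\tilde{\pi}_{y,1}\}$), with no appeal to Proposition~\ref{specint4}. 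You instead fold the circle fibres over a single invariant open set $W'$ of periodic interior points by adjoining the one self-adjoint element $c = g(\delta^k + \delta^{-k})$ and then invoke the spectral criterion. Two remarks. First, the step you flagged --- openness of $R$ --- closes cleanly without any local-coordinate discussion: by Theorem~\ref{comcom4}~(ii) every $\omega \in \Gamma$ is of the form $\gamma(x)$ or $\gamma(z,t)$, and since $\supp(g) \subseteq \Per_k(\sigma)^0$ consists of points of exact period $k$, one checks directly that
\[
R = \{\omega \in \Gamma : \omega(g) > 0 \ \textup{ and } \ \Im \, \omega(g\delta^k) > 0\},
\]
which is open because the Gelfand transforms of $g$ and of $g\delta^k \in C(X)'$ are continuous on $\Gamma$. (A trivial repair elsewhere: orbit-averaging gives $g(y_1) = 1/k$ rather than $1$; take a maximum over the orbit instead, or simply use $g(y_1) > 0$.) Second, your argument proves more than the stated proposition: it uses only $\Per^n(\sigma)^0 \neq \emptyset$, i.e.\ that $\Sigma$ is not topologically free, rather than the existence of two orbits. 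The paper needs two orbits precisely to obtain the disjoint invariant sets $U_1, U_2$, and must then handle the remaining configurations separately --- in particular the $B_2$ construction of Proposition~\ref{isolbana4} for an isolated orbit, which is a fibre-collapsing argument in the same spirit as yours; your single construction covers all of these cases at once and would streamline the proof of case (ii) of Theorem~\ref{mellankompl4}. What the paper's route buys in exchange is independence from the spectral machinery: it exhibits the offending ideal explicitly and verifies the trivial intersection by direct computation with generalized Fourier coefficients.
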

\begin{proof}
Fix an integer $n >0$ such that $\Per^n(\sigma)^0$ contains at least two orbits. Then it is easy to see that there exist two disjoint open subsets $U_1, U_2$ of $\Per^n(\sigma)^0$ that are invariant under $\sigma$ and $\sigma^{-1}$. Define $B = \{a \in C^*(\Sigma) : \supp(a(k)) \subseteq U_1 \cap \Per^k(\sigma) \textup{ for } k \neq 0\}$. Using the continuity of the projection map $E$ and Proposition~\ref{cesar4} it is easy to check that $B$ is indeed a $C^*$-subalgebra of $C^*(\Sigma)$. By the explicit description of $C(X)'$ in Proposition~\ref{commbesk4}(i) we see that we have the inclusions $C(X) \subseteq B \subseteq C(X)'$. To see that the inclusions are strict, consider non-zero functions $f_{n_1}, f_{n_2} \in C(X)$ with support in $U_1$ and $U_2$, respectively. Then $f_{n_1} \delta^n \in B \setminus C(X)$ and $f_{n_2} \delta^n \in C(X)' \setminus B$ and thus $C(X) \subsetneq B \subsetneq C(X)'$.
We now exhibit a non-zero closed ideal that has trivial intersection with $B$. Let $f_{n_2}$ be as above and let $I = (f_{n_2}-f_{n_2}\delta^n)$, the closed ideal generated by the element $f_{n_2}-f_{n_2}\delta^n$. We first prove that $I \cap C(X) = \{0\}$. For a point $y \in \Per(\sigma)$, consider the representation $\tilde{\pi}_{y,1}$ of $C^*(\Sigma)$ as described in the passage following Corollary~\ref{bildkp4}. 
We see that for $y \notin \Per^n (\sigma)$, $\tilde{\pi}_{y,1}(f_{n_2}) =0$ whence $\tilde{\pi}_{y,1} (f_{n_2}-f_{n_2}\delta^n) =0 $ and hence $\tilde{\pi}_{y,1}$ vanishes on $I$ for such $y$. For $y \in \Per^n(\sigma)$, clearly $\Per(y) | n$. Then $\tilde{\pi}_{y,1} (\delta^n) = \tilde{\pi}_{y,1}(1) = \id$ and thus $\tilde{\pi}_{y,1} (f_{n_2}-f_{n_2}\delta^n) = 0$, so $\tilde{\pi}_{y,1}$ vanishes on $I$ also in this case. For $x \in \Per^{\infty}(\sigma)$, it is clear that $\tilde{\pi}_x (f_{n_2}) = 0$ and thus $\tilde{\pi}_x$ vanishes on $I$. Suppose now that $f \in C(X) \cap I$. Since the family $\{\tilde{\pi}_{x} : x \in \Per^{\infty} (\sigma)\} \cup \{\tilde{\pi}_{y,1} : y \in \Per (\sigma)\}$ is easily seen to be total on $C(X)$, we conclude that $f$ must be zero and thus $I \cap C(X) = \{0\}$. Knowing this, to conclude that $B \cap I = \{0\}$ it suffices to show that for any integer $j \neq 0$ and $a \in B \cap I$ we have $a(j) = 0$. Clearly, $I$ can be described as the closure of the set  \[\{(\sum_{i} g_i \delta^i)(f_{n_2} - f_{n_2}\delta^n)(\sum_{j} h_j \delta^j) : \textup{only finitely many } g_i, h_j \in C(X) \textup{ non-zero}\}.\] The facts that $\supp(f_{n_2}) \subseteq U_2$ and that $U_2$ is invariant under $\sigma$ and $\sigma^{-1}$ imply that the generalized Fourier coefficients of elements of this dense subset of $I$ all have support contained in $U_2$. Continuity of $E$ then implies that for $a \in I$ and $i \in \mathbb{Z}$ we have $\supp(a(i)) \subseteq U_2$. Now suppose $a \in B \cap I$. By the above we know that  $\supp(a(i)) \subseteq U_2$ for every $i \in \mathbb{Z}$ since $a \in I$, but by definition of $B$ we also have that, for $i \neq 0, \supp(a(i)) \cap U_2 = \emptyset$. So the only non-zero generalized Fourier coefficient of $a$ is $a(0)$, meaning that $a \in C(X)$. By the above, however, we know that $I \cap C(X) = \{0\}$ so $B \cap I = \{0\}$ as desired.
\end{proof}
\begin{pro}\label{intp4}
Suppose that there exists an integer $n >0$ such that $\Per^n (\sigma)^0$ contains a point $x_0$ that is not isolated. Then there exists a $C^*$-subalgebra $B$ of $C^*(\Sigma)$ with $C(X) \subsetneq B \subsetneq C(X)'$ that has the intersection property for ideals.
\end{pro}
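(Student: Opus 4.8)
The plan is to verify the criterion of Proposition~\ref{specint4}: I will exhibit an intermediate $C^*$-subalgebra $B$ with $C(X) \subsetneq B \subsetneq C(X)'$ for which every proper closed $\widetilde{\sigma}$-invariant subset $S \subseteq \Gamma$ restricts to a proper subset of the spectrum $\Delta$ of $B$. Writing $p = \Per(x_0)$ and $m = n/p$, I first record a consequence of the non-isolatedness of $x_0$: since each $\Per^k(\sigma)$ is closed, any sequence $z_j \to x_0$ of distinct points of $\Per^n(\sigma)^0$ satisfies $\Per(z_j) \geq p$ eventually (a limit of points of period $q < p$ would force $\sigma^q x_0 = x_0$). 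Thus $x_0$ is accumulated either by points of the same period $p$ or by points of strictly larger period dividing $n$; this possible ``period drop'' is the phenomenon I intend to exploit.

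For the construction I take $B$ to be the $C^*$-subalgebra generated by $C(X)$ together with $f\delta^n$, where $0 \le f \le 1$, $f(x_0)=1$, and $\supp(f) \subseteq \Per^n(\sigma)^0$, so that $f\delta^n \in C(X)'$ by Proposition~\ref{commbesk4}(i); by Theorem~\ref{comcom4} the algebra $B$ is commutative, and the generators together with the Ces\`{a}ro sums of Proposition~\ref{cesar4} show that $B$ is the closure of its generalized polynomials. The inclusion $C(X) \subsetneq B$ is clear because $(f\delta^n)(n)=f\neq 0$. For $B \subsetneq C(X)'$ I will use that every generalized Fourier coefficient $a(kn)$ of an element of $B$ has support within the $\sigma^{n}$-orbit of $\supp(f)$ (products and adjoints only reproduce translates of $f$); choosing $\supp(f)$ to be a small neighbourhood of $x_0$ inside $\Per^n(\sigma)^0$ — possible precisely because $x_0$ is non-isolated, so $\Per^n(\sigma)^0$ contains nearby points off this saturation — leaves some $g\delta^n \in C(X)'\setminus B$.

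To verify the criterion, let $S \subsetneq \Gamma$ be proper, closed and $\widetilde{\sigma}$-invariant, and compute, as in Proposition~\ref{unutv4}, that on a character $\gamma(y,t)$ lying over $y$ the element $f\delta^n$ takes the value $f(y)\,t^{\,n/\Per(y)}$. Over the detected locus $\{y : f(y)\neq 0\}$ the restriction map therefore has finite fibres, while over aperiodic points the fibre is a singleton by uniqueness of pure state extensions; so if $S$ omits an aperiodic character, or any character over a point where $f$ separates, then $r_B(S)$ already omits the corresponding point of $\Delta$ and we are done. The heart of the argument is the remaining case, in which $S$ contains all these characters: here I will take the accumulating periodic points $z_j \to x_0$ and pass to the limit of the $\gamma(z_j,t)\in S$. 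Matching values on $f\delta^n$ and on the functions of $C(X)'$ shows each limit has the form $\gamma(x_0,s)$ with $s^{\,n/p} = t^{\,n/\Per(z_j)}$; as $t$ and $j$ vary and $s$ thereby sweeps out all of $\mathbb{T}$, closedness of $S$ forces the \emph{entire} circle $\{\gamma(x_0,s):s\in\mathbb{T}\}$ into $S$, the period drop making this coupling genuinely multivalued. Propagating this through the invariance of $S$ will then force $S$ to exhaust the fibre over the orbit of $x_0$, so that $r_B(S)$ cannot miss its image while $S$ stays proper.

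The main obstacle is exactly this closure-and-branching step. One must pin down the limit points of $\gamma(z_j,t)$ precisely — the parameter $s$ is determined only up to $(n/p)$-th roots of unity, and when $\Per(z_j)=p$ there is no branching at all — and one must control the periodic orbits that are \emph{not} accumulated by the detected locus, over which $r_B$ collapses the circle and $S$ is free to be a section. Ruling these out, if necessary by enlarging $B$ to a support algebra $\{a\in C(X)':\supp(a(k))\subseteq V\ \text{for}\ k\neq 0\}$ for a suitable invariant open $V$ chosen so that the detected characters are dense in $\Gamma$, is the delicate part; once such density is secured, any closed invariant set containing the detected characters must equal $\Gamma$, and the intersection property follows from Proposition~\ref{specint4} together with Corollary~\ref{intprocol4} for $C(X)'$.
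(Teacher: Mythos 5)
Your proposal is not yet a proof: the step you yourself label ``the delicate part'' is exactly where the mathematical content lies, and it is missing. Worse, your primary construction is actually false, not merely unjustified. Take $X=[0,1]$ and $\sigma=\id$, so that every point lies in $\Per^1(\sigma)^0$ and is non-isolated, $C^*(\Sigma)\cong C([0,1]\times\mathbb{T})$ with $\delta$ the coordinate function $z$, and $C(X)'=C^*(\Sigma)$. Let $B$ be the $C^*$-algebra generated by $C(X)$ and $f\delta$, where $f\ge 0$, $f(x_0)=1$ and $\supp(f)\subsetneq[0,1]$. Every element of $B$ is constant in $z$ over the set $\{f=0\}$: the generators have this property and it passes to sums, products, adjoints and norm limits, hence to all of $B$. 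Now let $I$ be the nonzero closed ideal of functions vanishing on $Z=(\supp(f)\times\mathbb{T})\cup(\{f=0\}\times\{1\})$. If $F\in B\cap I$, then $F$ vanishes on $\supp(f)\times\mathbb{T}$, while on $\{f=0\}\times\mathbb{T}$ it is constant in $z$ and vanishes at $z=1$, hence vanishes there too; so $F=0$ and $B\cap I=\{0\}$. This is exactly the ``collapsed circles admit a section'' failure mode you flagged but did not rule out (here $S=Z$ is proper, closed, trivially $\widetilde{\sigma}$-invariant, and $r_B(S)=\Delta$), so with this $B$ the criterion of Proposition~\ref{specint4} genuinely fails and no amount of care in the limit analysis can rescue it.

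Your fallback --- replacing $B$ by a support algebra $\{a\in C(X)':\supp(a(k))\subseteq V \text{ for } k\neq 0\}$ with the detected characters dense in $\Gamma$ --- points in the right direction (for $V$ essentially the complement of the orbit of $x_0$ this is close to the correct algebra), but you neither construct $V$ nor prove the required density of detected characters in $\Gamma$, and that density is a nontrivial statement about limit points of the circle fibres $\{\gamma(y,t)\}$, precisely the delicate behaviour (period drops, collapsing over non-$\PIP$ points) that your own discussion identifies. The paper's proof avoids all spectral analysis. It takes $B=\{a\in C(X)': a(k)(x_0)=0 \text{ for } k\neq 0\}$ and argues directly: given a nonzero ideal $I$, Corollary~\ref{intprocol4} yields $0\neq a\in C(X)'\cap I$; if $a(i)(x_0)\neq 0$ for some $i\neq 0$, then continuity of $a(i)$ together with non-isolatedness of $x_0$ gives a point $x_1\neq x_0$ with $a(i)(x_1)\neq 0$; choosing $g\in C(X)$ with $g(x_1)=1$ and $\supp(g)$ contained in a neighbourhood of $x_1$ missing $x_0$, the module property of $E$ gives $(ga)(k)=g\cdot a(k)$ for all $k$, whence $0\neq ga\in B\cap I$. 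You should either adopt such a direct argument or supply the missing density theorem; as it stands the proposal contains a genuine gap and its central construction is incorrect.
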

\begin{proof}
Suppose that a point $x_0 \in X$ is as in the statement of the theorem. Define \[B = \{a \in C(X)' : a(k) (x_0) = 0 \textup{ for } k \neq 0\}.\] Using the continuity of the projection map $E$ and Proposition~\ref{cesar4} it is easy to check that $B$ is indeed a $C^*$-subalgebra of $C^*(\Sigma)$ and clearly $C(X) \subseteq B \subseteq C(X)'$. To show that the inclusions are strict we note that the fact that $x_0$ is not isolated implies that $\Per^n (\sigma)^0$ must be infinite, so we can choose an $x_1 \in \Per^n (\sigma)^0$ and separate $x_0$ from $x_1$ by two open subsets $U_0, U_1 \subseteq \Per^n (\sigma)^0$. Take two functions $f_{n_0}, f_{n_1} \in C(X)$ with support contained in $U_0$ and $U_1$, respectively,  and such that $f_{n_0} (x_0) \neq 0$ and $f_{n_1} (x_1) \neq 0$. Then $f_{n_0} \delta^n \in C(X)' \setminus B$ and $f_{n_1}\delta^n \in B \setminus C(X)$ and thus $C(X) \subsetneq B \subsetneq C(X)'$. 
Now let $I \subseteq C^*(\Sigma)$ be an ideal. We wish to show that $B \cap I \neq \{0\}$. By Corollary~\ref{intprocol4} we know that $C(X)' \cap I \neq \{0\}$. Suppose $0 \neq a \in C(X)' \cap I$. If $a(i)(x_0) = 0$ for every integer $i \neq 0$ then $a \in B \cap I$ and we are done, so suppose there is an $i \neq 0$ such that $a(i)(x_0) \neq 0$. Since $x_0$ is not isolated we can find an $x_1 \neq x_0$ such that $a(i)(x_1) \neq 0$. Separate $x_0$ and $x_1$ by open sets $V_0, V_1$ and choose a function $g \in C(X)$ such that $g(x_1) = 1$ and $\supp(g) \subseteq V_1$. The module property of $E$ (by which we mean that if $a \in C^*(\Sigma)$ and $f, g \in C(X)$ we have $E(fag) = f E(a) g$ implies that $(ga)(i) = g \cdot a(i) \neq 0$. Hence $0 \neq ga \in B \cap I$.
\end{proof}
To analyze the last possibility we need to recall two basic results from the theory of $C^*$-crossed products as introduced in this paper.
The following result is part of~\cite[Proposition 3.5]{T34}.
\begin{pro}\label{enbana4}
Suppose $\Sigma = (X, \sigma)$ is such that $X$ consists of a single $\sigma$-orbit of order $p$, $X = \{x, \sigma(x), \ldots, \sigma^{p-1} (x)\}$, where $p$ is a positive integer. Then $C^* (\Sigma) \cong C(\mathbb{T}, M_p)$, where $M_p$ is the set of $p \times p$-matrices over the complex numbers, via the isomorphism $\widetilde{\rho}_x = \rho_x \times u$, where \[ \rho_{x} (f)=\left( 
\begin{array}{cccc}
f(x) & 0 & \ldots & 0 \\
0 & f \circ \sigma (x) & \ldots & 0 \\
\vdots & \vdots & \ddots & \vdots \\
0 & 0 & \ldots & f \circ \sigma^{p-1} (x)
\end{array} \right)\]
for $f \in C(X)$, and
\[ u (z)=\left( 
\begin{array}{ccccc}
0 & 0 & \ldots & 0 & z \\
1 & 0 & \ldots & 0 & 0 \\
0 & 1 & \ldots & 0&0\\
\vdots & \vdots & \ddots & \vdots & \vdots \\
0 & 0 & \ldots & 1&0
\end{array} \right)\]
for $z \in \mathbb{T}$.
\end{pro}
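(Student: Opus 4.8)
The plan is to obtain $\widetilde{\rho}_x$ from the universal property of the crossed product and then prove it to be bijective, handling injectivity and surjectivity separately.

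First I would verify that $\rho_x$ and $u$ form a covariant representation of $\Sigma$ into $C(\mathbb{T}, M_p)$. Here $\rho_x$ sends $f$ to a diagonal matrix that is constant in $z$, so it is a unital $*$-homomorphism of $C(X) \cong \mathbb{C}^p$; and for each $z \in \mathbb{T}$ the matrix $u(z)$ is a cyclic shift with a unimodular entry, hence unitary, so $u$ is a unitary of $C(\mathbb{T}, M_p)$. The one computation to carry out is the covariance relation $u \rho_x(f) u^* = \rho_x(\alpha(f))$. Evaluating at a fixed $z$ and acting on the basis vector $e_j$, one reads off $f(\sigma^{j-1}x)$ on both sides for $j \geq 1$, while for $j = 0$ the factors $z$ and $\bar{z}$ cancel and one uses $\sigma^{-1}x = \sigma^{p-1}x$. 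By the universal property this covariant pair extends to a $*$-homomorphism $\widetilde{\rho}_x = \rho_x \times u$ of $C^*(\Sigma)$ into $C(\mathbb{T}, M_p)$ with $\widetilde{\rho}_x(\delta) = u$.

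For injectivity the key observation is that evaluation of $\widetilde{\rho}_x$ at a point $t \in \mathbb{T}$ is precisely the representation $\tilde{\pi}_{x,t}$ described in the passage following Corollary~\ref{bildkp4}: the diagonal matrix $\rho_x(f)$ coincides with $\tilde{\pi}_{x,t}(f)$ and $u(t) = \tilde{\pi}_{x,t}(\delta)$. Hence $\widetilde{\rho}_x(a) = 0$ forces $\tilde{\pi}_{x,t}(a) = 0$ for every $t$. Since $X$ is a single orbit of order $p$, every point is periodic of period $p$ and isolated, so $\Per^{\infty}(\sigma) = \emptyset$ and $\PIP(\sigma) = X$; thus $\Phi' = \{\varphi_{y,t} : y \in X, t \in \mathbb{T}\}$. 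Each $\varphi_{y,t}$ is a vector state of $\tilde{\pi}_{y,t}$, which is unitarily equivalent to $\tilde{\pi}_{x,t}$, so it vanishes on the positive element $a^*a$. Totality of $\Phi'$ (Lemma~\ref{total4}) then yields $a^*a = 0$, i.e.\ $a = 0$. For surjectivity I would exhibit enough of the image directly. Since $u(z)^p = zI$, we have $\widetilde{\rho}_x(\delta^p) = zI$, so together with the constants the image contains $g(z)I$ for every $g \in C(\mathbb{T})$ by the Stone--Weierstrass theorem. It also contains each diagonal matrix unit $\rho_x(\chi_{\sigma^i x})$ and, combining these with $u$, the cyclic off-diagonal units (up to a factor $z$ on the wrap-around one, removed using $\bar{z}I$); products of these produce every matrix unit, and multiplication by $g(z)I$ produces $g(z)E_{ij}$ for all $i,j$ and all $g \in C(\mathbb{T})$. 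As an arbitrary element of $C(\mathbb{T}, M_p)$ is the finite sum of its entries against the matrix units, the map is onto.

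The main obstacle is injectivity, specifically justifying that the family of pointwise evaluations is separating; this is exactly where I rely on the totality result of Lemma~\ref{total4} (via the identification of these evaluations with the representations $\tilde{\pi}_{x,t}$) rather than attempting an ad hoc argument with generalized Fourier coefficients.
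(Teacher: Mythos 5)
Your proof is correct, but there is nothing in the paper to compare it against: Proposition~\ref{enbana4} is stated without proof, being quoted from \cite[Proposition 3.5]{T34}, so any argument here is necessarily ``different'' from the paper's. Judged against the machinery the paper does set up, your route is sound and self-contained. The covariance computation for $(\rho_x, u)$ is right, including the wrap-around index, where $z\bar{z}=1$ and $\sigma^{-1}x=\sigma^{p-1}x$; the key identification $\mathrm{ev}_t\circ\widetilde{\rho}_x=\tilde{\pi}_{x,t}$ is legitimate because the two $*$-homomorphisms agree on $C(X)$ and on $\delta$, which generate $C^*(\Sigma)$; and since $X$ is finite, hence discrete, one indeed has $\Per^{\infty}(\sigma)=\emptyset$ and $\PIP(\sigma)=\Per_p(\sigma)=X$, so $\Phi'=\{\varphi_{y,t}: y\in X,\ t\in\mathbb{T}\}$ and Lemma~\ref{total4} yields injectivity, using the fact (recorded in the paper after Corollary~\ref{bildkp4}) that the unitary equivalence class of $\tilde{\pi}_{y,t}$ depends only on the orbit of $y$ and the parameter $t$. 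In fact you can skip the passage to $a^*a$: unitary equivalence transports $\tilde{\pi}_{x,t}(a)=0$ directly to $\tilde{\pi}_{y,t}(a)=0$, and positivity is only needed at the final appeal to totality. The surjectivity argument is likewise complete, with one point worth making explicit: the Stone--Weierstrass step produces $g(z)I$ only in the \emph{closure} of the image, so you should invoke the standard fact that the image of a $*$-homomorphism between $C^*$-algebras is closed; after that, the constant matrix units coming from $\rho_x(C(X))$ and $u$ give every $g(z)E_{ij}$ and hence all of $C(\mathbb{T},M_p)$. The classical proof of this result proceeds instead by identifying the transitive $\mathbb{Z}$-action with stabilizer $p\mathbb{Z}$ and realizing the crossed product as $M_p(C^*(p\mathbb{Z}))\cong M_p(C(\mathbb{T}))$; your concrete matrix-unit computation buys independence from that induction machinery at the modest cost of relying on Lemma~\ref{total4}.
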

The following result is easily proved using the definition of a $C^*$-crossed product.
\begin{lem}\label{dirsum4}
Suppose $\Sigma = (X, \sigma)$ is such that $X$ is the union of two disjoint non-empty open (hence closed) subsets $A_1, A_2 \subseteq X$ that are invariant under $\sigma$ and its inverse. Denoting $\Sigma_i = (A_i, {\sigma}_{\upharpoonright A_i})$ for $i = 1, 2$ we then have that $C^*(\Sigma) \cong C^* (\Sigma_1) \oplus C^* (\Sigma_2)$ as $C^*$-algebras. Denoting the canonical unitaries of the two summands by $\delta_i$ for $i = 1,2$, an isomorphism is given by $f \mapsto f_{\upharpoonright A_1} \oplus f_{\upharpoonright A_2}$ for $f \in C(X)$ and $\delta \mapsto \delta_1 \oplus \delta_2$.
\end{lem}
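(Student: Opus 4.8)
The plan is to build the isomorphism directly from the universal property of the crossed product and to verify injectivity through the faithful conditional expectation $E$. Since $A_1$ and $A_2$ are clopen and $\sigma$-invariant, the restriction map $\Theta : f \mapsto f_{\upharpoonright A_1} \oplus f_{\upharpoonright A_2}$ is a $*$-isomorphism $C(X) \cong C(A_1) \oplus C(A_2)$ carrying $\alpha$ to $\alpha_1 \oplus \alpha_2$, where $\alpha_i$ is the automorphism of $C(A_i)$ induced by $\sigma_{\upharpoonright A_i}$; invariance of $A_i$ is exactly what guarantees $\alpha(f)_{\upharpoonright A_i} = \alpha_i(f_{\upharpoonright A_i})$. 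First I would check that the prescribed assignment is covariant: with $\Psi$ denoting the candidate map, one has
\[ (\delta_1 \oplus \delta_2)(f_{\upharpoonright A_1}\oplus f_{\upharpoonright A_2})(\delta_1 \oplus \delta_2)^* = \alpha_1(f_{\upharpoonright A_1}) \oplus \alpha_2(f_{\upharpoonright A_2}) = \alpha(f)_{\upharpoonright A_1} \oplus \alpha(f)_{\upharpoonright A_2}, \]
so that the pair $(\Theta,\ \delta \mapsto \delta_1 \oplus \delta_2)$ is a covariant representation of $\Sigma$ in $C^*(\Sigma_1)\oplus C^*(\Sigma_2)$ and hence, by the universal property of the crossed product, extends to a $*$-homomorphism $\Psi$.

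Surjectivity is the easy part. Because $A_1$ is clopen, its indicator function lies in $C(X)$, so the image of $\Psi$ contains $C(A_1)\oplus 0$ and $0 \oplus C(A_2)$ as well as $\delta_1 \oplus \delta_2$; multiplying an element of $C(A_1)\oplus 0$ by $\delta_1\oplus\delta_2$ yields $g\,\delta_1^{n} \oplus 0$, and likewise in the second summand. Thus the image contains every generalized polynomial of $C^*(\Sigma_1)\oplus C^*(\Sigma_2)$ and is therefore norm-dense; being the image of a $C^*$-algebra it is closed, hence equal to the whole direct sum.

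For injectivity I would exploit the faithful conditional expectations. Let $E_i : C^*(\Sigma_i) \to C(A_i)$ be the canonical norm-one projections, so that $E_1 \oplus E_2$ is the faithful conditional expectation of the direct sum onto $C(A_1)\oplus C(A_2)$. A direct computation on generalized polynomials shows that $\Psi$ intertwines the expectations, namely $(E_1\oplus E_2)\circ \Psi = \Theta \circ E$, and by density of the algebraic part together with continuity of all maps involved this identity holds on all of $C^*(\Sigma)$. Consequently, if $\Psi(a)=0$ then for every integer $n$
\[ \Theta(a(n)) = \Theta(E(a\delta^{-n})) = (E_1\oplus E_2)\bigl(\Psi(a)\,\Psi(\delta^{-n})\bigr) = 0, \]
whence $a(n)=0$ for all $n$ since $\Theta$ is injective. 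As an element of $C^*(\Sigma)$ is determined by its generalized Fourier coefficients (\cite[Theorem 1.3]{T24}), this forces $a=0$.

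I expect injectivity to be the only real point to watch. The subtlety is that one should pass through the faithful expectation $E$ and the coefficient-uniqueness result rather than attempt to argue on generators directly, since a priori $\Psi$ could collapse relations that are invisible at the level of the generating set. Everything else reduces to bookkeeping afforded by clopenness of the $A_i$, which supplies the separating indicator functions needed for covariance, for the decomposition of $C(X)$, and for surjectivity.
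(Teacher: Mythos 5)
Your proof is correct. The paper actually offers no proof of this lemma at all, remarking only that it ``is easily proved using the definition of a $C^*$-crossed product''; your argument---covariance of the pair $(\Theta,\ \delta\mapsto\delta_1\oplus\delta_2)$ plus the universal property to obtain $\Psi$, clopenness of the $A_i$ for surjectivity, and the intertwining identity $(E_1\oplus E_2)\circ\Psi=\Theta\circ E$ combined with uniqueness of generalized Fourier coefficients for injectivity---is precisely the standard argument the paper is alluding to, with the only genuinely delicate point (injectivity, which cannot be checked on generators alone) handled by the correct mechanism, namely the faithful conditional expectation.
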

We also make the following definition.
\begin{defn}
Let $p$ be a positive integer.
For a subset $A \subseteq C(\mathbb{T})$ we denote by $\diag(A)$ the subset of $C(\mathbb{T}, M_p)$ consisting of diagonal $p \times p$-matrices with entries from $A$. Given $p$ functions $f_0, f_1, \ldots, f_{p-1} \in C(\mathbb{T})$ we denote by $\diag(f_0, f_1, \ldots, f_{p-1})$ the diagonal $p \times p$-matrix having $f_i$ as entry $(i,i)$.
\end{defn}
\begin{pro}\label{isolbana4}
Suppose that $\Sigma = (X, \sigma)$ is such that $X$ contains an orbit of order $p$, $\mathcal{O}_{\sigma} (x) = \{x, \sigma(x), \ldots, \sigma^{p-1} (x)\}$, consisting of isolated points. Then there exist $C^*$-subalgebras $B_i$ of $C^*(\Sigma)$ with $C(X) \subsetneq B_i \subsetneq C(X)'$ for $i =1,2$ such that $B_1$ has the intersection property for ideals, and $B_2$ does not.
\end{pro}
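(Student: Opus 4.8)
The plan is to isolate the given orbit as a direct summand, identify that summand with a matrix algebra, and thereby reduce the whole statement to a concrete construction of two intermediate subalgebras of $C(\mathbb{T}, M_p)$. First I would note that $\mathcal{O}_\sigma(x)$, being a finite set of isolated points that is invariant under $\sigma$ and $\sigma^{-1}$, is a clopen invariant subset of $X$. Setting $A_1 = \mathcal{O}_\sigma(x)$ and $A_2 = X \setminus A_1$, Lemma~\ref{dirsum4} gives $C^*(\Sigma) \cong C^*(\Sigma_1) \oplus C^*(\Sigma_2)$ (if $A_2 = \emptyset$ one simply works inside $C^*(\Sigma_1) = C^*(\Sigma)$). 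Since $\Sigma_1$ is a single orbit of order $p$, Proposition~\ref{enbana4} identifies $C^*(\Sigma_1)$ with $C(\mathbb{T}, M_p)$ via $\tilde\rho_x$, under which $\rho_x(C(A_1))$ becomes the algebra of constant diagonal matrices. A short computation identifies the commutant: either directly, since the commutant of the diagonal masa of $M_p$ is itself, so that a matrix function commutes with all constant diagonal matrices iff it is diagonal at every $z$; or via Proposition~\ref{commbesk4}(i) together with the fact that $\Per^n(\sigma_{\upharpoonright A_1}) = A_1$ exactly when $p \mid n$. Either way $C(A_1)' \cong \diag(C(\mathbb{T}))$.

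Next I would record the behaviour of the intersection property across the direct sum. Because the two units $(1,0)$ and $(0,1)$ are central projections, every ideal of $C^*(\Sigma_1) \oplus C^*(\Sigma_2)$ — closed or not — splits as $J_1 \oplus J_2$. Since $C(A_2)'$ has the intersection property in $C^*(\Sigma_2)$ by Corollary~\ref{intprocol4}, and using Proposition~\ref{sammaskar4} to pass freely between closed and general ideals, one sees that $B := \tilde\rho_x^{-1}(\tilde B) \oplus C(A_2)'$ has the intersection property in $C^*(\Sigma)$ if and only if $\tilde B$ has it in $C(\mathbb{T}, M_p)$; moreover $C(X) \subsetneq B \subsetneq C(X)'$ as soon as the constant diagonal matrices are properly contained in $\tilde B$ which is in turn properly contained in $\diag(C(\mathbb{T}))$. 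Thus the problem is reduced to producing two $C^*$-subalgebras $\tilde B_1, \tilde B_2$ strictly between the constant diagonal matrices and $\diag(C(\mathbb{T}))$, one with and one without the intersection property.

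For the construction I would use that the nonzero closed ideals of $C(\mathbb{T}, M_p)$ are exactly $I_T = \{F : F_{\upharpoonright T} = 0\}$ for proper closed $T \subsetneq \mathbb{T}$, as $M_p$ is simple. Noting that the diagonal matrix units lie in $\rho_x(C(A_1))$ and are central, any subalgebra of $\diag(C(\mathbb{T}))$ built from them is of product form $\prod_{j=0}^{p-1} A_j$ with $\mathbb{C} \subseteq A_j \subseteq C(\mathbb{T})$, whence $\tilde B \cap I_T = \prod_j \{g \in A_j : g_{\upharpoonright T} = 0\}$. I would then set $\tilde B_1 = \{\diag(g_0, \dots, g_{p-1}) : g_j(a) = g_j(b)\text{ for all }j\}$ for two fixed distinct points $a,b \in \mathbb{T}$, and $\tilde B_2 = \{\diag(g_0, \dots, g_{p-1}) : g_j\text{ is constant on }C\text{ for all }j\}$ for a fixed proper closed arc $C$ with nonempty interior. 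For $\tilde B_1$, given any proper closed $T$ one picks $x_0 \in \mathbb{T} \setminus (T \cup \{a,b\})$ (possible since $\mathbb{T} \setminus T$ is infinite) and a function vanishing on $T \cup \{a,b\}$ but not at $x_0$; this yields a nonzero element of $\tilde B_1 \cap I_T$, so $\tilde B_1$ has the intersection property. For $\tilde B_2$, taking $T = \mathbb{T} \setminus C^{0}$ (with $C^{0}$ the interior of $C$), any $g_j$ that is constant on $C$ and vanishes on $T \supseteq \partial C \cup (\mathbb{T}\setminus C)$ must vanish identically, so $\tilde B_2 \cap I_T = \{0\}$ while $I_T \neq \{0\}$, and $\tilde B_2$ fails the property. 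Both are genuinely intermediate, so pulling back by $\tilde\rho_x$ and adjoining $C(A_2)'$ produces the required $B_1$ and $B_2$.

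The main obstacle, and the conceptual heart of the argument, is the pair of structural identifications in the first and third steps: that $C(A_1)' \cong \diag(C(\mathbb{T}))$ and that its intermediate subalgebras are forced into product form by the central diagonal projections coming from $C(A_1)$. Once these are in place the ideal-intersection computation is entirely explicit, and the splitting of ideals across the direct sum reduces the general case to this single-orbit model; the remaining verifications are routine. One could alternatively carry out the final step through the spectral criterion of Proposition~\ref{specint4}, checking how the invariant closed subsets of the spectrum restrict to $\tilde B_1$ and $\tilde B_2$, but the direct computation above seems the most transparent.
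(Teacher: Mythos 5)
Your proposal is correct and follows essentially the same route as the paper's proof: the same clopen-orbit decomposition via Lemma~\ref{dirsum4} and Proposition~\ref{enbana4}, the same identification of the commutant as (complement part)$'\oplus \diag(C(\mathbb{T}))$, and the same two intermediate subalgebras (diagonal entries agreeing at two fixed points for $B_1$, diagonal entries constant on a proper closed set for $B_2$). The only differences are bookkeeping: the paper verifies the intersection property for $B_1$ by applying Corollary~\ref{intprocol4} to the full algebra and multiplying the resulting element by a suitable diagonal matrix, whereas you split ideals across the direct sum and invoke the classification of closed ideals of $C(\mathbb{T},M_p)$ — the two verifications are interchangeable.
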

\begin{proof}
Denote the complement of $\mathcal{O}_{\sigma} (x)$ in $X$ by $X_1$, and let $\sigma_1 = \sigma_{\upharpoonright X_1}$. Using the notation $\Sigma_1 = (X_1, \sigma_1)$ we then have, by Proposition~\ref{enbana4} together with Lemma~\ref{dirsum4}, that $C^* (\Sigma) \cong C^* (\Sigma_1) \oplus C(\mathbb{T},M_p)$, and it is straightforward to check that $C(X)$ is identified with $C(X_1) \oplus \diag(\mathbb{C}) \subseteq C^* (\Sigma_1) \oplus C(\mathbb{T},M_p)$ under this isomorphism. If $X_1$ is empty, the left summands above and in what follows are naturally zero. Since the commutant of $\diag(\mathbb{C})$ in $C(\mathbb{T}, M_p)$ is easily seen to be $\diag(C(\mathbb{T}))$, we conclude that $C(X)'$ is identified with $C(X_1)' \oplus \diag(C(\mathbb{T}))$, where $C(X_1)'$ is the commutant of $C(X_1)$ in $C^* (\Sigma_1)$, under the isomorphism.
Now take two distinct points $x_1, x_2 \in \mathbb{T}$ and consider the $C^*$-subalgebra $B = \{f \in C(\mathbb{T}) : f(x_1) = f(x_2)\}$ of $C(\mathbb{T})$. Let \[B_1 = C(X_1)' \oplus \diag(B) \subseteq C^* (\Sigma_1) \oplus C(\mathbb{T},M_p).\] It is readily checked that $B_1$ is a $C^*$-subalgebra of $C^* (\Sigma_1) \oplus C(\mathbb{T},M_p)$ such that \[C(X_1) \oplus \diag(\mathbb{C}) \subsetneq B_1 \subsetneq C(X_1)' \oplus \diag(C(\mathbb{T})).\] Now let $I \subseteq C^* (\Sigma_1) \oplus C(\mathbb{T},M_p)$ be a non-zero ideal. Then by Corollary~\ref{intprocol4} there exists a non-zero $a \oplus b \in (C(X_1)' \oplus \diag(C(\mathbb{T}))) \cap I$. If $b = 0$ then $a \oplus b \in I \cap B_1$ and we are done, so suppose $b \neq 0$. Writing $ b = \diag(f_0, \ldots, f_{p-1})$ where the $f_i$ are in $C(\mathbb{T})$, we see that at least one of the $f_i$ is such that there exist a $t \in \mathbb{T} \setminus \{x_1, x_2\}$ with $f_i (t) \neq 0$. Take an $f \in C(\mathbb{T})$ such that $f(t) =1$ and $f(x_1) = f(x_2) =0$ and let $M = \diag(f, f, \ldots, f) \in  C(\mathbb{T}, M_p)$. Then $0 \oplus 0 \neq (0 \oplus M) \cdot (a \oplus b) = 0 \oplus M \cdot b\in I \cap B_1$. We conclude that $B_1$ has the intersection property.
To construct an intermediate subalgebra that does not have the intersection property, take two proper closed subsets $C_1, C_2 \subseteq \mathbb{T}$ such that $C_1 \cup C_2 = \mathbb{T}$. Note that connectedness of $\mathbb{T}$ implies that $C_1 \cap C_2 \neq \emptyset$ and that both $C_1$ and $C_2$ are infinite. Now let $B_2 = C(X_1)' 
\oplus \diag(\{f \in C(\mathbb{T}) : f \textup{ is constant on } C_1\})$. It is not hard to see that $B_2$ is a $C^*$-subalgebra of $C^* (\Sigma_1) \oplus C(\mathbb{T},M_p)$ such that \[C(X_1) \oplus \diag(\mathbb{C}) \subsetneq B_2 \subsetneq C(X_1)' \oplus \diag(C(\mathbb{T})).\] Now let $I = \{0\} \oplus M_p(\ker(C_2)) \subseteq C^*(\Sigma_1) \oplus C(\mathbb{T}, M_p)$, where $M_p (\ker(C_2))$ is the set of $p \times p$-matrices with entries in $\ker(C_2)$. This is easily seen to be a non-trivial closed ideal and clearly $I \cap B_2 =\{0 \oplus 0\}$ as $C_1 \cap C_2 \neq \emptyset$.
\end{proof}
We can now easily derive the main result of this section.
\begin{thm}\label{mellankompl4}
For a topological dynamical system $\Sigma = (X, \sigma)$, $C(X)'$ always has the intersection property for ideals in the associated $C^*$-crossed product $C^*(\Sigma)$. 
Furthermore, precisely one of the following two cases occurs:
\begin{enumerate}
\item $\Sigma$ is topologically free. Then $C(X) = C(X)'$;
\item $\Sigma$ is not topologically free. Then $C(X) \subsetneq C(X)'$ and $C(X)$ does not have the intersection property for ideals. In this case, there exist $C^*$-subalgebras $B_i$ with $C(X) \subsetneq B_i \subsetneq C(X)'$ for $i =1,2$ such that $B_1$ has the intersection property for ideals and $B_2$ does not.
\end{enumerate}
\end{thm}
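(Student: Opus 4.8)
The first assertion is exactly Corollary~\ref{intprocol4}, so I would invoke it verbatim. The dichotomy between (i) and (ii) is the tautological split according to whether $\Sigma$ is topologically free, and in case (i) the equality $C(X) = C(X)'$ is precisely the second sentence of Proposition~\ref{commbesk4}(i). Thus all the substance lies in case (ii), which I would treat next.

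Assume henceforth that $\Sigma$ is not topologically free. Proposition~\ref{commbesk4}(i) immediately yields the strict inclusion $C(X) \subsetneq C(X)'$, and the failure of the intersection property for $C(X)$ is then nothing but the contrapositive of the implication (iii)$\Rightarrow$(ii) of Theorem~\ref{triquivcny4} (equivalently Theorem~\ref{triquivc4}), since $C(X)$ is no longer maximal abelian. What remains, and where the real work is, is the construction of the two intermediate subalgebras $B_1$ and $B_2$.

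The plan is to reduce entirely to the three construction results already proved, Propositions~\ref{ejintp4}, \ref{intp4} and \ref{isolbana4}, by means of a short case analysis on a single periodic interior point. By Lemma~\ref{bairegrej4}, non-topological-freeness furnishes a positive integer $n$ with $\Per^n(\sigma)^0 \neq \emptyset$; I fix a point $x_0$ in this nonempty open set. The decisive dichotomy is whether $x_0$ is isolated in $X$. If $x_0$ is \emph{not} isolated, then Proposition~\ref{intp4} directly supplies a subalgebra $B_1$ with the intersection property. To produce $B_2$ in this branch I would exploit that every periodic orbit is \emph{finite}: since $x_0$ is non-isolated, the open neighborhood $\Per^n(\sigma)^0$ of $x_0$ is infinite (as already observed in the proof of Proposition~\ref{intp4}), and being a union of finite orbits it must contain at least two of them, so Proposition~\ref{ejintp4} yields a $B_2$ lacking the intersection property. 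If instead $x_0$ is isolated, then because $\sigma$ is a homeomorphism its entire, necessarily finite, orbit consists of isolated points, whence Proposition~\ref{isolbana4} delivers both $B_1$ and $B_2$ simultaneously. Since these two branches are exhaustive, case (ii) is complete.

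I expect the main obstacle to be purely organizational rather than technical: namely, verifying that the case split is genuinely exhaustive and that \emph{each} branch produces \emph{both} required subalgebras. The one genuine insight underlying this is the finiteness-of-periodic-orbits observation, which is what converts a non-isolated periodic interior point into the hypothesis ``at least two orbits'' of Proposition~\ref{ejintp4}; once that bridge is in place, everything else is bookkeeping layered on top of the cited propositions.
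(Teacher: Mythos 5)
Your proof is correct and follows essentially the same route as the paper: Corollary~\ref{intprocol4} for the first assertion, Theorem~\ref{triquivc4}/Proposition~\ref{commbesk4}(i) for the dichotomy, and a case split on isolatedness of a point of $\Per^n(\sigma)^0$ feeding into Propositions~\ref{ejintp4}, \ref{intp4} and \ref{isolbana4}. You even supply the detail the paper leaves as ``easy to see'' (an infinite open invariant set of points of period dividing $n$ must contain at least two finite orbits); the only nitpick is that the failure of the intersection property for $C(X)$ is the contrapositive of (ii)$\Rightarrow$(iii), not of (iii)$\Rightarrow$(ii), which is immaterial since the theorem is an equivalence.
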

\begin{proof}
That $C(X)'$ always has the intersection property for ideals is stated in Corollary~\ref{intprocol4}.
Case (i) is clear by Theorem~\ref{triquivc4}. If $\Sigma$ is not topologically free, $C(X)$ is properly contained in $C(X)'$ and does not have the intersection property for ideals by Theorem~\ref{triquivc4}. The explicit description of $C(X)'$ in Proposition~\ref{commbesk4}(i)  implies that there exists a positive integer $n$ such that $\Per^n (\sigma)^0 \neq \emptyset$. Suppose first that $\Per^n (\sigma)^0$ contains a non-isolated point. Then it is easy to see that the conditions in Propositions \ref{ejintp4} and \ref{intp4} are both satisfied and case (ii) follows. If $\Per^n (\sigma)^0$ contains only isolated points, the condition in Proposition~\ref{isolbana4} is satisfied and again case (ii) follows.
\end{proof}
\section{Projections onto $C(X)'$}\label{projn4}
With the norm one projection $E : C^* (\Sigma) \rightarrow C(X)$ in mind, one might wonder whether there exists a projection map of norm one onto $C(X)'$. We have the following result.
\begin{thm}
A necessary and sufficient condition for the existence of a
 projection of norm one from $C^* (\Sigma)$ onto $C(X)'$ is that for every positive integer
 $k$ the set $\Per_k(\sigma)^0$ is closed.
In this case the projection is uniquely determined and it is faithful.
\end{thm}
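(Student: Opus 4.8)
The plan is to identify the only possible candidate for such a projection, show that it exists precisely under the stated condition, and read off uniqueness and faithfulness along the way. The natural guess is the map $P$ that truncates each generalized Fourier coefficient to the largest region permitted by Proposition~\ref{commbesk4}(i), namely
\[
P(a) = \sum_n \chi_{\Per^n(\sigma)^0}\, a(n)\, \delta^n ,
\]
where $\chi_S$ denotes the characteristic function of $S$. For this to make sense inside $C^*(\Sigma)$ one needs each $\chi_{\Per^n(\sigma)^0}$ to lie in $C(X)$, i.e. each $\Per^n(\sigma)^0$ to be clopen; since these sets are always open, this is exactly the requirement that they be closed. So I would prove: first, that if a norm one projection exists it must be given by this formula, which forces each $\Per^n(\sigma)^0$ to be clopen; secondly, that when they are clopen this $P$ really is a norm one projection; and finally that $P$ is then automatically faithful.

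For necessity and uniqueness I would invoke Tomiyama's theorem, that a norm one projection onto a $C^*$-subalgebra is a conditional expectation, hence a $C(X)$-bimodule map. Writing $b = P(\delta^n) \in C(X)'$, the bimodule identities give on one hand $f\,b = P(f\delta^n) = f\delta^n$ whenever $\supp(f)\subseteq\Per^n(\sigma)$ (since then $f\delta^n\in C(X)'$), forcing $b(n)=1$ on $\Per^n(\sigma)^0$ and $b(m)=0$ there for $m\neq n$; and on the other hand $\alpha^n(f)\,b = b\,f$ for all $f\in C(X)$, which via the coefficient formulas of Proposition~\ref{commbesk4} squeezes $\supp(b(m))\subseteq \Per^m(\sigma)\cap\Per^{m-n}(\sigma)=\Per^{\gcd(m,n)}(\sigma)$, so that each $b(m)$ vanishes outside $\Per^{\gcd(m,n)}(\sigma)^0\subseteq\Per^n(\sigma)^0$. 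Combining the two gives $b(m)=0$ for $m\neq n$ and $b = \chi_{\Per^n(\sigma)^0}\delta^n$; continuity of this coefficient is precisely the assertion that $\Per^n(\sigma)^0$ is closed. This simultaneously yields necessity and shows that $P$ is uniquely determined on generalized polynomials, hence on $C^*(\Sigma)$.

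The core of sufficiency is to verify that the candidate $P$ is completely positive and contractive, and this is the step I expect to be the main obstacle. The key observation is that, being open, closed and $\sigma$-invariant, each $\Per^n(\sigma)^0$ yields a \emph{central} projection $p_n=\chi_{\Per^n(\sigma)^0}$ of $C^*(\Sigma)$. I would set $D_k = \Per^k(\sigma)^0\setminus\bigcup_{j\mid k,\,j<k}\Per^j(\sigma)^0$ and $q_k=\chi_{D_k}$, check that the $D_k$ are pairwise disjoint clopen $\sigma$-invariant sets with $\Per^n(\sigma)^0 = \bigsqcup_{k\mid n} D_k$, so that $p_n = \sum_{k\mid n} q_k$ is a \emph{finite} sum of orthogonal central projections (which reconfirms its continuity). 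On the central summand cut out by $q_k$ the dynamics satisfies $\sigma^k=\id$, and there $P$ coincides with the root-of-unity gauge average $\tfrac1k\sum_{\zeta^k=1}\gamma_\zeta$, where $\gamma_\zeta$ is the gauge automorphism fixing $C(X)$ and sending $\delta\mapsto\zeta\delta$; this average is a unital completely positive idempotent. Given a generalized polynomial with frequencies bounded by $N$, the finite central projection $Q=\sum_{k\le N} q_k$ then splits $P$ into these finitely many averaging maps on $Q$ and the map $E$ on $1-Q$; since $Q$ is central this yields $\|P(a)\|\le\|a\|$ and complete positivity on the algebraic part, and Proposition~\ref{cesar4} lets me extend $P$ by continuity to all of $C^*(\Sigma)$. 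That the image is exactly $C(X)'$ and that $P$ is idempotent are read off from Proposition~\ref{commbesk4}(i).

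Faithfulness is then immediate: the formula gives $P(a)(0)=a(0)$, i.e. $E\circ P = E$, so if $P(a^*a)=0$ then $E(a^*a)=0$ and faithfulness of $E$ forces $a=0$. The only genuinely delicate point is the contractivity, for which the decisive idea is the reduction to orthogonal central clopen summands carrying honest finite gauge averages.
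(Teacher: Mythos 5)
The paper never actually proves this theorem: it declares the general proof too long, omits it, and only treats the special case $X=\Per_q(\sigma)$, so your proposal can only be measured against the statement itself and that fragment. Measured that way, there is one genuine problem, and it lies not in your arguments but in what they prove. Every step of your proof concerns the sets $\Per^n(\sigma)^0$ (interiors of the fixed-point sets of $\sigma^n$), whereas the stated condition concerns $\Per_k(\sigma)^0$ (interiors of the sets of points of \emph{exact} period $k$), and you silently identify the two. They are not equivalent. In the paper's own example from Section~\ref{notprel4} ($X=[0,1]\times[-1,1]$, $\sigma$ the reflection in the $x$-axis), every $\Per^n(\sigma)^0$ is either $\emptyset$ or $X$, hence clopen, so your condition holds; but $\Per_2(\sigma)^0=X\setminus([0,1]\times\{0\})$ is open, proper and dense, hence not closed, so the stated condition fails. (The stated condition does imply yours: by Lemma~\ref{untat4}, $\bigcup_{j\mid k}\Per_j(\sigma)^0$ is dense in $\Per^k(\sigma)^0$, so if all $\Per_j(\sigma)^0$ are closed this finite union of closed sets equals $\Per^k(\sigma)^0$; the reflection example shows the converse fails.) Consequently your proof and the printed theorem cannot both be correct, and your own construction decides the issue: in the reflection example the map $P=\frac{1}{2}(\id+\gamma)$, where $\gamma$ is the gauge automorphism of $C^*(\Sigma)$ fixing $C(X)$ and sending $\delta$ to $-\delta$, is a faithful norm-one projection onto $C(X)'$ (which there is the closed span of the elements $f\delta^{2m}$), even though the printed condition fails; and your necessity argument---Tomiyama's theorem plus the coefficient computation forcing the $n$-th generalized Fourier coefficient of the image of $\delta^n$ to equal the characteristic function of $\Per^n(\sigma)^0$, which must then be continuous---is airtight. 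So the printed subscript ought to be a superscript; but as a blind proof of the statement as quoted, the unflagged substitution of $\Per^n(\sigma)$ for $\Per_n(\sigma)$ is a real gap, and you should have detected and addressed the discrepancy rather than eliding it.

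Apart from this, your strategy is sound and its steps check out: the bimodule identities do force $Q(\delta^n)=\chi_{\Per^n(\sigma)^0}\delta^n$ for any norm-one projection $Q$, giving necessity and uniqueness at once; under your clopenness hypothesis the sets $D_k$ are pairwise disjoint, clopen, invariant, with $\Per^n(\sigma)^0=\bigcup_{k\mid n}D_k$; on each central summand $q_kC^*(\Sigma)$ your $P$ is the average of the gauge automorphisms over the $k$-th roots of unity while on $1-Q$ it acts as $E$, which yields contractivity on generalized polynomials; Proposition~\ref{cesar4} and Theorem~\ref{comcom4}(i) then give the extension, the identification of the image with $C(X)'$, and idempotence, and $E\circ P=E$ gives faithfulness. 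Compared with the one proof the paper does supply: in the case $X=\Per_q(\sigma)$ your $P$ coincides with the paper's $E_0$, but the paper proves contractivity by evaluating against the total family $\Phi$ and invoking the description of the spectrum of $C(X)'$ in Theorem~\ref{comcom4}(ii), and proves uniqueness from uniqueness of pure state extensions (Proposition~\ref{unutv4}) rather than from Tomiyama's conditional-expectation theorem. Your route via central clopen decomposition and finite gauge averaging is more self-contained and, unlike the paper's special-case argument, handles the general situation.
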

As we feel that the proof of this result is rather long in relation to its relevance, we omit it here and content ourselves with a proof in the special case when $\Sigma = (X, \sigma)$ is such that $\Sigma = \Per_q (\sigma)$ for some positive integer $q$.
\begin{pro}
Suppose $\Sigma = (X, \sigma)$ is such that $\Sigma = \Per_q (\sigma)$ for some positive integer $q$. Then there exists a unique projection $E_0$ of norm one from $ C^* (\Sigma)$ onto $C(X)'$. Furthermore, $E_0$ is faithful.
\end{pro}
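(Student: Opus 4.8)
The plan is to first make the description of $C(X)'$ completely explicit, then construct $E_0$ by averaging, then deduce faithfulness from that of $E$, and finally prove uniqueness via Tomiyama's theorem. Since $X = \Per_q(\sigma)$, every point has least period exactly $q$, so $\sigma^q = \id$ and consequently $\Per^n(\sigma) = X$ whenever $q \mid n$ while $\Per^n(\sigma) = \emptyset$ whenever $q \nmid n$. Feeding this into Proposition~\ref{commbesk4}(i), I obtain $C(X)' = \{a \in C^*(\Sigma) : a(n) = 0 \textup{ whenever } q \nmid n\}$, which by Proposition~\ref{cesar4} is the closed linear span of $\{f\delta^{mq} : f \in C(X),\, m \in \mathbb{Z}\}$. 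This explicit form is what the remaining steps will exploit.

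For existence, I would define $E_0$ by averaging the gauge (dual) action of $\mathbb{T} = \widehat{\mathbb{Z}}$ over the finite subgroup $\mu_q = \{z \in \mathbb{T} : z^q = 1\}$. For $z \in \mathbb{T}$ let $\gamma_z$ be the $*$-automorphism of $C^*(\Sigma)$ determined on generalized polynomials by $\gamma_z(\sum_n f_n \delta^n) = \sum_n z^n f_n \delta^n$, and set $E_0 = \frac{1}{q}\sum_{z \in \mu_q} \gamma_z$. Each $\gamma_z$ is isometric, so $\|E_0\| \leq 1$, and a direct computation of Fourier coefficients gives $E_0(a)(n) = a(n)$ when $q \mid n$ and $E_0(a)(n) = 0$ otherwise. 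Hence $E_0$ maps into $C(X)'$, fixes it pointwise, and satisfies $E_0(1) = 1$, so it is a norm-one projection onto $C(X)'$. Faithfulness then follows quickly: comparing zeroth coefficients shows $E = E \circ E_0$ (both send $a$ to $a(0)$), so if $a \geq 0$ and $E_0(a) = 0$ then $E(a) = 0$, and faithfulness of $E$ forces $a = 0$.

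The crux is uniqueness, and here the plan is as follows. Let $P$ be any norm-one projection onto $C(X)'$. By Tomiyama's theorem $P$ is a conditional expectation, in particular a $C(X)'$-bimodule map; since $C(X) \subseteq C(X)'$, for all $f, g \in C(X)$ I get $g\,P(f\delta^n) = P(gf\delta^n)$ and $P(f\delta^n)\,g = P(f\delta^n g) = P(f\,(g\circ\sigma^{-n})\,\delta^n)$. Because $P(f\delta^n) \in C(X)'$ commutes with $g$, subtracting these identities yields $P(f(g - g\circ\sigma^{-n})\delta^n) = 0$ for all $f,g \in C(X)$. When $q \nmid n$ the set $\Per^n(\sigma)$ is empty, so $\sigma^{-n}(x) \neq x$ for every $x$; thus the functions $f(g - g\circ\sigma^{-n})$ have no common zero, and their closed linear span is a closed ideal of $C(X)$ with empty hull, hence all of $C(X)$. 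Since $h \mapsto P(h\delta^n)$ is bounded, this forces $P(h\delta^n) = 0 = E_0(h\delta^n)$ for every $h \in C(X)$ and every $n$ with $q \nmid n$, while both $P$ and $E_0$ fix each $f\delta^{mq} \in C(X)'$. As $P$ and $E_0$ then agree on all generalized polynomials, which are dense, I conclude $P = E_0$.

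I expect the uniqueness step to be the main obstacle, and within it the delicate points are the appeal to Tomiyama's theorem (to upgrade a bare norm-one idempotent to a bimodule map) and the observation that emptiness of $\Per^n(\sigma)$ for $q \nmid n$ is exactly what makes the ideal generated by $\{f(g - g\circ\sigma^{-n})\}$ all of $C(X)$. Existence and faithfulness, by contrast, reduce to routine coefficient computations once the averaging map $E_0$ is in place.
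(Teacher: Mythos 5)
Your proof is correct, but it takes a genuinely different route from the paper's at all three stages. For existence, the paper never invokes the dual action: it defines $E_0$ directly on the algebraic part as the truncation $\sum_k f_k\delta^k \mapsto \sum_{q\mid k} f_k\delta^k$ and proves contractivity by computing $\varphi_{y,t}\circ E_0 = \varphi_{y,t}$ on generalized polynomials and then invoking Theorem~\ref{comcom4}(ii) to express $\|E_0(a)\|$ as $\sup_{\varphi\in\Phi}|\varphi(a)|\leq\|a\|$; your averaging of the gauge automorphisms over the $q$-th roots of unity gets the norm bound for free and is cleaner, at the cost of assuming the (standard) existence of the dual action on $C^*(\Sigma)$. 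For faithfulness, the paper argues via totality of $\Phi$ (Lemma~\ref{total4}), whereas your observation that $E = E\circ E_0$ reduces everything to faithfulness of $E$ -- this is more elementary and entirely adequate. The sharpest divergence is uniqueness: the paper stays inside its own machinery, noting that $\Gamma = \Per_q(\tilde{\sigma}) = \PIP(\tilde{\sigma})$, so by Proposition~\ref{unutv4} each character $\gamma(y,t)$ of $C(X)'$ has a unique pure state extension, hence a unique state extension, forcing $\gamma(y,t)\circ E_1 = \varphi_{y,t}$ for any norm-one projection $E_1$ and hence $E_1 = E_0$ since characters separate points of the commutative algebra $C(X)'$. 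You instead import Tomiyama's classical theorem that a norm-one projection is a conditional expectation, and use the bimodule identity together with the emptiness of $\Per^n(\sigma)$ for $q\nmid n$ to kill $P(h\delta^n)$; this is a legitimate and self-contained argument (your ideal-with-empty-hull step and the density argument both check out), but it relies on an external structural theorem the paper avoids, while the paper's state-extension technique is precisely the one that scales to the general criterion (closedness of the sets $\Per_k(\sigma)^0$) stated in the theorem whose proof the authors omit.
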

\begin{proof}
Here $\PIP(\sigma) = \Per_q (\sigma) = X$.
It follows from Proposition~\ref{commbesk4}(i) that \[C(X)' = \{a \in C^*(\Sigma) : a(j) = 0 \textup{ if } q \textup{ does not divide }j\}.\]
We define a linear map $E_0$ from the algebraic part of $C^* (\Sigma)$ to $C(X)'$ by \[E_0 (\sum_{k=-n}^n f_k \delta^k) = \sum_{\{l : |lq| \leq n\}} f_{lq} \delta^{lq} \textup{ for } f_k \in C(X) \textup{ and } n \textup{ a non-negative integer}.\] Note that the image of $E_0$ is dense in $C(X)'$ by Proposition~\ref{commbesk4}(i) and Proposition~\ref{cesar4}. We will show that $E_0$ is norm-decreasing and then extend it by continuity to the whole of $C^*(\Sigma)$. First we show that for $\varphi \in \Phi$ and any $a$ in the algebraic part of $C^*(\Sigma)$, we have $\varphi (a) = \varphi \circ E_0 (a)$. Write $a = \sum_{k=-n}^n f_k \delta^k$ and take a  $\varphi_{y,t}$ with $y \in \Per_q (\sigma)$ and $t \in \mathbb{T}$. Denote by $\tilde{\pi}_{y,t}$ its  associated irreducible GNS-representation with cyclic unit vector $\xi_{y,t}$ and note that 
\[\varphi_{y,t} (a) = (\tilde{\pi}_{y,t} (a) \xi_{y,t}, \xi_{y,t}) = \sum_{k=-n}^n (\tilde{\pi}_{y,t}(f_k \delta^k) \xi_{y,t}, \xi_{y,t})  
= \sum_{\{l : |lq| \leq n\}} f_{lq} (y) t^l.\]
On the other hand
\[\varphi_{y,t} \circ E_0 (a) = \sum_{\{l : |lq| \leq n\}} (\tilde{\pi}_{y,t}(f_{lq} \delta^{lq}) \xi_{y,t}, \xi_{y,t}) = \sum_{\{l : |lq| \leq n\}} f_{lq} (y) t^l.\]
Now as $E_0 (a) \in C(X)'$, it follows by Theorem~\ref{comcom4}(ii) and the above conclusion that \[\|E_0 (a)\| = \sup_{\varphi \in \Phi} |\varphi \circ E_0 (a)| = \sup_{\varphi \in \Phi} |\varphi (a)| \leq \|a\|.\]
So indeed $E_0$ is norm decreasing on algebraic elements $a$ and thus extends by continuity to a norm one projection from $C^* (\Sigma)$ onto $C(X)'$. To see that $E_0$ is faithful, let $b \in C^* (\Sigma)$ be a non-negative element and suppose $E_0 (b) = 0$. By definition, $\gamma(y,t) \circ E_0 = \varphi_{y,t} \circ E_0$ and by the above $\varphi_{y,t} \circ E_0 = \varphi_{y,t}$, so $\varphi_{y,t}(b) = 0$. Since every element of $\Phi$ has this form and $\Phi$ is total by Lemma~\ref{total4}, it follows that $b = 0$ and hence $E_0$ is faithful. Suppose there is another norm one projection $E_1 : C^* (\Sigma) \rightarrow C(X)'$. Since $X = \Per_q(\sigma)$ it follows easily that the system $\tilde{\Sigma} = (\Gamma, \tilde{\sigma})$ as introduced in the passage following Lemma~\ref{vilkafinns4} above is such that $\Gamma = \Per_q (\tilde{\sigma}) = \PIP(\tilde{\sigma})$ and hence by Proposition~\ref{unutv4} every element of $\Gamma$ has a unique pure state extension to $C^* (\Sigma)$. Letting $\gamma(y,t) \in \Gamma$ be arbitrary, we see that $\gamma(y,t) \circ E_1$ is a state extension of it and since the unique pure state extension of $\gamma(y,t)$ must also be the unique state extension of it, it follows that $\gamma(y,t) \circ E_1 = \gamma(y,t) \circ E_0 = \varphi_{y,t}$. Since $\gamma(y,t)$ was arbitrary it follows that $E_1 = E_0$ as desired.
\end{proof}
\subsection*{Acknowledgements}
The authors are grateful to Marcel de Jeu, Takeshi Katsura and Sergei Silvestrov for fruitful discussions and useful suggestions for improvements of the paper, and to the referee for giving detailed suggestions concerning the exposition of the material. This work was supported by a visitor's grant of the Netherlands Organisation for Scientific Research (NWO) and The Swedish Foundation for International Cooperation in Research and Higher Education (STINT).

\end{document}